\documentclass[11pt,letterpaper]{article}
\usepackage{amsfonts, amsmath, amssymb, amscd, amsthm, graphicx, mathrsfs, wasysym, setspace, mdwlist, color}
\usepackage{hyperref}
\hoffset -1.45cm \voffset -1.8cm \textwidth=6.2in \textheight=8.3in
\tolerance=9000 \emergencystretch=5pt \vfuzz=2pt
\parskip=1.2mm

\makeatletter
\def\blfootnote{\xdef\@thefnmark{}\@footnotetext}
\makeatother


\newtheorem{thm}{Theorem}[section]
\newtheorem{cor}[thm]{Corollary}
\newtheorem{lem}[thm]{Lemma}
\newtheorem{prop}[thm]{Proposition}

\theoremstyle{definition}
\newtheorem{defn}[thm]{Definition}
\newtheorem{ex}[thm]{Example}
\theoremstyle{remark}
\newtheorem{rem}[thm]{Remark}

\newcommand{\G }{\Gamma (G, X\sqcup \mathcal H)}
\newcommand{\dxh }{{\rm d}_{X\cup\mathcal H}}

\newcommand{\dl }{\widehat{\rm d}_{\lambda}}
\newcommand{\Ker }{{\rm Ker\, }}
\newcommand{\e }{\varepsilon }
\renewcommand{\kappa }{\varkappa}

\newcommand{\Hl }{\{ H_\lambda \} _{\lambda \in \Lambda } }

\renewcommand{\d }{{\rm d} }
\newcommand{\im}{{\rm Im\, }}

\newcommand{\he }{hyperbolically embedded }
\newcommand{\Lab }{{\bf Lab}}
\newcommand{\h}{\hookrightarrow _{h}}

\newcommand{\QZ}{{QZ^1}}
\newcommand{\QZas}{{QZ^1_{as}}}

\begin{document}

\title{Induced quasi-cocycles on groups with hyperbolically embedded subgroups}
\author{M. Hull, D. Osin\thanks{The research was supported by the NSF grant DMS-1006345. The second author was also supported by the RFBR grant 11-01-00945.
}}
\date{}
\maketitle

\begin{abstract}
Let $G$ be a group, $H$ a hyperbolically embedded subgroup of $G$, $V$ a normed $G$-module, $U$ an $H$-invariant submodule of $V$. We propose a general construction which allows to extend $1$-quasi-cocycles on $H$ with values in $U$ to $1$-quasi-cocycles on $G$ with values in $V$. As an application, we show that every group $G$ with a non-degenerate hyperbolically embedded subgroup has $\dim H_b^2(G,\ell^p(G))=\infty $ for $p\in [1, +\infty)$. This covers many previously known results in a uniform way. Applying our extension to quasimorphisms and using Bavard duality, we also show that hyperbolically embedded subgroups are undistorted with respect to the stable commutator length.
\end{abstract}


\section{Introduction}

Let $\mathbb F$ be a subfield of $\mathbb C$. All modules in this paper are left, all vector spaces are over $\mathbb F$. For a discrete group $G$, by a \emph{normed $G$-module} we mean a normed vector space $V$ endowed with a (left) action of the group $G$ by isometries. Given a subgroup $H\le G$, by an $H$-submodule of a $G$-module $V$ we mean any $H$-invariant subspace of $V$ with the induced action of $H$.

Let $V$ be a normed $G$-module. Recall that a map $q\colon G\to V$ is called a \emph{1-quasi-cocycle} if there exists a constant $\e>0$ such that for every $f,g\in G$ we have $$\| q(fg)-q(f)-fq(g)\|\le \e .$$  The $\mathbb F$-vector space of all $1$-quasi-cocycles on $G$ with values in $V$ is denoted by $\QZ (G,V)$.

The study of $1$-quasi-cocycles is partially motivated by the fact that the kernel of the comparison map  $H_b^2(G,V)\to H^2(G,V)$ from the second bounded cohomology to the ordinary second cohomology with coefficients in $V$ can be identified with the quotient  $\QZ(G,V)/(\ell^{\infty}(G,V)+Z^1(G,V))$, where $\ell^{\infty}(G,V)$ and $Z^1(G,V)$ are the subspaces of uniformly bounded maps and cocycles, respectively. In the last decade, techniques based on 1-quasi-cocycles and bounded cohomology have led to new breakthroughs in the study of rigidity of group von Neumann algebras, measure equivalence and orbit equivalence of groups, and low dimensional topology (see \cite{DC,CS,Mon07,Pop} and references therein).

The main goal of this paper is to address the following ``extension problem": \emph{Under what conditions can a 1-quasi-cocycle on a subgroup $H\le G$ be extended to the whole group $G$?}

Below we describe few known results in this direction.

\begin{ex}[Counting quasimorphisms]\label{ex1}
If $V=\mathbb R$ with the trivial action of $G$, 1-quasi-cocycles on $G$ with values in $V$ are called \emph{quasimorphisms}. The classical examples are \emph{counting quasimorphisms} of free groups introduced by Brooks \cite{Bro}. Let $F$ be a free group with a basis $S$ and let $w$ be a reduced word in $S\cup S^{-1}$. Given an element $f\in F$, denote by $c_w(g)$ the number of disjoint copies of $w$ in the reduced representative of $g$. Then $h_w=c_w-c_{w^{-1}}$ defines a quasimorphism $F\to \mathbb R$ \cite{Bro}. Observe that $h_w(g)$ extends the obvious cocycle (i.e., homomorphism) $H\to \mathbb R$ of the cyclic subgroup $H=\langle w\rangle \le F$ that sends $w^n$ to $n$ for all $n\in \mathbb Z$.

This construction was further developed by Epstein and Fujiwara \cite{EF} and later by Bestwina and Fujiwara \cite{BF}, who generalized it to the cases of hyperbolic groups and groups acting weakly properly discontinuous on hyperbolic spaces, respectively.
\end{ex}

Recall that a 1-quasi-cocycle $q\in \QZ(G,V)$ is called \emph{anti-symmetric} if
$$
q(g^{-1})=-g^{-1}q(g)
$$
for every $g\in G$. The next example is essentially due to Thom (cf. \cite[Lemma 5.1]{Thom}).

\begin{ex}[Extending anti-symmetric 1-quasi-cocycles to free products]\label{ex2}
Let $G=H_1\ast H_2$, let $V$ be a normed $G$-module, and let $U_i$ be an $H_i$-submodule of $G$, $i=1,2$. Then any anti-symmetric 1-quasi-cocycles $q_i\in \QZ (H_i, U_i)$, $i=1,2$, can be naturally extended to a 1-quasi-cocycle $G\to V$ using the normal form of elements of free products. That is, suppose that $g\in G$ has the normal form
$$
g=h_1k_1\cdots h_nk_n,
$$
where $h_i\in H_1$, $k_i\in H_2$ for $i=1, \ldots, n$, and $k_1,h_2, \ldots , k_{n-1},h_n$ are non-trivial. Let
$$
q(g)= q_1(h_1) + h_1 q_2(k_1) + h_1k_1q_1(h_2)+\cdots + h_1k_1\cdots  h_n q_2(k_n).
$$
Checking that $q\in \QZ(G,V)$ is easy. It is essential here that $q_1$ and $q_2$ are anti-symmetric (see Remark \ref{as-nec}).
\end{ex}

\begin{ex}[No general extension construction exists]
It is well-known and easy to prove that every quasimorphism on an amenable group decomposes as a sum of a homomorphism and a bounded map \cite{DC}. This easily implies that if $G$ is amenable and $H=\langle h\rangle \le [G,G]$ is an infinite cyclic subgroup, then the natural homomorphism $H\to \mathbb R$ defined by $h^n\mapsto n$ does not extend to any quasimorphism of $G$.
\end{ex}

In this paper we prove an extension theorem which can be thought of as a generalization of Examples \ref{ex1} and \ref{ex2}. In fact, our construction is similar to Example \ref{ex2}, but the proof is much more involved. We state here a simplified version of our main result and refer to Theorem \ref{real} for the full generality. For a group $G$ and a normed $G$-module $V$, let $\QZas(G,V)$ denote the subspace of all anti-symmetric 1-quasi-cocycles on $G$ with coefficients in $V$.

\begin{thm}\label{ind}
Let $G$ be a group, $H$ a hyperbolically embedded subgroup of $G$, $V$ a normed $G$-module, $U$ an $H$-submodule of $V$.  Then there exists a linear map $$\iota \colon \QZas (H, U)\to \QZas (G,V)$$ such that for any $q\in \QZas(H,U)$, we have $\iota (q)|_H\equiv q.$
\end{thm}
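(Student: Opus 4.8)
The key idea: a hyperbolically embedded subgroup $H \hookrightarrow_h G$ means there's a generating set $X$ of $G$ relative to $H$ such that the relative Cayley graph $\Gamma(G, X \sqcup \mathcal{H})$ is hyperbolic, AND the "relative metric" $\widehat{d}$ on $H$ induced by paths avoiding $H$-edges is locally finite (proper). This gives us geodesics in $\Gamma(G, X \sqcup \mathcal{H})$ and a notion of "$H$-components" of geodesics.

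The construction mimics Example 2. Given $g \in G$, take a geodesic in $\Gamma(G, X \sqcup \mathcal{H})$ from $1$ to $g$. This path has some edges labeled by elements of $H$ (the "$H$-components"). At each such component, we're at some vertex $a$, traverse an element $h \in H$, landing at $ah$. We'd define $\iota(q)(g) = \sum (\text{prefix}) \cdot q(h)$ summed over $H$-components, where prefix is the group element reached. The issue: geodesics aren't unique. So we need to show this is well-defined up to bounded error — that different geodesics give the same answer up to a uniformly bounded quantity. Then $\iota(q)$ is defined as this "approximate" value, or more precisely we fix a choice (perhaps via a lemma extracting a well-defined value from the quasi-cocycle relation).

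**Key steps:**

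\begin{enumerate}
\item \textbf{Set up geometry.} Recall the definition of \he subgroup: fix $X$ with $\Gamma(G, X\sqcup \mathcal{H})$ hyperbolic and $\widehat{d}$ locally finite on $H$. Recall the notion of $H$-components of paths, isolated components, and the key lemma (from Dahmani--Guirardel--Osin or similar) that in a geodesic (or quasigeodesic) polygon, components that are isolated have bounded $\widehat{d}$-length, and consecutive geodesics' components "match up."

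\item \textbf{Define a provisional value along a path.} For a path $p$ from $1$ to $g$ in the relative Cayley graph, decompose into maximal $H$-components and "non-$H$" pieces. If the $i$-th $H$-component starts at group element $a_i$ and is labeled $h_i \in H$, set $q_p(g) = \sum_i a_i q(h_i)$. Actually, one must be careful: non-component $H$-edges (single $H$-edges that aren't "components" — but every $H$-edge lies in some component) and $X$-edges contribute nothing.

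\item \textbf{Well-definedness up to bounded error.} Show $\|q_p(g) - q_{p'}(g)\| \le \e'$ for any two geodesics $p, p'$ from $1$ to $g$, where $\e'$ depends only on $q$, $H$, $G$ (not $g$). This uses: (a) the geodesic bigon $p \cup \bar{p}'$ is thin; (b) isolated components on it are $\widehat{d}$-bounded, so $q$ evaluated on them is bounded (using local finiteness of $\widehat{d}$ and that $q$ is bounded on $\widehat{d}$-balls — because $q$ is a quasi-cocycle, $\|q(h)\| \le \|q\|_{\text{defect}} \cdot |h|_{\widehat{d}} + \text{const}$... wait, we need $q$ bounded on $\widehat{d}$-bounded sets, which follows since $\widehat{d}$-balls are finite); (c) non-isolated components on $p$ are "connected" to components on $p'$, and connected components differ by elements of $H$ of bounded $\widehat{d}$-length, so using anti-symmetry and the quasi-cocycle relation on $H$, the contributions nearly cancel. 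This is where anti-symmetry is essential (cf. Remark on Example 2): matching a component $h$ on $p$ with component $h^{-1}$-ish on $\bar p'$ requires $q(h^{-1}) = -h^{-1}q(h)$ to get cancellation.

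\item \textbf{Define $\iota(q)$.} Either pick geodesics via a fixed choice function and set $\iota(q)(g) = q_{p_g}(g)$, or better, because different choices differ boundedly, the map $g \mapsto q_{p_g}(g)$ is already a quasi-cocycle regardless of the choice. Linearity in $q$ is clear from the formula. Then check $\iota(q) \in \QZ(G,V)$: given $f, g$, concatenate a geodesic for $f$, the $f$-translate of a geodesic for $g$, and compare with a geodesic for $fg$ — this is a geodesic triangle, apply step 3's technique.

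\item \textbf{Anti-symmetry and restriction.} For $h \in H$, the single $H$-edge from $1$ to $h$ is itself a geodesic (assuming $|h|_{X\cup\mathcal H} = 1$, i.e., $h \ne 1$; handle $h=1$ trivially), with one $H$-component $h$ starting at $1$, so $q_p(h) = q(h)$; combined with bounded-error well-definedness, $\iota(q)(h) = q(h)$ up to bounded error — but actually we want it exactly equal, so we may need to adjust $\iota(q)$ by subtracting a bounded correction supported on $H$, or define $\iota(q)$ carefully so that on $H$ it's exactly $q$. Anti-symmetry of $\iota(q)$: run the same path backwards; the $H$-components of the reverse path are the inverses, and anti-symmetry of $q$ propagates.
\end{enumerate}

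**Main obstacle.** The heart of the difficulty is step 3: controlling how the sum $\sum a_i q(h_i)$ changes when we change the geodesic. The combinatorics of how $H$-components on two different geodesics interact — some are isolated (bounded), some are "synchronized" across the two paths but possibly broken into several pieces each — requires the fine structure theory of hyperbolically embedded subgroups (separation of components, the "$\widehat d$-metric is proper" hypothesis ensuring $q$ is bounded on the relevant component sets). Getting the telescoping cancellation to work, and tracking that all error terms are bounded by constants independent of $g$, is the technical crux — exactly why, as the authors note, "the proof is much more involved" than Example 2. I expect this to occupy the bulk of the paper.
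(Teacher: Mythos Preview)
Your overall shape is right---sum $q$-values over $H$-components of geodesics, weighted by the prefix group element, and use anti-symmetry to get cancellation---and you correctly locate the difficulty in step 3. But there is a genuine gap in the mechanism you describe for bounding the error.

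You write that isolated components have bounded $\widehat d$-length, so $q$ on them is bounded, and that connected components on $p$ and $p'$ differ by $\widehat d$-short elements, so their contributions ``nearly cancel'' via the quasi-cocycle relation. Both statements are true \emph{per component}. The problem is that the number of $H$-components along a geodesic is of the order of $\dxh(1,g)$, hence unbounded. A bounded error per component, summed over unboundedly many components, gives no uniform bound. The same issue recurs in step 4: in the triangle, each matched pair of components on $p$ and on $p_1\cup fp_2$ contributes a bounded discrepancy, but there are arbitrarily many such pairs. So the argument as sketched does not close.

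The paper repairs this with two ideas you are missing. First, it does not sum over all $H$-components: it introduces \emph{separating cosets} (Definition~\ref{sepcosdef}), namely cosets that some geodesic penetrates with $\widehat d$-length exceeding a fixed threshold $3C$. Lemma~\ref{sep} shows every geodesic (indeed every $2$-geodesic path) must penetrate each separating coset, so this set $S_\lambda(f,g)$ is canonical, independent of geodesic choice. Second, within each separating coset the paper \emph{averages} $r_\lambda(u,v)=u\,q(u^{-1}v)$ over the finite set $E(f,g;xH_\lambda)$ of entrance--exit pairs realized by all geodesics, producing a value $R_{av}(f,g;xH_\lambda)$ that is again canonical. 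The payoff is Lemma~\ref{fgh}: in a triangle $(f,g,h)$, the separating cosets of $(f,g)$ decompose as $S'\sqcup S''\sqcup F$ with $|F|\le 2$, and for every $xH_\lambda\in S'$ one has $E(f,g;xH_\lambda)=E(f,h;xH_\lambda)$ \emph{exactly as sets}. Hence the averaged contributions cancel \emph{exactly}, not just approximately, for all but the at most two cosets in $F$. This is what converts ``bounded error per component'' into ``bounded error, period.'' Your proposal has no analogue of this exact-matching mechanism, and without it the accumulated error is not controlled.
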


It is well-known and easy to prove that every 1-quasi-cocycle is anti-symmetric up to a bounded perturbation (see Lemma \ref{antisym}). In the notation of Theorem \ref{ind}, this gives the following.

\begin{cor}\label{ind1}
There exists a linear map $\kappa\colon \QZ (H, U)\to \QZ (G,V)$ such that for any $q\in \QZ(H,U)$, $\kappa (q)|_H\in \QZ(H,U)$  and $$\sup_{h\in H} \| \kappa (q) (h)-q(h)\| <\infty .$$
\end{cor}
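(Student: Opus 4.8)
The plan is to derive Corollary \ref{ind1} from Theorem \ref{ind} by composing $\iota$ with an explicit \emph{linear} anti-symmetrization, so that linearity of the resulting map is preserved. Given $q\in \QZ(H,U)$, I would set
$$
q'(h)=\tfrac12\bigl(q(h)-h\,q(h^{-1})\bigr),\qquad h\in H.
$$
Since $q(h^{-1})\in U$ and $U$ is $H$-invariant, $q'$ takes values in $U$, and the assignment $q\mapsto q'$ is manifestly linear. This is the formula underlying Lemma \ref{antisym}; I would use it explicitly precisely so that linearity is retained rather than invoking an abstract existence statement.

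Next I would check that $q'\in\QZas(H,U)$. Anti-symmetry is immediate: $q'(h^{-1})=\tfrac12\bigl(q(h^{-1})-h^{-1}q(h)\bigr)=-h^{-1}q'(h)$. For the quasi-cocycle property, note that besides $q$ the map $r(h):=h\,q(h^{-1})$ is also a $1$-quasi-cocycle with the same constant: applying the defining inequality of $q$ to the pair $(g^{-1},f^{-1})$ and acting by the isometry $fg$ gives $r(fg)-r(f)-f\,r(g)=fg\bigl(q(g^{-1}f^{-1})-q(g^{-1})-g^{-1}q(f^{-1})\bigr)$, which has norm $\le\e$. Hence $q'=\tfrac12(q-r)\in\QZ(H,U)$, and therefore $q'\in\QZas(H,U)$. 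Moreover, applying the defining inequality of $q$ to $(e,e)$ gives $\|q(e)\|\le\e$, and to $(h,h^{-1})$ gives $\|q(h)+h\,q(h^{-1})\|\le\|q(e)\|+\e\le 2\e$; since $q'(h)-q(h)=-\tfrac12\bigl(q(h)+h\,q(h^{-1})\bigr)$, we obtain $\sup_{h\in H}\|q'(h)-q(h)\|\le\e$.

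Finally I would put $\kappa(q):=\iota(q')$, where $\iota$ is the linear map of Theorem \ref{ind}. As a composite of the linear map $q\mapsto q'$ with $\iota$, the map $\kappa\colon\QZ(H,U)\to\QZas(G,V)\subseteq\QZ(G,V)$ is linear. By Theorem \ref{ind} we have $\kappa(q)|_H=\iota(q')|_H\equiv q'$, which lies in $\QZ(H,U)$, and by the estimate above $\sup_{h\in H}\|\kappa(q)(h)-q(h)\|=\sup_{h\in H}\|q'(h)-q(h)\|\le\e<\infty$, as required. There is no genuine obstacle in this deduction, all the difficulty being concentrated in Theorem \ref{ind}; the only point demanding a little attention is to carry out the anti-symmetrization by the explicit formula above, so that the resulting $\kappa$ is linear.
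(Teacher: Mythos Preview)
Your proposal is correct and follows essentially the same approach as the paper: the paper derives the corollary by composing the linear anti-symmetrization $\alpha(q)(h)=\tfrac12(q(h)-hq(h^{-1}))$ of Lemma~\ref{antisym} with the map $\iota$ of Theorem~\ref{ind}, which is exactly your $\kappa=\iota\circ\alpha$. You simply spell out the verifications (notably that $h\mapsto hq(h^{-1})$ is itself a quasi-cocycle) that the paper leaves as ``straightforward.''
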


The notion of a hyperbolically embedded subgroup of a group was introduced in \cite{DGO} and encompasses many examples of algebraic and geometric nature. We discuss some of them here and refer to the next section and \cite{DGO} for the definition and details.

\begin{enumerate}
\item[(a)] Let $G$ be any group and let $H\le G$ be a finite subgroup or $H=G$. Then $H$ is
\he in $G$. In what follows these cases are referred to as \emph{degenerate.}

\item[(b)] Let $ G$ be a group hyperbolic relative to a collection of peripheral subgroups $\Hl$. Then every peripheral subgroup is \he in $G$. In particular, if $G=H_1\ast H_2$, then $H_1$ and $H_2$ are \he in $G$.

\item[(c)] Let $G$ be a relatively hyperbolic group and let $g$ be a loxodromic element. Then $g$ is contained in the unique maximal virtually cyclic subgroup $E(g)$ of $G$ and $E(g)$ is \he in $G$ \cite{Osi06}. In particular, this holds for every infinite order element $g$ of a hyperbolic group $G$.
\item[(d)] More generally, let $G$ be a group acting on a hyperbolic space and containing a loxodromic element $g$ that satisfies the Bestvina-Fujiwara WPD condition (see \cite{BF} or \cite{DGO} for the definition). Then $g$ is contained in the unique maximal virtually cyclic subgroup $E(g)$ of $G$ and $E(g)$ is \he in $G$ \cite[Theorem 6.8]{DGO}. This general result applies in the following cases: (d$_1$) $G$ is the mapping class group of a punctured closed orientable surface and $g$ is a pseudo-Anosov element \cite{BF}; (d$_2$) $G=Out(F_n)$ and $g$ is a fully irreducible automorphism \cite{BFe}.
\item[(e)] Similarly to the previous example, let $G$ be a group acting properly on a proper $CAT(0)$ space and let $g$ be a rank-1 element. Then $g$ is contained in the unique maximal virtually cyclic subgroup $E(g)$ of $G$ and $E(g)$ is \he in $G$ \cite{Sis}.
\end{enumerate}

\begin{ex}[cf. \cite{BF,CF}]
Let us illustrate our theorem by extending quasimorphisms in the case when $G$ and $g$ are as in examples (c), (d), or (e) above. It is well known and easy to prove that every infinite virtually cyclic group is either finite-by-(infinite cyclic) or finite-by-(infinite dihedral). If $E(g)$ is of the former type, there exists a homomorphism $q\colon E(g)\to \mathbb R$ that extends the natural map $g^n \to n$. By our theorem, $q$ extends to a quasimorphism of $G$, which can be thought of as a generalization of the Brooks' counting quasimorphism. In particular, such quasimorphisms can always be constructed if $G$ has no involutions.

On the other hand, if $E(g)$ is finite-by-(infinite dihedral), then it is easy to show that there exists $a\in G$ and  $n\in \mathbb N$ such that $$a^{-1}g^na=g^{-n}.$$ This equality implies that every quasimorphism $E(g)\to \mathbb R$ is bounded. Thus no analogue of the counting quasimorphism exists in this case.
\end{ex}

In Section \ref{sepco}, we develop the main idea in the construction of our extension, which is the notion of seperating cosets of a subgroup $H$ which is hyperbolically embedded in $G$.  This allows use to associate a canonical, finite set of $H$-cosets to each $g\in G$, and to each such coset a finite collection of $h\in H$. This is essentially what is given by the normal forms of elements in Example \ref{ex2}, and we are then able to extend quasi-cocycles in a similiar manner. The main technical tool in proving that our extension actually gives a quasi-cocycle is the decompostion of the separating cosets of a triangle in Lemma \ref{fgh}.

In Section \ref{app}, we obtain some other corollaries of our main result. Recall that the class $\mathcal C_{reg}$ of Monod-Shalom is the class of groups for which $H_b^2(G, \ell^2(G))\ne 0$. This definition was proposed as cohomological characterization of the notion of ``negative curvature" in group theory \cite{MS2}. In \cite{MS} Monod and Shalom develop a rich rigidity theory with respect to measure equivalence and orbit equivalence of actions of groups in $\mathcal C_{reg}$. These results have a variety of applications to measurable group theory, ergodic theory and von Neumann algebras.

Another similar class of groups is the class $\mathcal D_{reg}$ introduced by Thom \cite{Thom}. $G\in \mathcal D_{reg}$ if $G$ is non-amenable and there exists some $q\in \QZ(G,\ell^2(G))$ which is unbounded. Thom proved rigidity results about the subgroup structure of groups in $\mathcal D_{reg}$ and showed that this class is closely related to $\mathcal C_{reg}$. However neither inclusion is known to hold between these two classes.

Let $\mathcal X$ denote the class of groups with non-degenerate hyperbolically embedded subgroups. Using Corollary \ref{ind1} and the fact that every group $G\in \mathcal X$ contains a virtually free (but not virtually cyclic) \he subgroup \cite{DGO}, we prove the following.

\begin{cor}\label{cor1}
For any $G\in \mathcal X$, the dimension of the kernel of the comparison map $H_b^2(G, \ell^p(G))\to H^2(G, \ell^p(G))$ is infinite. In particular, $\mathcal X \subseteq \mathcal C_{reg}\cap \mathcal D_{reg}$.
\end{cor}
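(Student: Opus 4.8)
The plan is to reduce to the case of a virtually free (but not virtually cyclic) subgroup and then build infinitely many ``independent'' quasi-cocycles on it whose images under $\kappa$ remain independent in the relevant quotient. First I would invoke the result from \cite{DGO} that every $G\in\mathcal X$ contains a \he subgroup $H$ which is virtually free but not virtually cyclic; by replacing $H$ with a finite-index free subgroup if necessary (and checking this preserves hyperbolic embeddedness, or citing \cite{DGO} for it), I may assume $H=F$ is a free group of rank $\ge 2$. The ambient coefficient module is $V=\ell^p(G)$, and I take the $F$-submodule $U$ to be the closure of the span of $\ell^p(F)\subseteq\ell^p(G)$ (note $F$ acts on $\ell^p(G)$ by left translation, and $\ell^p(F)$ sits inside as the functions supported on $F$, which is $F$-invariant). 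Since $F$ is non-abelian free, the space $\QZ^1(F,\ell^p(F))/(\ell^\infty(F,\ell^p(F))+Z^1(F,\ell^p(F)))$ is infinite-dimensional: this is classical for $p=2$ (e.g.\ via the work on $\ell^2$-cohomology of free groups, or one exhibits explicit quasi-cocycles generalizing Brooks' construction), and the same explicit constructions work for all $p\in[1,\infty)$. I would either cite this or sketch it: pick infinitely many reduced words $w_1,w_2,\dots$ in $F$ with no common ``pieces'', and form vector-valued analogues of counting quasi-cocycles taking values in $\ell^p(F)$, e.g.\ $q_i(g)=\sum$ over occurrences of $w_i$ in the reduced form of $g$ of a suitable translate of a fixed $\delta$-function; these are unbounded and pairwise independent modulo bounded maps and cocycles.

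Next I would apply Corollary \ref{ind1} to the data $(G,H=F,V=\ell^p(G),U=\ell^p(F))$, obtaining a linear map $\kappa\colon\QZ^1(F,\ell^p(F))\to\QZ^1(G,\ell^p(G))$ with $\sup_{h\in F}\|\kappa(q)(h)-q(h)\|<\infty$ for every $q$. The key claim is that $\kappa$ descends to an \emph{injective} linear map on a suitable infinite-dimensional subspace of $\QZ^1(F,\ell^p(F))/(\ell^\infty+Z^1)$ into $\QZ^1(G,\ell^p(G))/(\ell^\infty(G,\ell^p(G))+Z^1(G,\ell^p(G)))$, which by the identification recalled in the introduction is exactly the kernel of the comparison map $H_b^2(G,\ell^p(G))\to H^2(G,\ell^p(G))$. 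To see injectivity: suppose $\kappa(q)=b+c$ with $b$ bounded on $G$ and $c\in Z^1(G,\ell^p(G))$. Restricting to $F$ and using that $\kappa(q)|_F$ agrees with $q$ up to a bounded map, we get that $q$ itself equals $b'+c|_F$ for some bounded $b'$ on $F$; and $c|_F\in Z^1(F,\ell^p(F))$ provided the restriction of $c$ lands in $U=\ell^p(F)$ — here I need that the construction of $\kappa$, or an easy post-composition with a projection, keeps $\kappa(q)$ taking values in a controlled way, or more simply that $c|_F$ is a genuine $F$-cocycle valued in $\ell^p(G)$ and I argue in $\QZ^1(F,\ell^p(G))/(\ell^\infty+Z^1)$ instead, using that $\ell^p(F)\hookrightarrow\ell^p(G)$ is $F$-equivariantly split (as $\ell^p(G)=\bigoplus_{Fg}\ell^p(Fg)$ as an $F$-module, $\ell^p$-sum) so that a class trivial in $H^2(F,\ell^p(G))$ that comes from $\ell^p(F)$ is already trivial in $H^2(F,\ell^p(F))$. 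Either way, $q$ becomes trivial in $\QZ^1(F,\ell^p(F))/(\ell^\infty+Z^1)$, contradicting the choice of an infinite independent family. Hence the kernel of the comparison map for $G$ is infinite-dimensional.

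Finally, the ``in particular'' clause: infinite dimension of this kernel forces $H_b^2(G,\ell^p(G))\ne 0$, and one checks $G$ is non-amenable (it contains a non-abelian free subgroup) and that $\kappa$ produces an unbounded element of $\QZ^1(G,\ell^2(G))$ (take $p=2$ and any of the unbounded $q_i$, noting $\kappa(q_i)$ is unbounded already on $F$), so $G\in\mathcal C_{reg}\cap\mathcal D_{reg}$. The main obstacle I anticipate is the injectivity/independence step — specifically, controlling the values of $\kappa(q)$ well enough that ``$\kappa(q)$ trivial in the $G$-quotient'' pulls back to ``$q$ trivial in the $F$-quotient''. The cleanest route is to work throughout with coefficients in $\ell^p(G)$ on the $F$-side as well and exploit the $F$-equivariant $\ell^p$-direct-sum decomposition of $\ell^p(G)$ over right $F$-cosets to transfer (non)vanishing between $H^2(F,\ell^p(F))$ and $H^2(F,\ell^p(G))$; producing the infinite independent family directly in $\QZ^1(F,\ell^p(G))$ (rather than $\ell^p(F)$) sidesteps the splitting subtlety entirely, since one may place the supports of the vector-valued counting quasi-cocycles on the coset $F$ itself.
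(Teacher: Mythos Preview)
Your approach is correct and essentially matches the paper's: both extend quasi-cocycles from a (virtually) free hyperbolically embedded subgroup via Corollary~\ref{ind1}, and both use the $H$-equivariant continuous projection $\ell^p(G)\to\ell^p(H)$ (arising from the right-coset decomposition you identify) to show that triviality of $\kappa(q)$ in $\QZ(G,\ell^p(G))/(\ell^\infty+Z^1)$ forces triviality of $q$ in $\QZ(H,\ell^p(H))/(\ell^\infty+Z^1)$. The paper abstracts this injectivity step as a standalone Proposition~\ref{dim} (whose sole hypothesis beyond $H\h G$ is the existence of such a continuous projection $\pi\colon V\to U$), works directly with the virtually free subgroup $H\cong F_n\times K$ supplied by \cite{DGO} rather than passing to a finite-index free subgroup, and cites \cite{Ham1} for $\dim EH_b^2(H,\ell^p(H))=\infty$ instead of building the quasi-cocycles by hand.
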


This corollary recovers several previously known results in a uniform way. For example, this was known for hyperbolic groups \cite{MMS} and more generally groups acting non-elementary and acylindrically on hyperbolic spaces \cite{Ham1}, groups acting properly on proper $CAT(0)$ spaces and containing a rank-$1$ isometry \cite{Ham2}, and $Out(F_n)$ for $n\ge 2$ \cite{Ham3}. All of these groups belong to $\mathcal X$ \cite{DGO}.

At the final stage of our work we learned that Bestvina, Bromberg, and Fujiwara  \cite{BBF} independently and simultaneously proved that the dimension of the kernel of the comparison map $H_b^2(G, E)\to H^2(G, E)$ is infinite for any group acting non-elementary on a hyperbolic space and containing a WPD loxodromic isometry and any uniformly convex Banach $G$-module $E$. In fact, the class of groups acting non-elementary on a hyperbolic space and containing a WPD loxodromic isometry coincides with our class $\mathcal X$ (see Theorem 6.8 and Corollary 6.10 in \cite{DGO}). Thus the result of Bestvina, Bromberg, and Fujiwara is stronger than Corollary \ref{cor1}.

As another application, we show that hyperbolically embedded subgroups are undistorted with respect to the stable commutator length, $scl$. For the definition of $scl$ we refer to Section \ref{app}. Given a group $G$ and a subgroup $H\le G$ it is straightforward to see that $scl _G(h)\le scl_H(h)$ for any $h\in [H,H]$, where $scl_G$ and $scl_H$ are the stable commutator lengths on $[G,G]$ and $[H,H]$, respectively.

On the other hand, recall that every torsion free group $H$ can be embedded in a group $G$ where every element is a commutator (see \cite[Theorem 8.1]{LS} or \cite{Osi10} for a finitely generated version of such an embedding). In particular, $scl_G$ vanishes on $G$, while $scl_H$ can be unbounded on $[H,H]$. Thus, in general, there  is no upper bound on $scl_H$ in terms of $scl_G$. In what follows, we say that $H$ is \emph{undistorted in $G$ with respect to the stable commutator length} if there exists a constant $B$ such that for every $h\in [H,H]$, we have $scl_H(h)\le Bscl _G(h).$

Using Theorem \ref{ind} and the Bavard duality, we obtain the following.

\begin{cor}\label{scl}
Let $G$ be a group, $H$ a \he subgroup of $G$. Then $H$ is undistorted in $G$ with respect to the stable commutator length.
\end{cor}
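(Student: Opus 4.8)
The plan is to combine the extension map $\iota$ of Theorem \ref{ind} with Bavard duality. Recall that for any group $\Gamma$ and any $x\in[\Gamma,\Gamma]$ one has
$$
scl_\Gamma(x)=\sup_{\psi}\frac{|\psi(x)|}{2D(\psi)},
$$
the supremum ranging over all homogeneous quasimorphisms $\psi\colon\Gamma\to\mathbb R$ that are not homomorphisms, where $D(\psi)=\sup_{a,b\in\Gamma}|\psi(ab)-\psi(a)-\psi(b)|$ is the defect. Thus, to prove the corollary it suffices to find a constant $B=B(G,H)$ so that for every homogeneous quasimorphism $\psi\colon H\to\mathbb R$ there is a homogeneous quasimorphism $\phi\colon G\to\mathbb R$ with $\phi|_H=\psi$ and $D(\phi)\le B\cdot D(\psi)$. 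Indeed, then for every $h\in[H,H]$ and every such $\psi$ we get $scl_G(h)\ge |\phi(h)|/(2D(\phi))=|\psi(h)|/(2D(\phi))\ge \tfrac{1}{2B}\cdot|\psi(h)|/(2D(\psi))$, and taking the supremum over $\psi$ and using Bavard duality for $H$ yields $scl_H(h)\le 2B\,scl_G(h)$, i.e. $H$ is undistorted.

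To build $\phi$, view $\mathbb R$ as a normed $G$-module with trivial $G$-action and put $V=U=\mathbb R$. A homogeneous quasimorphism $\psi$ on $H$ satisfies $\psi(h^{-1})=-\psi(h)=-h^{-1}\psi(h)$, so it is an anti-symmetric $1$-quasi-cocycle, i.e. $\psi\in\QZas(H,\mathbb R)$. Apply Theorem \ref{ind} to obtain $\iota(\psi)\in\QZas(G,\mathbb R)$ — a quasimorphism on $G$ with the \emph{exact} restriction property $\iota(\psi)|_H\equiv\psi$. Since $\iota(\psi)$ need not be homogeneous, let $\phi$ be its homogenization, $\phi(g)=\lim_{n\to\infty}\iota(\psi)(g^n)/n$. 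By the standard properties of homogenization, $\phi$ is a homogeneous quasimorphism with $\|\phi-\iota(\psi)\|_\infty\le D(\iota(\psi))$ and $D(\phi)\le 2D(\iota(\psi))$; and because $h^n\in H$ for all $n$ and $\psi$ is already homogeneous, $\phi|_H=\psi$. In particular $\phi$ is not a homomorphism (its restriction to $H$ is not), so it is an admissible test function for Bavard duality on $G$.

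What remains — and this is the crux of the argument — is the quantitative estimate $D(\iota(\psi))\le C\cdot D(\psi)$ for a constant $C=C(G,H)$ depending only on the \he structure used to define $\iota$ (then the previous paragraph applies with $B=2C$). I expect this to come from the explicit construction of $\iota$ via separating cosets (the construction proving Theorem \ref{ind}, stated in full as Theorem \ref{real}): if $q$ is a $1$-quasi-cocycle on $H$ with $\|q(ab)-q(a)-aq(b)\|\le\e$ for all $a,b\in H$, the construction should produce $\iota(q)$ satisfying the analogous inequality on $G$ with $\e$ replaced by $C\e$; taking $\e=D(\psi)$ gives the bound in the case at hand. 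The main obstacle is verifying exactly this: one must track the defect through the decomposition of the separating cosets of a geodesic triangle (Lemma \ref{fgh}) and check that each error term introduced in matching up the coset data of $f$, $g$ and $fg$ is controlled by a fixed multiple of $\e$, with a multiplicative constant that depends only on the hyperbolicity constant and the finiteness properties of the \he structure and not on $q$, $\psi$, or the element being evaluated. Everything else — the reduction through Bavard duality and the homogenization step — is routine.
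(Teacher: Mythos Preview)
Your overall strategy---Bavard duality on $H$, extend via $\iota$, homogenize, Bavard duality on $G$---is exactly the paper's. The gap is in the quantitative step: the inequality $D(\iota(\psi))\le C\cdot D(\psi)$ with $C=C(G,H)$ is \emph{false} as stated, and no amount of tracking the separating-coset construction will produce it. Theorem \ref{real}(b) gives
\[
D(\iota(\psi))\le 54K+66D(\psi),\qquad K=\max\{\,|\psi(h)| : \widehat d(1,h)<15C\,\},
\]
and $K$ is not controlled by $D(\psi)$. The set $\{h:\widehat d(1,h)<15C\}$ is a fixed finite subset of $H$, but a homogeneous quasimorphism can take arbitrarily large values there relative to its defect: just add a large real homomorphism $\beta\colon H\to\mathbb R$ to $\psi$; this leaves $D(\psi)$ unchanged while inflating $K$ at will. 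So the error terms in the triangle analysis are \emph{not} all multiples of $\e=D(\psi)$; some are multiples of the values $|\psi(h)|$ on short elements, and these depend on $\psi$.

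The paper's fix exploits precisely the freedom you ignored: since Bavard duality only sees $\psi|_{[H,H]}$, one may replace $\psi$ by $\psi'=\psi-\beta$ for a suitable homomorphism $\beta$ without changing the value at $h\in[H,H]$ or the defect. The paper first chooses a finite set $Y$ generating the subgroup of ``short'' elements and arranges (Lemma \ref{nice}) that $Y=Y_1\cup Y_2$ with $\xi(Y_1)$ independent in $H/[H,H]\otimes\mathbb Q$ and $\xi(Y_2)=0$; then (Lemma \ref{mq}) it picks $\beta$ killing $\psi$ on $Y_1$, so that $|\psi'(y)|\le 2D(\psi')\,scl_H(y)$ for all $y\in Y$. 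Writing each short element as a word of bounded length in $Y$ now yields $K\le M'\,D(\psi')$ for a constant $M'$ depending only on $(G,H)$ and $\max_{y\in Y}scl_H(y)$, and the rest of your argument goes through with $\psi'$ in place of $\psi$.
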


Even the following particular cases seem new. Recall that a subgroup $H\le G$ is \emph{almost malnormal} if $|H^g\cap H|<\infty $ for every $g\in G\setminus H$.

\begin{cor}\label{malnorm}
Every almost malnormal quasiconvex subgroup of a hyperbolic group is undistorted with respect to the stable commutator length. In particular, so is every finitely generated malnormal subgroup of a free group.
\end{cor}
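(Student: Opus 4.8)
The plan is to derive Corollary~\ref{malnorm} as an immediate consequence of Corollary~\ref{scl}; the only point needing an argument is that an almost malnormal quasiconvex subgroup $H$ of a hyperbolic group $G$ satisfies $H\h G$, after which the ``in particular'' clause is just a matter of checking hypotheses. First I would dispose of the degenerate cases. If $H=G$, the required inequality $scl_H(h)\le B\, scl_G(h)$ holds with $B=1$; if $H$ is finite, then every element of $[H,H]$ has finite order, so $scl_H$ vanishes identically on $[H,H]$ and the inequality is trivial. Hence one may assume $H$ is an infinite proper subgroup of the hyperbolic group $G$.

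The substantive step is to verify $H\h G$. For a quasiconvex subgroup $H$ of a hyperbolic group $G$, Bowditch's characterization of relative hyperbolicity says that $G$ is hyperbolic relative to $\{H\}$ if and only if $H$ is almost malnormal; and by item~(b) of the introduction (see also \cite{DGO}), every peripheral subgroup of a relatively hyperbolic group is \he in that group. Thus almost malnormality together with quasiconvexity gives $H\h G$. (Equivalently, one may appeal directly to the statement in \cite{DGO} that a quasiconvex subgroup of a hyperbolic group is \he precisely when it is almost malnormal.) Now Corollary~\ref{scl} applies and produces a constant $B$ with $scl_H(h)\le B\, scl_G(h)$ for all $h\in[H,H]$, i.e.\ $H$ is undistorted in $G$ with respect to the stable commutator length. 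This proves the first assertion.

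For the second assertion, a free group $F$ is hyperbolic, every finitely generated subgroup of a free group is quasiconvex (free groups are locally quasiconvex; equivalently, finitely generated subgroups are undistorted), and malnormality trivially implies almost malnormality. Hence a finitely generated malnormal subgroup $H\le F$ is an almost malnormal quasiconvex subgroup of the hyperbolic group $F$, and the first assertion applies. I expect no serious obstacle in this argument: all the difficulty has already been absorbed into Theorem~\ref{ind} and Corollary~\ref{scl}, and what remains is the standard identification of almost malnormal quasiconvex subgroups of hyperbolic groups with hyperbolically embedded ones, together with routine facts about free groups.
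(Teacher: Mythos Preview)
Your proposal is correct and follows essentially the same route as the paper: you invoke Bowditch's result that an almost malnormal quasiconvex subgroup of a hyperbolic group is peripheral in a relatively hyperbolic structure, deduce $H\h G$, and then apply Corollary~\ref{scl}. The paper's proof is identical in substance (it does not bother to separate out the degenerate cases or spell out the free-group clause, but these are routine).
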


In Section \ref{app} we show that the almost malnormality condition can not be omitted even for free groups (see Remark \ref{free-dist}).

\section{Preliminaries}

\paragraph{Notation and conventions.} In this paper we allow length functions and metrics to take infinite values. In particular, the word length $|\cdot |_S$ on a group $G$ corresponding to a (not necessary generating) set $S$ is defined by letting $|g|_S$ be the length of a shortest word in $S\cup S^{-1}$ representing $g$ if $g\in \langle S\rangle $ and $|g|_S=\infty $ otherwise. The corresponding metric on $G$ is denoted by $\d_S$; thus $\d_S(f,g)=|f^{-1}g|_S$.

By a path $p$ in a (Cayley) graph we always mean a combinatorial path; we denote the label of $p$ by $\Lab (p)$ and we denote the origin and terminus of $p$ by $p_-$ and $p_+$ respectively.

For the rest of the paper, we will refer to $1$-quasi-cocycles simply as quasi-cocycles.

\paragraph{Hyperbolically embedded subgroups}
Let $G$ be a group, $\Hl $ a collection of subgroups of $G$.
Let
\begin{equation}\label{calH}
\mathcal H= \bigsqcup\limits_{\lambda \in \Lambda } (H_\lambda\setminus \{ 1\}).
\end{equation}
Given a subset $X\subseteq G$ such that $G$ is generated by $X$ together with the union of all $H_\lambda$'s,  we denote by $\G $ the Cayley graph of $G$ whose edges are labeled by letters from the alphabet $X\sqcup\mathcal H$. That is, two vertices $g,h\in G$ are connected by an edge going from $g$ to $h$ and labeled by $a\in X\sqcup\mathcal H$ iff $a$ represents the element $g^{-1}h$ in $G$. Note that some letters from $X\sqcup\mathcal H$ may represent the same element in $G$, in which case $\G $ has multiple edges corresponding to these letters.

We think of the Cayley graphs $\Gamma (H_\lambda, H_\lambda \setminus\{1\})$ as (complete) subgraphs of $\G $. For every $\lambda \in \Lambda $, we introduce a \textit{relative metric} $\dl \colon H_\lambda \times H_\lambda \to [0, +\infty]$ as follows. Given $h,k\in H_\lambda $, let $\dl (h,k)$ be the length of a shortest path in $\G $ that connects $h$ to $k$ and has no edges in $\Gamma (H_\lambda, H_\lambda \setminus\{1\})$. If no such a path exists, we set $\dl (h,k)=\infty $. Clearly $\dl $ satisfies the triangle inequality. In case the collection consists of a single subgroup $H\le G$, we denote the corresponding relative metric on $H$ simply by $\widehat d$.

\begin{defn}
Let $G$ be a group, $X$ a (not necessary finite) subset of $G$. We say that a collection of subgroups $\Hl$ of $G$ is \emph{\he in $G$ with respect to $X$} (we write $\Hl \h (G,X)$) if the following conditions hold.
\begin{enumerate}
\item[(a)] The group $G$ is generated by $X$ together with the union of all $H_\lambda$ and the Cayley graph $\G $ is hyperbolic.
\item[(b)] For every $\lambda\in \Lambda $, $(H_\lambda, \dl )$ is a locally finite metric space; that is, any ball of finite radius in $H_\lambda $ contains finitely many elements.
\end{enumerate}
Further we say  that $\Hl$ is hyperbolically embedded in $G$ and write $\Hl\h G$ if $\Hl\h (G,X)$ for some $X\subseteq G$.
\end{defn}
\begin{figure}
  \centering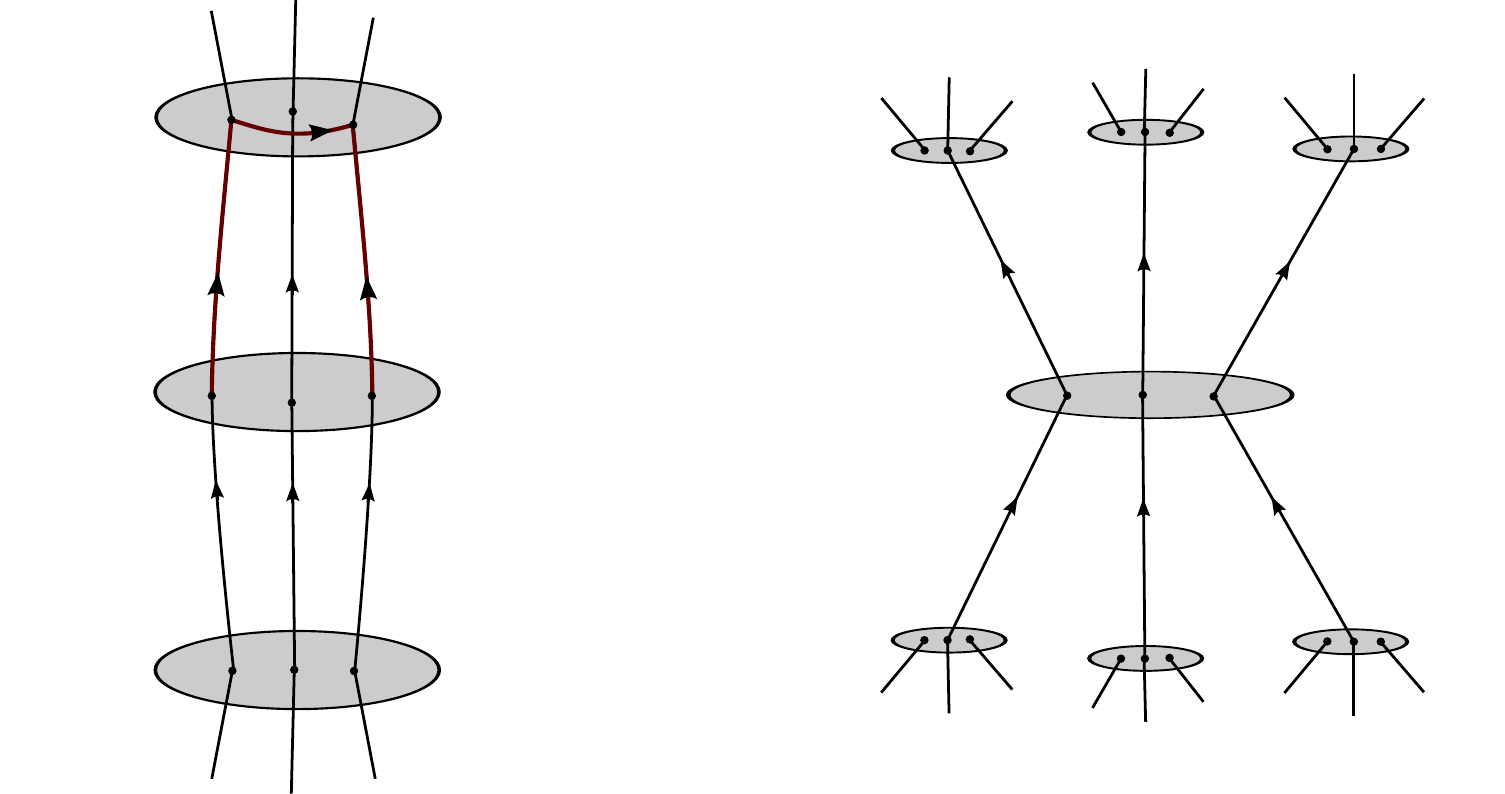\\
  \caption{Cayley graphs $\Gamma(G, X\sqcup H)$ for $G=H\times \mathbb Z$ and $G=H\ast \mathbb Z$.}\label{fig0}
\end{figure}

\begin{ex}
\begin{enumerate}
\item[(a)] Let $G$ be any group. Then $G\h G$.  Indeed take $X=\emptyset $. Then the Cayley graph $\Gamma(G, X\sqcup H)$ has diameter $1$ and $d(h_1, h_2)=\infty $ whenever $h_1\ne h_2$. Further, if $H$ is a finite subgroup of a group $G$, then $H\h G$. Indeed $H\h (G,X)$ for $X=G$. These cases are referred to as {\it degenerate}. In what follows we are only interested in non-degenerate examples.

\item[(b)] Let $G=H\times \mathbb Z$, $X=\{ x\} $, where $x$ is a generator of $\mathbb Z$. Then $\Gamma (G, X\sqcup H)$ is quasi-isometric to a line and hence it is hyperbolic. However $\widehat\d(h_1, h_2)\le 3$ for every $h_1, h_2\in H$. Indeed let $\Gamma _H$ denote the Cayley graph $\Gamma (H, H\setminus \{ 1\}) $. In the shift $x\Gamma _H$ of $\Gamma _H$ there is an edge (labeled by $h_1^{-1}h_2\in H$) connecting $h_1x$ to $h_2x$, so there is a path of length $3$ connecting $h_1$ to $h_2$ and having no edges in $\Gamma _H$ (see Fig. \ref{fig0}). Thus if $H$ is infinite, then $H\not\h (G,X)$. Moreover, a similar argument shows that $H\not\h G$.

\item[(c)] Let $G=H\ast \mathbb Z$, $X=\{ x\} $, where $x$ is a generator of $\mathbb Z$. In this case $\Gamma (G, X\sqcup H)$ is quasi-isometric to a tree (see Fig. \ref{fig0}) and $\widehat\d(h_1, h_2)=\infty $ unless $h_1=h_2$. Thus $H\h (G,X)$.
\end{enumerate}
\end{ex}

It turns out that the relative metric $\dl$ can be realized as a word metric with respect to some finite set.

\begin{lem}[{\cite[Lemma 4.10]{DGO}}]\label{relmet}
\label{Y}
Let $\Hl\h G$. Then for each $\lambda\in\Lambda$, there exists a finite subset $Y_{\lambda}\subseteq H_{\lambda}$ such that $\dl$ is bi-Lipschitz equivalent to the word metric with respect to $Y_{\lambda}$. That is, for $h_1,h_2\in H_\lambda $, $\dl (h_1,h_2)$ is finite if and only if $\d_{Y_\lambda}(h_1,h_2)$ is, and the ratio $\dl/\d_{Y_\lambda}$ is uniformly bounded on $H_\lambda\times H_\lambda$ minus the diagonal.
\end{lem}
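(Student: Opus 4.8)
The plan is to realize $\dl$ as a word metric by taking $Y_\lambda$ to be a metric ball for $\dl$ of a suitable radius, and to prove the bi-Lipschitz equivalence by an induction that peels off one generator at a time. Fix $\lambda\in\Lambda$; write $\Gamma=\G$, let $\Gamma_\lambda=\Gamma(H_\lambda,H_\lambda\setminus\{1\})$ be the corresponding complete subgraph, abbreviate $\dl$ to $\widehat d$, and set $\l(h)=\widehat d(1,h)$. Since left translation by an element of $H_\lambda$ is a label-preserving automorphism of $\Gamma$ that maps $\Gamma_\lambda$ onto itself, $\widehat d$ is left $H_\lambda$-invariant, so $\widehat d(h_1,h_2)=\l(h_1^{-1}h_2)$; by the triangle inequality and symmetry of $\widehat d$, the set $N:=\{h\in H_\lambda:\l(h)<\infty\}$ is a subgroup of $H_\lambda$. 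It therefore suffices to produce a finite $Y=Y_\lambda\subseteq H_\lambda$ with $\langle Y\rangle=N$ and $\d_Y\le\widehat d\le D\,\d_Y$ on $H_\lambda\times H_\lambda$ for some constant $D$ (both metrics being infinite off $N\times N$, which also gives the finiteness clause). I would take $D$ to be a constant depending only on the hyperbolicity constant $\delta$ of $\Gamma$, to be pinned down at the end, and set $Y:=\{h\in H_\lambda:0<\widehat d(1,h)\le D\}$; since, by the local finiteness axiom in the definition of a \he collection, the $\widehat d$-ball of radius $D$ about $1$ is finite, $Y$ is finite.

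The inequality $\widehat d\le D\,\d_Y$ is then immediate: if $h=y_1\cdots y_m$ with $y_i\in Y\cup Y^{-1}$, then $\l(h)\le\sum_i\l(y_i)\le Dm$ by left invariance and the triangle inequality; the same estimate shows $\d_Y(h_1,h_2)<\infty$ implies $\widehat d(h_1,h_2)<\infty$.

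For the reverse inequality I would prove, by induction on $n$, that $\l(h)=n<\infty$ implies $|h|_Y\le n$ (in particular $h\in\langle Y\rangle$); the case $n=0$ is trivial. Let $n\ge1$ and let $p$ be a shortest $\Gamma_\lambda$-avoiding path from $1$ to $h$, so $\ell(p)=n$. If some interior vertex $v$ of $p$ lies in $H_\lambda$, cut $p$ at $v$ into $p'p''$: then $v$ and $v^{-1}h$ lie in $H_\lambda$, the subpaths $p',p''$ avoid $\Gamma_\lambda$, and hence $\l(v)\le\ell(p')<n$ and $\l(v^{-1}h)=\widehat d(v,h)\le\ell(p'')<n$ with $\l(v)+\l(v^{-1}h)\le n$; the inductive hypothesis gives $|h|_Y\le|v|_Y+|v^{-1}h|_Y\le\l(v)+\l(v^{-1}h)\le n$. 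If no interior vertex of $p$ lies in $H_\lambda$, then $p$ has minimal length among paths from $1$ to $h$ that avoid $\Gamma_\lambda$ and meet $H_\lambda$ only at their endpoints, and the final step shows this length is at most $D$, so that $h\in Y$ and $|h|_Y=1\le n$. Together with the previous paragraph this yields $\d_Y\le\widehat d\le D\,\d_Y$ and the equivalence of finiteness; and since $Y\subseteq N$ while the induction gives $N\subseteq\langle Y\rangle$, also $\langle Y\rangle=N$.

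The heart of the argument --- and the one place where hyperbolicity is genuinely needed --- is the claim invoked above: there is $D=D(\delta)$ such that, whenever $h\in H_\lambda$ can be joined to $1$ by a $\Gamma_\lambda$-avoiding path meeting $H_\lambda$ only at its endpoints, some such path has length at most $D$. The mechanism is that, because $h\in H_\lambda$, there is a \emph{single} edge $e$ of $\Gamma$ joining $1$ to $h$ (the one labelled $h$, which lies in $\Gamma_\lambda$), so $\dxh(1,h)=1$; consequently any $\Gamma_\lambda$-avoiding path $q$ from $1$ to $h$ bounds, together with $e$, a $\delta$-thin bigon, which confines every vertex of $q$ to the ball of some radius $r=r(\delta)$ about $1$ in the metric $\dxh$. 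One then has to use hyperbolicity of $\Gamma$ in an essential way --- this is precisely the phenomenon that a ``primitive excursion'' leaving the coset $H_\lambda$ and returning to it has length bounded in terms of $\delta$ --- to conclude that a \emph{shortest} such $q$ with no interior vertex in $H_\lambda$ has bounded length. I expect this estimate to absorb essentially all of the work; it is carried out in \cite[Lemma 4.10]{DGO}, in parallel with the corresponding analysis for relatively hyperbolic groups.
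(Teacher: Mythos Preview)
The paper does not prove this lemma; it is quoted verbatim from \cite[Lemma 4.10]{DGO} and used as a black box, so there is no argument in the paper to compare your proposal against.

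On its own merits: your reduction is sound up to the final claim. Taking $Y$ to be a $\widehat d$-ball, the easy inequality $\widehat d\le D\,\d_Y$, and the induction that splits a shortest $\Gamma_\lambda$-avoiding path at an interior vertex lying in $H_\lambda$ are all correct, and they genuinely reduce the lemma to the ``primitive excursion'' bound you isolate.

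Where the argument breaks is exactly at that bound, and the heuristic you sketch for it does not work, for two independent reasons. First, $\delta$-thinness is a statement about \emph{geodesic} bigons; your path $q$ is only shortest among $\Gamma_\lambda$-avoiding paths, not a geodesic or quasi-geodesic in $\Gamma$ --- indeed $\dxh(1,h)=1$ while $\ell(q)$ is precisely the quantity you are trying to bound, so $q$ is maximally non-geodesic. The standard hyperbolicity estimate for an arbitrary path of length $\ell$ between points at distance $1$ only confines it to an $O(\delta\log\ell)$-neighbourhood of the geodesic, which gives no uniform bound on $\ell$. Second, even if you could confine the vertices of $q$ to a $\dxh$-ball of radius $r(\delta)$ about $1$, this would not bound $\ell(q)$: balls in $\dxh$ are typically infinite (all of $H_\lambda$ lies at $\dxh$-distance $\le 1$ from $1$), so a path can wander indefinitely inside such a ball. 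Thus your sketch gives neither a bound on how far $q$ strays nor, from that, a bound on its length.

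You acknowledge this yourself by deferring the step to \cite{DGO}; but then the proposal is really a restatement of the lemma together with an outline of why it is plausible, not a proof. The actual argument requires structural input about how $H_\lambda$ sits inside $\Gamma$ beyond raw $\delta$-thinness.
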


\paragraph{Components.} Let $\Hl \h (G,X)$. Let $q$ be a path in the Cayley graph $\G $. A (non-trivial) subpath $p$ of $q$ is called an \emph{$H_\lambda $-subpath}, if the label of $p$ is a word in the alphabet $H_\lambda \setminus\{ 1\} $. An $H_\lambda$-subpath $p$ of $q$ is an {\it $H_\lambda $-component} if $p$ is not contained in a longer $H_\lambda$-subpath of $q$; if $q$ is a loop, we require in addition that $p$ is not contained in any longer $H_\lambda $-subpath of a cyclic shift of $q$. Further by a {\it component} of $q$ we mean an $H_\lambda $-component of $q$ for some $\lambda \in \Lambda$.

Two $H_\lambda $-components $p_1, p_2$ of a path $q$ in $\G $ are called {\it connected} if there exists a
path $c$ in $\G $ that connects some vertex of $p_1$ to some vertex of $p_2$, and ${\Lab (c)}$ is a word consisting only of letters from $H_\lambda \setminus\{ 1\} $. In algebraic terms this means that all vertices of $p_1$ and $p_2$ belong to the same left coset of $H_\lambda $. Note also that we can always assume that $c$ has length at most $1$ as every non-trivial element of $H_\lambda $ is included in the set of generators.

It is convenient to extend the metric $\dl $ defined above to the whole group $G$ by assuming $\dl (f,g)\colon =\dl (f^{-1}g,1)$ if $f^{-1}g\in H_\lambda $ and $\dl (f,g)=\infty $ otherwise. One important technical tool is the following corollary of (a particular case of) \cite[Proposition 4.13]{DGO}.

\begin{lem}\label{C}
There exists a constant $C>0$ such that for any geodesic $n$-gon $p$ in $\G$  and any isolated component $a$ of $p$, we have $\dl (a_-, a_+)\le Cn$.
\end{lem}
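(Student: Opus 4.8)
The plan is to deduce the statement from the cited particular case of \cite[Proposition 4.13]{DGO}; the only real work is to repackage the component $a$ as a \emph{side} of a geodesic polygon with $O(n)$ sides. Recall that \cite[Proposition 4.13]{DGO} provides a constant $\mu=\mu(G,X,\Hl)$ such that, for every geodesic $m$-gon $P$ in $\G$, the sum of $\dl(b_-,b_+)$ taken over the sides $b$ of $P$ which are isolated $H_\lambda$-components of $P$ (over all $\lambda\in\Lambda$) does not exceed $\mu m$; in particular $\dl(b_-,b_+)\le\mu m$ for any single such side $b$. Thus it suffices, given the geodesic $n$-gon $p$ and the isolated component $a$, to construct a geodesic $m$-gon with $m\le 3n$ one of whose sides is an isolated component having the same two endpoints as $a$.

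First I would reduce to the case that $a$ is a single edge. All vertices of $a$ lie in one left coset $gH_\lambda$, so $a_-^{-1}a_+\in H_\lambda$. If $a_-^{-1}a_+=1$, then $\dl(a_-,a_+)=0$ and there is nothing to prove; otherwise let $a'$ be the edge of $\G$ from $a_-$ to $a_+$ labelled by $a_-^{-1}a_+\in H_\lambda\setminus\{1\}$. Let $r$ be the complementary arc of the loop $p$, running from $a_+$ back to $a_-$ and avoiding $a$, and put $p'=a'r$. Since $a$ is a block of consecutive edges and the sides of $p$ are geodesic, $a$ meets at most $n$ consecutive sides of $p$ --- a side could not contribute two consecutive $H_\lambda$-edges to $a$, since two such edges may be replaced by a single $H_\lambda$-edge --- so excising $a$ leaves $r$ a concatenation of at most $n+1$ geodesic subpaths. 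Hence $p'$ is a geodesic $m$-gon with $m\le n+2\le 3n$ (the case $n=1$ is vacuous, a geodesic $1$-gon being trivial), and $a'$ is one of its sides.

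The heart of the argument is to verify that $a'$ is an isolated $H_\lambda$-component of $p'$. The two edges of $r$ flanking $a'$ are precisely the two edges of $p$ that flanked $a$, and neither of them is an $H_\lambda$-edge, by maximality of $a$ as a component of $p$. It follows that the surgery creates no $H_\lambda$-subpath across the new junction, so the $H_\lambda$-components of $p'$ are $a'$ together with the $H_\lambda$-components of $p$ distinct from $a$; and, being flanked by non-$H_\lambda$-edges, $a'$ is itself a maximal $H_\lambda$-subpath, hence a genuine component of $p'$. Because two components are connected exactly when their vertex sets lie in a common left coset of $H_\lambda$, and $a'$ inherits the coset $a_-H_\lambda$ of $a$, the isolation of $a$ in $p$ forces the isolation of $a'$ in $p'$. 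Now \cite[Proposition 4.13]{DGO} gives $\dl(a_-,a_+)=\dl(a'_-,a'_+)\le\mu m\le 3\mu n$, so $C=3\mu$ works.

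I expect the genuine obstacle to be exactly this last verification --- that replacing the (possibly non-geodesic) component $a$ by the single edge $a'$ preserves both the property of being a component and the property of being isolated. This is where the maximality clause in the definition of a component, and the coset description of connectedness, are used; by contrast, bounding the number of sides produced by the surgery is routine bookkeeping.
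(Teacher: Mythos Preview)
Your proposal is correct and follows essentially the same route as the paper: both arguments repackage the isolated component as a side of a geodesic polygon with at most $n+2$ sides and then invoke \cite[Proposition~4.13]{DGO}. The only difference is cosmetic --- the paper assumes $a$ lies within a single side $p_1=qar$ and simply subdivides that side (so the underlying cycle is unchanged and no separate isolation check is needed), whereas you collapse $a$ to a single edge $a'$ and then verify that isolation survives the surgery.
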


\begin{proof}
Let $p=p_1\ldots p_n$, where $p_1, \ldots, p_n$ are geodesic. For definiteness, suppose that $a$ is a component of $p_1$, i.e., $p_1=qar$. By \cite[Proposition 4.13]{DGO} applied to the $(n+2)$-gon $qarp_2\ldots p_n$, we have $\dl (a_-, a_+)\le D(n+2)\le 2D n$, where $D$ is a constant independent of $n$ ($D=D(1,0)$ in the notation of \cite[Proposition 4.13]{DGO}). It remains to take any positive $C\ge 2D$.
\end{proof}

\paragraph{Quasi-cocycles.}
For a quasi-cocycle $q\in \QZ(G,V)$ we define its \emph{defect} $D(q)$ by
\begin{equation}\label{Dq}
D(q)=\sup\limits_{f,g\in G} \| q(fg)-q(f)-fq(g)\|.
\end{equation}
Note that
\begin{equation}\label{q1}
\| q(1)\| = \| q(1\cdot 1)-q(1)-1q(1)\| \le D(q).
\end{equation}

We will use the following elementary fact.

\begin{lem}\label{antisym}
Let $G$ be a group, $V$ a $G$-module.
Then there exists a linear map $$\alpha \colon \QZ(G,V)\to QZ_{as}^1(G,V)$$ such that for every $q\in \QZ(G,V)$ we have $$ \sup_{g\in G} \| \alpha (q)(g)-q(g)\|< D(q).$$
\end{lem}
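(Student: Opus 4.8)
The plan is to produce $\alpha(q)$ by an explicit anti-symmetrization, and then to check that the correction term is uniformly small and that the construction is linear. Given $q \in \QZ(G,V)$, I would set
$$
\alpha(q)(g) = \tfrac12\bigl( q(g) - g\,q(g^{-1}) \bigr).
$$
This is manifestly linear in $q$. To see that $\alpha(q)$ is anti-symmetric, compute $\alpha(q)(g^{-1})$ directly: it equals $\tfrac12( q(g^{-1}) - g^{-1} q(g))$, and comparing with $-g^{-1}\alpha(q)(g) = -g^{-1}\cdot\tfrac12(q(g) - g\,q(g^{-1})) = \tfrac12( q(g^{-1}) - g^{-1}q(g))$ gives exactly the anti-symmetry relation $\alpha(q)(g^{-1}) = -g^{-1}\alpha(q)(g)$. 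This part is formal and costs nothing.

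Next I would bound $\|\alpha(q)(g) - q(g)\|$. We have $\alpha(q)(g) - q(g) = -\tfrac12\bigl( q(g) + g\,q(g^{-1})\bigr)$, so it suffices to show $\|q(g) + g\,q(g^{-1})\|$ is bounded by something less than $2D(q)$ uniformly in $g$. Applying the defining inequality \eqref{Dq} to the pair $f = g$, $g' = g^{-1}$ gives $\|q(1) - q(g) - g\,q(g^{-1})\| \le D(q)$, hence $\|q(g) + g\,q(g^{-1})\| \le D(q) + \|q(1)\| \le 2D(q)$ by \eqref{q1}. This yields $\|\alpha(q)(g) - q(g)\| \le D(q)$, which is the desired bound with a weak inequality; to get the strict inequality claimed in the statement one refines slightly: either note that equality would force $\|q(1)\| = D(q)$ together with saturation of \eqref{Dq} at a single pair — situations that can be excluded after replacing $q$ by $q - q(1)$ (which changes nothing essential and is itself a linear operation), or simply observe that the supremum over $g$ of the correction, being bounded by $D(q)$, can only equal $D(q)$ in degenerate cases that one handles separately. (In the trivial case $D(q) = 0$, $q$ is already a genuine cocycle, $q(1) = 0$, and $\alpha(q) = q$ is already anti-symmetric, so the strict inequality $0 < 0$ is vacuously irrelevant and one interprets the statement with the understanding that $\alpha(q) = q$ there.)

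Finally I would verify that $\alpha(q)$ is itself a quasi-cocycle with controlled defect, since the target space $QZ^1_{as}(G,V)$ requires this. Since $\alpha(q) = q + r$ where $r(g) = -\tfrac12(q(g) + g\,q(g^{-1}))$ is uniformly bounded by $D(q)$, and since adding a uniformly bounded map to a quasi-cocycle yields a quasi-cocycle (the defect increases by at most $3\sup\|r\|$ via the triangle inequality on $r(fg), r(f), f r(g)$), $\alpha(q) \in \QZ(G,V)$ with $D(\alpha(q)) \le D(q) + 3D(q) = 4D(q)$, and it is anti-symmetric, so $\alpha(q) \in QZ^1_{as}(G,V)$. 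The only point requiring any care is the passage from the weak to the strict inequality in the final bound; everything else is a direct substitution. I expect that minor normalization — working with $q - q(1)$ — is the cleanest route around that nuisance, and is the step I would flag as the one place the argument is slightly more than mechanical.
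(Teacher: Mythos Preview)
Your approach is essentially identical to the paper's: the same anti-symmetrization formula $\alpha(q)(g)=\tfrac12(q(g)-g\,q(g^{-1}))$, the same triangle-inequality bound via $\|q(1)-q(g)-g\,q(g^{-1})\|\le D(q)$ and $\|q(1)\|\le D(q)$, and the same verification of anti-symmetry. The paper's own proof in fact only establishes the weak inequality $\le D(q)$, so your concern about strict versus weak is a discrepancy in the paper's statement rather than a gap in your argument; you should not spend effort trying to upgrade it (your $q-q(1)$ suggestion does not quite work as a linear fix, but it is also unnecessary).
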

\begin{proof}
Take $\alpha (q)(g)=\frac12 (q(g) - gq(g^{-1})) $. Verifying all properties is straightforward. Indeed for every $g\in G$, we have
$$
\| \alpha (q)(g)-q(g)\| = \frac{1}{2}\| -q(g)-gq(g^{-1})\| \le \frac{1}{2}\| q(1)-q(g)-gq(g^{-1})\| +\frac{1}{2}\| q(1)\| \le D(q),
$$
where the last inequality uses (\ref{q1}).  Further,
$$
\alpha (q)(g^{-1}) = \frac12 (q(g^{-1})-g^{-1}q(g))=\frac12 g^{-1} (gq(g^{-1}-q(g))=-g^{-1} \alpha (q)(g).
$$
\end{proof}

\paragraph{Bounded cohomology.} Recall the definition of the bounded cohomology of a (discrete) group $G$ with coefficients in an arbitrary normed $G$-module $V$. Let $C^n(G, V)$ be the vector space of $n$-cochains with coefficients in $V$, i.e., functions $G^n\to V$. The coboundary maps $d^n :C^n(G, V)\to C^{n+1}(G, V)$ are defined by the formula

\begin{align*}
d^n f(g_1, ..., g_{n+1})=& g_1f(g_2,...,g_{n+1})+\sum_{i=1}^{n}(-1)^if(g_1,...,g_{i-1},g_ig_{i+1},g_{i+2},...,g_{n+1})\\& +(-1)^{n+1} f(g_1,...,g_n).
\end{align*}

Let $Z^n(G, V)$ and $B^n(G, V)$ denote the cocycles and coboundaries of this complex respectively; that is, $Z^n(G, V)=\Ker d^n$ and $B^n(G, V)=\im d^{n-1}$ for $n>0$ and $B^0(G,V)=0$. Recall that the ordinary cohomology groups are defined by $$H^n(G,V)\colon =Z^n(G,V)/B^n(G,V).$$

Restricting to the subspaces $C_b^n(G, V)$ of $C^n(G, V)$ consisting of functions whose image is bounded with respect to the norm on $V$, we get the complex of bounded cochains. Similarly let $Z_b^n(G, V)$ and $B_b^n(G, V)$ denote its cocycles and coboundaries. Then the group $$H^n_b(G, V)\colon =Z_b^n(G, V)/B_b^n(G, V)$$ is called the \emph{$n$-th bounded cohomology group of $G$ with coefficients in $V$}.

Note that there is a natural map $c\colon H^n_b(G,V)\to H^n(G,V)$ which is induced by the inclusion map of the cochain complexes. This map is called the \emph{comparison map}, and the kernel of $c$ is denoted $EH_b^n(G, V)$. The following lemma is proved in \cite{Mon} (see also \cite{Thom}) in the case when $V$ is a Banach space. The same  proof works in the general case. We briefly sketch the argument for convenience of the reader.

\begin{lem}\label{seq}
Let $G$ be a discrete countable group, $V$ a normed $G$-module. Then there exists an exact sequence
$$0 \to \ell^{\infty}(G,V)+ Z^1(G,V) \to \QZ(G,V) \stackrel{\delta}{\to} H^2_b(G,V) \stackrel{c}{\to} H^2(G,V),$$
where $\ell^{\infty}(G,V)$ is the vector space of all uniformly bounded functions $G\to V$.
\end{lem}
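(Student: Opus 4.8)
The plan is to read everything off the inhomogeneous bar complex introduced above, so that the only real content is the identification of $\QZ(G,V)$ inside $C^1(G,V)$. The key observation is that for a $1$-cochain $q\colon G\to V$ the coboundary formula with $n=1$ gives
$$d^1q(g_1,g_2)=g_1q(g_2)-q(g_1g_2)+q(g_1)=-\bigl(q(g_1g_2)-q(g_1)-g_1q(g_2)\bigr),$$
so $\|d^1q(g_1,g_2)\|$ is exactly the quantity controlled in the definition of a quasi-cocycle. Hence $q\in\QZ(G,V)$ if and only if $d^1q\in C_b^2(G,V)$, and then $D(q)=\|d^1q\|_\infty$. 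Since $d^2d^1=0$ identically, for such $q$ the cochain $d^1q$ lies in $Z_b^2(G,V)$, and we define $\delta(q)=[d^1q]\in H_b^2(G,V)$; linearity of $\delta$ is inherited from that of $d^1$. (The countability hypothesis is not actually used in this argument; it is inherited from the cited sources.)

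Next I would check exactness at the first two terms. Exactness at $\ell^\infty(G,V)+Z^1(G,V)$ is just injectivity of the inclusion, which is obvious. For exactness at $\QZ(G,V)$: if $q=b+z$ with $b\in\ell^\infty(G,V)$ and $z\in Z^1(G,V)$, then $d^1q=d^1b\in B_b^2(G,V)$, so $\delta(q)=0$; conversely, if $\delta(q)=0$ then $d^1q=d^1b$ for some $b\in C_b^1(G,V)=\ell^\infty(G,V)$, so $q-b\in\Ker d^1=Z^1(G,V)$ and therefore $q=b+(q-b)\in\ell^\infty(G,V)+Z^1(G,V)$. This gives $\Ker\delta=\ell^\infty(G,V)+Z^1(G,V)$.

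Finally, for exactness at $H_b^2(G,V)$ I would argue as follows. For $q\in\QZ(G,V)$, the class $c(\delta(q))$ is represented in $H^2(G,V)$ by $d^1q$, which is the ordinary coboundary of the (possibly unbounded) $1$-cochain $q$, hence $c\circ\delta=0$ and $\im\delta\subseteq\Ker c$. Conversely, given $\alpha\in\Ker c$, choose a representing cocycle $\omega\in Z_b^2(G,V)$; since $c(\alpha)=0$, we have $\omega=d^1q$ for some $1$-cochain $q\colon G\to V$, not necessarily bounded. Then $\|d^1q\|_\infty=\|\omega\|_\infty<\infty$, so by the identification above $q\in\QZ(G,V)$ with $D(q)\le\|\omega\|_\infty$, and $\delta(q)=[\omega]=\alpha$. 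Thus $\Ker c=\im\delta$, and the sequence is exact. There is no serious obstacle here: the whole proof is a diagram chase, and the only point that deserves care is the lifting step — recognizing that an ordinary cobounding $1$-cochain of a \emph{bounded} $2$-cocycle is automatically a quasi-cocycle — which is precisely what the displayed formula for $d^1$ makes transparent.
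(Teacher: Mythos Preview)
Your proof is correct and follows essentially the same approach as the paper: identify $\QZ(G,V)$ with the $1$-cochains having bounded coboundary, define $\delta$ via $d^1$, and then do the evident diagram chase to verify exactness at each node. Your write-up is in fact slightly more explicit than the paper's sketch (you spell out the injectivity of the first map and the inclusion $\im\delta\subseteq\Ker c$, and you note that countability plays no role), but there is no substantive difference in method.
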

\begin{proof}
We can identify $\QZ(G, V)$ with the subspace of $1$-cochains $q$ for which $d^1 q$ is uniformly bounded, that is $d^1q\in C_b^{2}(G, V)$. Since $d^2\circ d^1\equiv0$, $d^1 q$ is in fact a bounded 2-cocycle. Let $\delta\colon \QZ(G, V)\to H^2_b(G, V)$ denote the composition of $d^1$ and the natural quotient map $Z_b^{2}(G, V)\to H_b^{2}(G, V)$. Then $\delta q$ represents a trivial element of $H^2_b(G,V)$ if and only if $d^1 q=d^1 p$ for some bounded cochain $p$, which means $p\in \ell^{\infty}(G,V)$ and $q-p\in Z^1(G,V)$. Further if $q$ is a bounded 2-cocycle and $[q]_b\colon = q+B^2_b(G,V)\in H_b^2(G, V)$ is in the kernel of $c$, then $q=d^1 f$ for some 1-cochain $f$, which means $f\in\QZ(G, V)$ and $\delta f=[q]_b$.
\end{proof}

\section{Separating cosets}\label{sepco}

Throughout this section, we denote by $G$ a group with hyperbolically embedded collection of subgroups $\Hl\h G$. Let $X$ denote a subset of $G$ such that $\Hl\h (G,X)$. We also keep the notation $\mathcal H$ and $\G $ introduced in the previous section. By $\dxh$ we denote the word metric on $G$ with respect to the subset $X\sqcup \mathcal H$. By a coset of a subgroup we always mean a left coset.

We begin by introducing the notion of a separating coset for a pair of elements $f,g\in G$, which plays a crucial role in our construction.

\begin{defn}\label{sepcosdef}
We say that a path $p$ in $\G $ penetrates a coset $xH_\lambda$ for some $\lambda\in \Lambda $ if $p$ decomposes as $p_1ap_2$, where $p_1, p_2$ are possibly trivial, $(p_1)_+\in xH_\lambda$, and $a$ is an $H_\lambda$-component of $p$. If, in addition, $\dl (a_-, a_+)> 3C$,  where $C$ is the constant from Lemma \ref{C}, we say that $p$  \emph{essentially penetrates} $xH_\lambda$. Note that if $p$ is geodesic, it penetrates every coset of $H_\lambda$ at most once; in this case the vertices $a_-$ and $a_+$ are called the \emph{entrance and the exit points of $p$ in $xH_\lambda$} and are denoted by $p_{in} (xH_\lambda)$ and $p_{out}(xH_\lambda)$, respectively.

Given two elements $f,g\in G$, we denote by $\mathcal G(f,g)$ the set of all geodesics in $\G$ going from $f$ to $g$. Further we say that a coset $xH_\lambda $ is \emph{$(f,g)$-separating} if there exists a geodesic $p\in \mathcal G (f,g)$ that essentially penetrates $xH_\lambda $. For technical reasons we will also say $xH_\lambda$ is $(f, g)$-separating whenever $f$ and $g$ are both elements of $xH_\lambda$ and $f\ne g$; in this case we say $xH_\lambda$ is \emph{trivially $(f,g)$-separating}.  The set of all $(f,g)$-separating cosets of $H_\lambda$ is denoted by $S_\lambda (f,g)$.
\end{defn}

The following lemma immediately follows from the definition and the facts that if $f,g,h\in G$ and  $p\in \mathcal G(f,g)$, then $p^{-1}\in \mathcal G(g,f)$ and $hp\in \mathcal G(hf,hg)$.

\begin{lem}\label{Sfg}
For any $f,g,h\in G$ and any $\lambda \in \Lambda $, the following holds.
\begin{enumerate}
\item[(a)] $S_\lambda (f,g)=S_\lambda(g,f)$.
\item[(b)] $S_\lambda (hf,hg)=\{ hxH_\lambda \mid xH_\lambda \in S_\lambda (f,g)\}.$
\end{enumerate}
\end{lem}

The terminology in Definition \ref{sepcosdef} is justified by the first claim of following.

\begin{lem}\label{sep}
For any $\lambda \in \Lambda $, any $f,g\in G$ such that $f^{-1}g\notin H_\lambda $, and any $(f,g)$-separating coset $xH_\lambda $, the following hold.
\begin{enumerate}
\item[(a)] Every path in $\G$ connecting $f$ to $g$ and composed of at most $2$ geodesics penetrates $xH_\lambda$.
\item[(b)] For any $p,q\in \mathcal G(f,g)$, we have
$$
\dl (p_{in} (xH_\lambda ),q_{in} (xH_\lambda ))\le 3C
$$
and
$$
\dl (p_{out} (xH_\lambda ),q_{out} (xH_\lambda ))\le 3C.
$$
\end{enumerate}
\end{lem}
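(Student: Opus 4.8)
The plan is to reduce both parts to Lemma~\ref{C}, applied to a geodesic polygon with at most three sides carrying a distinguished $H_\lambda$-component or $H_\lambda$-edge; the bound $Cn\le 3C$ it produces then clashes with the inequality $\dl>3C$ that defines essential penetration in Definition~\ref{sepcosdef}. Two elementary facts, both instances of the standard ``shortcut'' argument, would be proved first. (i)~A geodesic of $\G$ has no two distinct connected $H_\lambda$-components: if $u,v$ are vertices of a geodesic $p$ lying in one left coset of $H_\lambda$ and the subpath of $p$ between them has length $\ge 2$, replacing it by the edge labelled $u^{-1}v$ (which exists, every nontrivial element of $H_\lambda$ being a generator) contradicts geodesy; in particular every $H_\lambda$-component of a geodesic is a single edge. (ii)~If $p\in\mathcal G(f,g)$ penetrates $yH_\lambda$ and $f\in yH_\lambda$, then $p_{in}(yH_\lambda)=f$ --- otherwise the edge from $f$ to $p_{in}(yH_\lambda)$ would shorten $p$ or violate the maximality of the penetrating component of $p$ --- and symmetrically for $g$ and $p_{out}$.

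For part~(a): since $f^{-1}g\notin H_\lambda$, the coset $xH_\lambda$ is not trivially $(f,g)$-separating, so some $r\in\mathcal G(f,g)$ essentially penetrates it; write $r=r_1ar_2$ with $a$ an $H_\lambda$-component, $a_-,a_+\in xH_\lambda$, and $\dl(a_-,a_+)>3C$. Let $s$ be a path from $f$ to $g$ made of at most two geodesics, and suppose for contradiction that $s$ does not penetrate $xH_\lambda$. Form the closed path $P=rs^{-1}$, a geodesic $n$-gon with $n\le 3$. First one checks that $a$ is an $H_\lambda$-component of $P$: an edge of $P$ adjacent to $a$ and lying in $r$ is non-$H_\lambda$ by maximality of $a$ in $r$, while $a$ can be adjacent in $P$ to an edge of $s^{-1}$ only when $r_1$ (resp.\ $r_2$) is trivial, i.e.\ when $a_-=f$ (resp.\ $a_+=g$) lies in $xH_\lambda$, and then an $H_\lambda$-edge of $s$ at that vertex would already exhibit $s$ penetrating $xH_\lambda$, contrary to hypothesis. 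If $a$ were isolated in $P$, Lemma~\ref{C} would give $\dl(a_-,a_+)\le Cn\le 3C$, a contradiction; hence $a$ is connected in $P$ to some other component $b$, and every vertex of $b$ lies in $xH_\lambda$. By (i), $b$ cannot lie inside $r$; and if $b$ straddled the seam of $P$ at $f$ or at $g$, that vertex would lie in $xH_\lambda$, hence by (ii) would equal $a_-$ or $a_+$, so $b$ would share an edge with $a$ and therefore equal $a$ --- impossible. Thus all edges of $b$ lie on $s^{-1}$, and extending $b^{-1}$ to a maximal $H_\lambda$-subpath of $s$ produces an $H_\lambda$-component of $s$ with both endpoints in $xH_\lambda$, so $s$ penetrates $xH_\lambda$: contradiction.

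For part~(b): applying part~(a) to the geodesics $p$ and $q$ shows each of them penetrates $xH_\lambda$, necessarily exactly once, so $p_{in}(xH_\lambda),p_{out}(xH_\lambda),q_{in}(xH_\lambda),q_{out}(xH_\lambda)$ are defined. Since $f^{-1}g\notin H_\lambda$, at most one of $f,g$ lies in $xH_\lambda$; if $f\in xH_\lambda$ then (ii) gives $p_{in}(xH_\lambda)=q_{in}(xH_\lambda)=f$ and the first inequality is trivial, and symmetrically $g\in xH_\lambda$ settles the second. So assume $f\notin xH_\lambda$ and prove the ``entrance'' bound, the ``exit'' one being symmetric under $g\notin xH_\lambda$. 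Write $p=p'ap''$ and $q=q'bq''$ with $a,b$ the penetrating components; then $p'$ and $q'$ are nontrivial and their last edges are non-$H_\lambda$. If $a_-=b_-$ we are done; otherwise let $e$ be the single $H_\lambda$-edge from $b_-$ to $a_-$ and form $L=(p')^{-1}q'e$, a geodesic $n$-gon with $n\le 3$. Its edges flanking $e$ are non-$H_\lambda$, so $e$ is an $H_\lambda$-component of $L$; a component of $L$ connected to $e$ and lying inside $p'$ (resp.\ $q'$) would contradict (i) for $p$ (resp.\ $q$), and one straddling $f$ would force $f\in xH_\lambda$, excluded. Hence $e$ is isolated in $L$, and Lemma~\ref{C} gives $\dl\bigl(p_{in}(xH_\lambda),q_{in}(xH_\lambda)\bigr)=\dl(b_-,a_-)\le Cn\le 3C$.

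The main obstacle I anticipate is the bookkeeping at the seams of the two polygons: certifying that the distinguished edge or component really is an \emph{isolated} component, which amounts to excluding every location a hypothetical connected partner could occupy. The argument is calibrated by the side-count --- each polygon has at most three sides, so Lemma~\ref{C} returns precisely $3C$, the threshold hard-wired into Definition~\ref{sepcosdef}; with a larger threshold, part~(a) would no longer close.
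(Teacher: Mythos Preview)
Your proof is correct and follows essentially the same approach as the paper: for (a) you form the loop consisting of the essentially-penetrating geodesic and the given path, then use Lemma~\ref{C} and the $3C$ threshold to force the component $a$ to be connected to something on the other side; for (b) you form the geodesic triangle $p_1eq_1^{-1}$ (your $(p')^{-1}q'e$) and apply Lemma~\ref{C} to the edge $e$. The only difference is that you spell out the verifications (that $a$ is a component of the loop, that $e$ is isolated, the cases $f\in xH_\lambda$ or $g\in xH_\lambda$) which the paper leaves implicit.
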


\begin{figure}
 \centering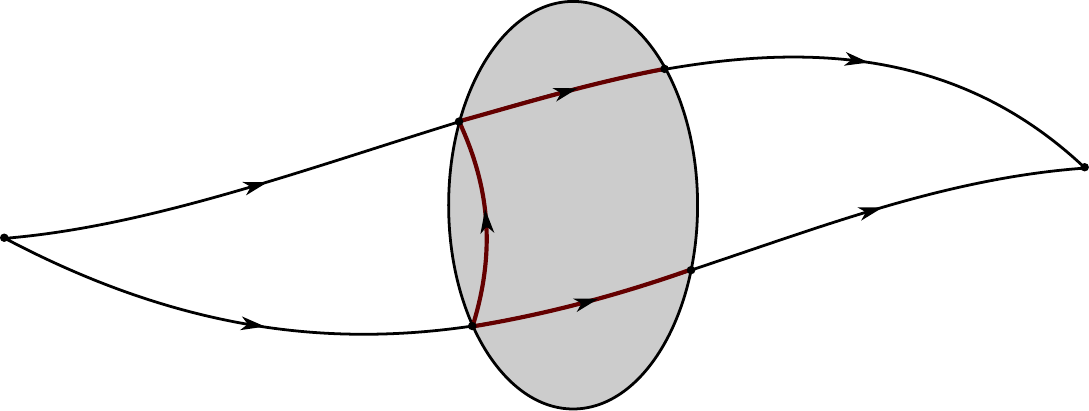\\
  \caption{}\label{fig3}
\end{figure}

\begin{proof}
Let $xH_\lambda \in S_\lambda (f,g)$ be $(f,g)$-separating coset. Since $f^{-1}g\notin H_\lambda$, $xH_\lambda $ is non-trivially separating. Thus there exists a geodesic $p\in \mathcal G(f,g)$ that essentially penetrates $xH_\lambda $; let $a$ denote the corresponding $H_\lambda $-component of $p$. Let $r$ be any other path in $\G$ connecting $f$ to $g$ and composed of at most $2$ geodesics. If $a$ is isolated in the loop $pr^{-1}$, we obtain $\dl (a_-, a_+)\le 3C$ by Lemma \ref{C}. This contradicts the assumption that $p$ essentially penetrates $xH_\lambda $. Hence $a$ is not isolated in $pr^{-1}$. Since $p$ is geodesic, $a$ cannot be connected to a component of $p$. Therefore $a$ is connected to a component of $r$, i.e. $r$ penetrates $xH_\lambda $.

Further let $p,q\in \mathcal G(a,b)$ and $xH_\lambda \in S_\lambda (f,g)$. By part (a) we have $p=p_1ap_2$ and $q=q_1bq_2$, where $(p_1)_+\in xH_\lambda$, $(q_1)_+\in xH_\lambda $ and $a$, $b$ are $H_\lambda$-components of $p$ and $q$, respectively (see Figure \ref{fig3}). (Of course, $p_i$ or $q_i$, $i=1,2$, can be trivial). Then $a$ and $b$ are connected. Let $e$ be an edge or the trivial path connecting $a_-$ to $b_-$ and labeled by a letter from $H_\lambda \setminus \{1\}$. Applying Lemma \ref{C} to the geodesic triangle $p_1eq_1^{-1}$, we obtain $\dl (e_-, e_+) \le 3C $, which gives us the first inequality in (b). The proof of the second inequality is symmetric.
\end{proof}

\begin{cor}\label{Sfin}
For any $f,g\in G$ and any $\lambda \in \Lambda $, we have $|S_\lambda (f,g)|\le \dxh (f,g)$. In particular, $S_\lambda (f,g)$ is finite.
\end{cor}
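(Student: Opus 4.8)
\section*{Proof proposal}

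The plan is to fix a single geodesic $p\in\mathcal G(f,g)$ in $\G$ and to show that every $(f,g)$-separating coset of $H_\lambda$ corresponds, injectively, to an $H_\lambda$-component of $p$. Since distinct $H_\lambda$-components of $p$ are edge-disjoint non-trivial subpaths, this bounds $|S_\lambda(f,g)|$ by the number of edges of $p$, which is exactly $\dxh(f,g)$. (Here $\dxh(f,g)<\infty$ because $G$ is generated by $X$ together with the $H_\mu$'s, so $\G$ is connected and a geodesic exists; this also yields the ``in particular'' assertion.)

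The heart of the argument is the case $f^{-1}g\notin H_\lambda$. In this case no coset of $H_\lambda$ is trivially $(f,g)$-separating, since that would force $f,g\in xH_\lambda$ and hence $f^{-1}g\in H_\lambda$. Therefore every $xH_\lambda\in S_\lambda(f,g)$ is essentially penetrated by some geodesic, and Lemma~\ref{sep}(a) applies: the fixed geodesic $p$ itself penetrates every coset in $S_\lambda(f,g)$. Since $p$ is geodesic it penetrates each coset of $H_\lambda$ at most once, so there is a well-defined map from $S_\lambda(f,g)$ to the set of $H_\lambda$-components of $p$, sending $xH_\lambda$ to the component $a$ with $a_-\in xH_\lambda$. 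This map is injective, because $a_-$ determines the coset $xH_\lambda=a_-H_\lambda$. Finally, distinct $H_\lambda$-components of $p$ are pairwise edge-disjoint (the union of two overlapping $H_\lambda$-subpaths would be a strictly longer $H_\lambda$-subpath, contradicting maximality), and each contains at least one edge; hence their number is at most the length of $p$, that is, $\dxh(f,g)$.

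It remains to dispose of the degenerate cases. If $f=g$, then no geodesic essentially penetrates anything and no coset is trivially separating, so $S_\lambda(f,g)=\emptyset$ and the inequality is trivial. If $f\ne g$ but $f^{-1}g\in H_\lambda$, then $f^{-1}g$ is a single non-trivial letter of $X\sqcup\mathcal H$, so $\dxh(f,g)=1$; on the other hand $fH_\lambda=gH_\lambda$ is trivially $(f,g)$-separating, and any geodesic from $f$ to $g$ has length $1$, so if such a geodesic essentially penetrates some $xH_\lambda$ it must be a single $H_\lambda$-component running from $f$ to $g$, forcing $xH_\lambda=fH_\lambda$. Hence $S_\lambda(f,g)=\{fH_\lambda\}$ and the inequality again holds.

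The only real obstacle is conceptual rather than computational: one must use Lemma~\ref{sep}(a) to guarantee that a \emph{single} chosen geodesic simultaneously penetrates \emph{all} separating cosets, whereas a priori each separating coset is only witnessed by a geodesic of its own; once this is in hand, the counting via components is immediate, and the remaining verifications (edge-disjointness of components, injectivity of the coset-to-component assignment) are routine.
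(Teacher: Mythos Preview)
Your proof is correct and is precisely the argument the paper intends: the corollary is stated immediately after Lemma~\ref{sep} with no proof, and your use of Lemma~\ref{sep}(a) to see that a single fixed geodesic penetrates every separating coset, followed by the trivial counting of $H_\lambda$-components, is exactly the point. Your careful treatment of the degenerate case $f^{-1}g\in H_\lambda$ (where Lemma~\ref{sep} does not apply) is a nice bonus that the paper leaves implicit.
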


In this section we will use the following elementary observation several times. 

\begin{lem}\label{dist}
Let $p$ be a geodesic in $\G$. Suppose that $p$ penetrates a coset $xH_\lambda$. Let $p_0$ be the initial subpath of $p$ ending at $p_{in}(xH_\lambda)$. Then $\ell (p_0)=\dxh (p_-, xH_\lambda)$.
\end{lem}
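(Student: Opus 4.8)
The plan is to show the two inequalities $\ell(p_0) \le \dxh(p_-, xH_\lambda)$ and $\ell(p_0) \ge \dxh(p_-, xH_\lambda)$ separately. The second inequality is immediate: $p_0$ is a path in $\G$ from $p_-$ to the vertex $p_{in}(xH_\lambda) \in xH_\lambda$, so its length is at least the distance from $p_-$ to the coset $xH_\lambda$. For the first inequality, the key point is that $p_0$, being an initial subpath of the geodesic $p$, is itself geodesic, so $\ell(p_0) = \dxh(p_-, p_{in}(xH_\lambda))$; it therefore suffices to show that $p_{in}(xH_\lambda)$ is a closest point of $xH_\lambda$ to $p_-$, i.e. that no vertex of $xH_\lambda$ is strictly closer to $p_-$ than $p_{in}(xH_\lambda)$.

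To prove this, suppose for contradiction that some vertex $v \in xH_\lambda$ satisfies $\dxh(p_-, v) < \ell(p_0)$. Write $p = p_0 a p_2$, where $a$ is the $H_\lambda$-component of $p$ with $a_- = p_{in}(xH_\lambda)$ and $a_+ = p_{out}(xH_\lambda)$ (both vertices of $xH_\lambda$). Form a new path $r$ from $p_-$ to $p_+$ by concatenating a geodesic $s$ from $p_-$ to $v$, then an edge (or trivial path) $e$ from $v$ to $a_+$ labelled by an element of $H_\lambda \setminus\{1\}$ (such an edge exists since $v, a_+$ lie in the same coset of $H_\lambda$), then $p_2$. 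Since $\ell(s) = \dxh(p_-, v) < \ell(p_0) \le \ell(p_0) + \ell(a) - 1$ (using $\ell(e) \le 1$), we get $\ell(r) < \ell(p)$, contradicting that $p$ is geodesic. Hence no such $v$ exists, $p_{in}(xH_\lambda)$ is a closest point of $xH_\lambda$ to $p_-$, and $\ell(p_0) = \dxh(p_-, p_{in}(xH_\lambda)) = \dxh(p_-, xH_\lambda)$.

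The main thing to be careful about is the edge case where the new component $e$ might be connected to the component $a$ being replaced — but here we are not invoking an isolation argument, we are directly comparing lengths, so this is not an issue; one just needs $\ell(e) \le 1$, which holds because every nontrivial element of $H_\lambda$ is a generator. One should also check the degenerate subcase $v = a_+$, where $e$ is trivial and the argument still goes through, and the subcase where $p_2$ is trivial. I expect the only real subtlety is writing the length bookkeeping cleanly: $\ell(p) = \ell(p_0) + \ell(a) + \ell(p_2)$ with $\ell(a) \ge 1$, versus $\ell(r) \le \ell(s) + 1 + \ell(p_2)$, so $\ell(s) < \ell(p_0)$ forces $\ell(r) < \ell(p)$.
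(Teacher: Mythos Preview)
Your proof is correct and follows essentially the same approach as the paper: both establish the easy inequality $\dxh(p_-, xH_\lambda)\le \ell(p_0)$ directly, then argue the reverse by contradiction, using that any two points of $xH_\lambda$ are joined by an edge to build a shorter route from $p_-$ to $p_{out}(xH_\lambda)$ (and hence to $p_+$). The paper phrases the contradiction purely in terms of distances via the triangle inequality, while you explicitly construct the competing path $r=s\, e\, p_2$; the content is the same.
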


\begin{proof}
Clearly $\dxh (p_-, xH_\lambda)\le \ell (p_0)$. Suppose that $\dxh (p_-, xH_\lambda)< \ell (p_0)$. Since  $xH_\lambda$ has diameter $1$ with respect to the metric $\dxh$, we obtain
$$
\dxh (p_-, p_{out}(xH_\lambda))\le \dxh (p_-, xH_\lambda)+1<\ell(p_0)+1.
$$
However we obviously have $\ell(p_0)+1=\dxh (p_-, p_{out}(xH_\lambda))$. A contradiction.
\end{proof}

\begin{defn}
Given any $f,g\in G$, we define a relation $\preceq$ on the set $S_\lambda (f,g)$ as follows:
$$xH_\lambda \preceq yH_\lambda\;\;\; {\rm  iff}\;\;\; \dxh (f, xH_\lambda )\le \dxh (f, yH_\lambda ).$$
\end{defn}

The next lemma is an immediate consequence of Lemma \ref{sep} and Lemma \ref{dist}.

\begin{lem}\label{sep-ref}
For any $f,g\in G$ and any $\lambda \in \Lambda $ with $f^{-1}g\notin H_\lambda $, $\preceq$ is a linear order on $S_\lambda (f,g)$ and every geodesic $p\in \mathcal G(f,g)$ penetrates all $(f,g)$-separating cosets according to the order $\preceq$. That is, $S_\lambda (f,g)=\{ x_1H_\lambda\preceq x_2H_\lambda \preceq \ldots \preceq x_nH_\lambda \}$ for some $n\in \mathbb N$ and $p$ decomposes as $$p=p_1a_1\cdots p_na_np_{n+1},$$ where $a_i$ is an $H_\lambda $-component of $p$ and $(p_i)_+\in x_iH_\lambda $ for $i=1, \ldots , n$ (see Fig. \ref{fig1}).
\end{lem}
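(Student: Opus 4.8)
The plan is to first establish that $\preceq$ is a linear order and then extract the claimed decomposition of an arbitrary geodesic $p \in \mathcal G(f,g)$. For the first part, since $f^{-1}g \notin H_\lambda$, every $(f,g)$-separating coset is non-trivially separating, so by Lemma \ref{sep}(a) every geodesic $p \in \mathcal G(f,g)$ penetrates each coset $xH_\lambda \in S_\lambda(f,g)$. Fix one such geodesic $p$. For $xH_\lambda \in S_\lambda(f,g)$, let $p_0(xH_\lambda)$ be the initial subpath of $p$ ending at $p_{in}(xH_\lambda)$; by Lemma \ref{dist}, $\ell(p_0(xH_\lambda)) = \dxh(f, xH_\lambda)$. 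Thus $\dxh(f, xH_\lambda)$ is exactly the distance along $p$ at which $p$ enters $xH_\lambda$. Since $p$ is geodesic it penetrates each coset of $H_\lambda$ at most once (as recorded in Definition \ref{sepcosdef}), and two distinct cosets $xH_\lambda \neq yH_\lambda$ cannot be entered at the same parameter value: entering $xH_\lambda$ at position $t$ means the vertex of $p$ at position $t$ lies in $xH_\lambda$, and if the same held for $yH_\lambda$ the two cosets would coincide. Hence the map $xH_\lambda \mapsto \dxh(f, xH_\lambda)$ is injective on $S_\lambda(f,g)$, so $\preceq$ is a linear order.

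For the decomposition, order $S_\lambda(f,g) = \{x_1 H_\lambda \preceq x_2 H_\lambda \preceq \cdots \preceq x_n H_\lambda\}$ (finiteness is Corollary \ref{Sfin}). Take any geodesic $p \in \mathcal G(f,g)$. It penetrates each $x_i H_\lambda$, say $p = p_1^{(i)} a_i p_2^{(i)}$ with $(p_1^{(i)})_+ \in x_i H_\lambda$ and $a_i$ an $H_\lambda$-component of $p$. By the observation above, the component $a_i$ sits at distance $\dxh(f, x_i H_\lambda)$ from $f$ along $p$, and since these distances are strictly increasing in $i$, the components $a_1, \dots, a_n$ occur along $p$ in this order and are pairwise disjoint (they are genuinely distinct components because their entrance vertices lie in distinct cosets). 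Cutting $p$ at these components yields the desired decomposition $p = p_1 a_1 p_2 a_2 \cdots p_n a_n p_{n+1}$ with $(p_i)_+ \in x_i H_\lambda$ for $i = 1, \dots, n$, where $p_1$ is the initial segment up to $a_1$, each $p_i$ for $2 \le i \le n$ is the segment between $a_{i-1}$ and $a_i$, and $p_{n+1}$ is the terminal segment. This is precisely the assertion of the lemma.

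The only mild subtlety — and the step I would be most careful about — is confirming that the $a_i$ arising as penetration components for different $i$ are in fact distinct components of the single path $p$ and appear in the prescribed order, rather than, say, one component of $p$ serving as the penetration component for two different cosets. This is ruled out by Lemma \ref{dist} together with the diameter-$1$ property of cosets in the metric $\dxh$: a fixed component $a$ of $p$ has all of its vertices in a single coset $xH_\lambda$, so it can only witness penetration of that one coset, and the entry parameters $\dxh(f, x_iH_\lambda)$ being distinct forces the components to be distinct and linearly ordered along $p$. Everything else is bookkeeping, so once this point is nailed down the lemma follows immediately from Lemmas \ref{sep} and \ref{dist} as claimed.
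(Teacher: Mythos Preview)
Your proposal is correct and is exactly the argument the paper has in mind: the paper simply states that the lemma is an immediate consequence of Lemma~\ref{sep} and Lemma~\ref{dist}, and you have spelled out precisely those details. The one point you flag as a subtlety---that distinct separating cosets yield distinct, linearly ordered components of $p$---is handled just as you indicate, since the entrance vertex $p_{in}(xH_\lambda)$ lies in $xH_\lambda$ and two left cosets with a common element coincide.
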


\begin{figure}
 \centering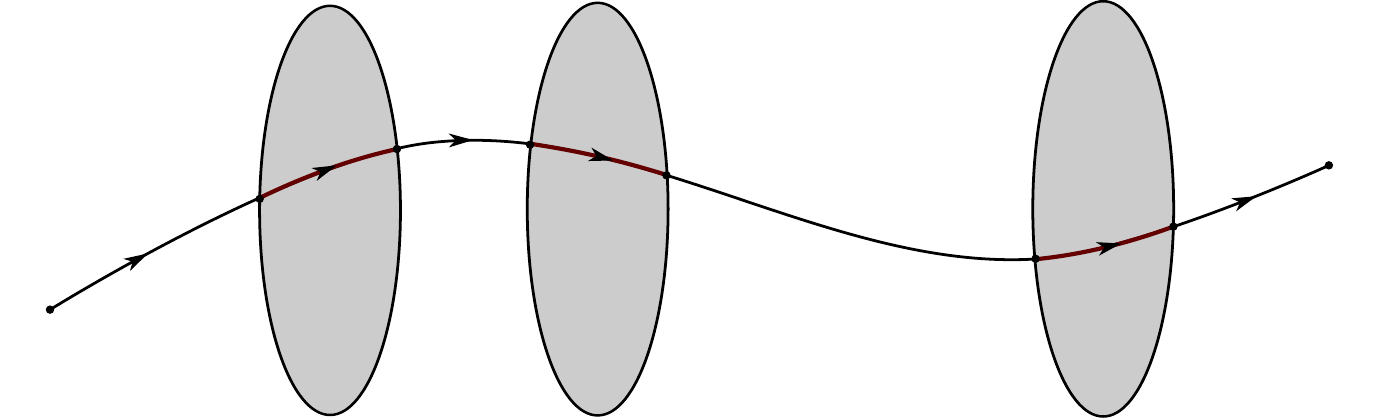\\
  \caption{}\label{fig1}
\end{figure}

Given $f,g\in G$ and $xH_\lambda \in S_\lambda (f,g)$, we denote by $E(f, g; xH_{\lambda})$ the set of ordered pairs of entrance-exit points of geodesics from $\mathcal G(f,g)$ in the coset $xH_{\lambda}$. That is,
$$
E(f, g; xH_{\lambda})=\{ (p_{in}(xH_\lambda), p_{out}(xH_\lambda))\mid p\in \mathcal G(f,g)\}.
$$

\begin{lem}\label{Efg}
For any $\lambda \in \Lambda $ and any $f,g,h,x\in G$, the following hold.
\begin{enumerate}
\item[(a)] $E(g,f; xH_\lambda )= \{ (v,u) \mid (u,v)\in E(f, g; xH_{\lambda})\} $.
\item[(b)] $E(hf,hg; xH_\lambda)= \{ (hu,hv) \mid (u,v)\in E(f, g; xH_{\lambda})\} $.
\item[(c)] $|E(f, g; xH_{\lambda})|<\infty $.
\end{enumerate}
\end{lem}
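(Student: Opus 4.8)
The plan is to derive parts (a) and (b) directly from the geometric symmetries of geodesics that were already used in Lemma \ref{Sfg}, and then obtain part (c) as a consequence of the local finiteness of $H_\lambda$ in the relative metric $\dl$ together with the bound in Lemma \ref{sep}(b). First I would record the two elementary facts about $\mathcal G(f,g)$ that drive everything: if $p\in\mathcal G(f,g)$ then $p^{-1}\in\mathcal G(g,f)$, and if $p\in\mathcal G(f,g)$ then $hp\in\mathcal G(hf,hg)$. For (a): a geodesic $p=p_1ap_2$ penetrating $xH_\lambda$ with entrance point $a_-$ and exit point $a_+$ has reverse $p^{-1}=p_2^{-1}a^{-1}p_1^{-1}$, which penetrates the same coset $xH_\lambda$ (the component $a^{-1}$ has the same endpoints, with roles swapped), so $(p^{-1})_{in}(xH_\lambda)=p_{out}(xH_\lambda)$ and $(p^{-1})_{out}(xH_\lambda)=p_{in}(xH_\lambda)$. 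Since $p\mapsto p^{-1}$ is a bijection $\mathcal G(f,g)\to\mathcal G(g,f)$, this gives the stated equality of sets. For (b): if $p\in\mathcal G(f,g)$ penetrates $xH_\lambda$ at $(u,v)$, then $hp\in\mathcal G(hf,hg)$ penetrates $hxH_\lambda$ at $(hu,hv)$ because multiplication by $h$ is a label-preserving graph automorphism of $\G$ (it maps $H_\lambda$-components to $H_\lambda$-components and the coset $xH_\lambda$ to $hxH_\lambda$); again $p\mapsto hp$ is a bijection, so we get the claimed description of $E(hf,hg;hxH_\lambda)$, which is the content of (b) after reindexing the coset.

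The substance is part (c). The key point: by Lemma \ref{sep}(b), if $f^{-1}g\notin H_\lambda$ then all entrance points $p_{in}(xH_\lambda)$ of geodesics $p\in\mathcal G(f,g)$ lie within $\dl$-distance $3C$ of one another, and likewise all exit points lie within $\dl$-distance $3C$ of one another. Fix one geodesic $p_0\in\mathcal G(f,g)$ with entrance-exit pair $(u_0,v_0)$; then every pair $(u,v)\in E(f,g;xH_\lambda)$ satisfies $\dl(u,u_0)\le 3C$ and $\dl(v,v_0)\le 3C$. Since $u,u_0$ both lie in the coset $xH_\lambda$, we have $u_0^{-1}u\in H_\lambda$ and $\dl(u_0^{-1}u,1)=\dl(u_0,u)\le 3C$; by Lemma \ref{relmet} the ball of $\dl$-radius $3C$ in $(H_\lambda,\dl)$ is finite (equivalently, it is a finite ball in the word metric $\d_{Y_\lambda}$), so there are only finitely many possibilities for $u$, and symmetrically finitely many for $v$. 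Hence $|E(f,g;xH_\lambda)|<\infty$. The remaining case is $f^{-1}g\in H_\lambda$: then $xH_\lambda$ is $(f,g)$-separating only trivially, i.e. $xH_\lambda=fH_\lambda=gH_\lambda$ with $f\ne g$; in this situation the unique geodesic from $f$ to $g$ penetrating this coset is the single $H_\lambda$-edge from $f$ to $g$ (every nontrivial element of $H_\lambda$ is a generator, so $\dxh(f,g)=1$), and $E(f,g;xH_\lambda)=\{(f,g)\}$, which is trivially finite. (One should also dispose of the degenerate possibility that $E(f,g;xH_\lambda)$ is empty, e.g. if $xH_\lambda\notin S_\lambda(f,g)$, in which case the claim is vacuous.)

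I expect the main obstacle to be purely bookkeeping rather than conceptual: one must be careful that "the entrance point" and "the exit point" are genuinely well-defined for a geodesic $p$ that penetrates $xH_\lambda$ — this uses the observation in Definition \ref{sepcosdef} that a geodesic penetrates a given coset of $H_\lambda$ at most once — and one must check that the bijections $p\mapsto p^{-1}$ and $p\mapsto hp$ really do intertwine the decompositions $p=p_1ap_2$ in the way claimed, including the edge cases where $p_1$ or $p_2$ is trivial. Once the definitions are unwound, (a) and (b) are immediate and (c) is a one-line application of Lemma \ref{sep}(b) plus the local finiteness built into the definition of \he subgroup (via Lemma \ref{relmet}).
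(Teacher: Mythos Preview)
Your proposal is correct and follows essentially the same approach as the paper: parts (a) and (b) come from the bijections $p\mapsto p^{-1}$ and $p\mapsto hp$ on geodesics (the paper just cites Lemma \ref{Sfg} rather than spelling this out), and part (c) is exactly the paper's argument---the trivially separating case gives $E=\{(f,g)\}$, and otherwise Lemma \ref{sep}(b) confines all entrance points (resp.\ exit points) to a $\dl$-ball of radius $3C$, which is finite by local finiteness of $(H_\lambda,\dl)$. One small remark: local finiteness is part of the \emph{definition} of a hyperbolically embedded collection, so you need not invoke Lemma \ref{relmet}; and your observation that (b) naturally concerns $hxH_\lambda$ rather than $xH_\lambda$ is well taken---this is how the statement is actually used in Lemma \ref{Eavfg}(b).
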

\begin{proof}
Parts (a) and (b) follow immediately from Lemma \ref{Sfg}. To prove (c), note that if $xH_\lambda$ trivially separates $f$ and $g$, then $E(f, g; xH_{\lambda})=\{ (f,g)\}$. Further if $xH_\lambda$ separates $f$ and $g$ non-trivially, fix any $(u,v)\in E(f, g; xH_{\lambda})$. Then for any other $(u^\prime, v^\prime)\in E(f, g; xH_{\lambda})$, we have $\dl (u,u^\prime)<3C$ and $\dl (v, v^\prime)<3C$ by part (b) of Lemma \ref{sep}. Recall that $(H_\lambda , \dl)$ is a locally finite metric space by the definition of a hyperbolically embedded collection of subgroups. Hence $|E(f, g; xH_{\lambda})|<\infty $.
\end{proof}

The main result of this section is the following.

\begin{lem}\label{fgh}
For any $f,g,h\in G$ and any $\lambda \in \Lambda $, the set of all $(f,g)$-separating cosets of $H_\lambda$ can be decomposed as $$S_\lambda (f,g)=S^{\prime}\sqcup S^{\prime\prime}\sqcup F,$$
where
\begin{enumerate}
\item[(a)] $S^\prime\subseteq S_\lambda (f,h)\setminus  S_\lambda (h,g)$ and for every $xH_\lambda \in S^\prime$ we have $E (f,g; xH_\lambda )= E (f,h; xH_\lambda)$.
\item[(b)] $S^{\prime\prime}\subseteq S_\lambda (h,g)\setminus  S_\lambda (f,h)$ and for every $xH_\lambda \in S^{\prime\prime}$ we have $E(f,g; xH_\lambda )= E (h,g; xH_\lambda).$
\item[(c)] $|F|\le 2$.
\end{enumerate}
\end{lem}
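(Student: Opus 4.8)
The plan is to analyze how a geodesic $p \in \mathcal{G}(f,g)$ interacts with geodesics $p' \in \mathcal{G}(f,h)$ and $p'' \in \mathcal{G}(h,g)$, using the concatenation $p'p''$ (a path from $f$ to $g$ composed of two geodesics) together with Lemma~\ref{sep}(a). The idea is that an $(f,g)$-separating coset $xH_\lambda$ is essentially penetrated by $p$, hence by the ``up-to-$3C$'' estimates in Lemma~\ref{sep}(b) and Lemma~\ref{C}, the component of $p$ in $xH_\lambda$ is not isolated in the loop $p(p'p'')^{-1}$; since $p$ is geodesic, that component must be connected to a component of $p'$ or of $p''$. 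This should let us sort each $xH_\lambda \in S_\lambda(f,g)$ according to whether it is ``captured'' by the $p'$-part, by the $p''$-part, or sits near the transition vertex $h$.

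Concretely, I would first dispose of trivially separating cosets and of the degenerate cases $f^{-1}h \in H_\lambda$ or $h^{-1}g \in H_\lambda$, which contribute a bounded number of cosets to the exceptional set $F$. Then, assuming all relevant differences lie outside $H_\lambda$, I would use Lemma~\ref{sep-ref} to write $p'$ and $p''$ as alternating decompositions through the cosets of $S_\lambda(f,h)$ and $S_\lambda(h,g)$ in order. Given $xH_\lambda \in S_\lambda(f,g)$ non-trivially separating, apply Lemma~\ref{sep}(a) to the two-geodesic path $p'p''$: it penetrates $xH_\lambda$, so its penetrating component lies inside either $p'$ or $p''$, forcing $xH_\lambda \in S_\lambda(f,h)$ or $xH_\lambda \in S_\lambda(h,g)$ (after checking the penetration there is in fact \emph{essential}, again via Lemma~\ref{C} applied to an appropriate small polygon — this is where the constant $3C$ comes in). Define $S'$ to be those $xH_\lambda$ whose penetration comes strictly from the interior of $p'$ and which are \emph{not} in $S_\lambda(h,g)$, $S''$ symmetrically, and put into $F$ those finitely many cosets near $h$ (roughly, the last coset of $S_\lambda(f,h)$ and the first coset of $S_\lambda(h,g)$, plus possibly one where $h$ itself sits) as well as any coset that would lie in both $S_\lambda(f,h)$ and $S_\lambda(h,g)$; a careful count using Lemma~\ref{dist} and the linear order $\preceq$ should bound $|F|$ by $2$.

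For the equality $E(f,g;xH_\lambda) = E(f,h;xH_\lambda)$ when $xH_\lambda \in S'$: the inclusion ideas go both ways. Given $p \in \mathcal{G}(f,g)$ and $p' \in \mathcal{G}(f,h)$, since $xH_\lambda \notin S_\lambda(h,g)$ but is essentially penetrated by $p$, the exit point $p_{out}(xH_\lambda)$ together with the initial segment of $p$ up to it must ``match'' a geodesic from $f$ that stays within the $f$-side; the point is that the portion of $p$ before exiting $xH_\lambda$ is a geodesic from $f$ to $p_{out}(xH_\lambda)$ that can be completed by a geodesic to $h$ avoiding any further essential penetration of $xH_\lambda$ (since $xH_\lambda\notin S_\lambda(h,g)$), so $(p_{in}(xH_\lambda),p_{out}(xH_\lambda)) \in E(f,h;xH_\lambda)$; conversely any entrance-exit pair realized from $f$ to $h$ can be extended through $p''$ to $g$ without creating a new essential penetration of $xH_\lambda$, giving the reverse inclusion. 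The symmetric argument handles $S''$.

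The main obstacle I anticipate is the bookkeeping around $F$ and the matching of entrance-exit point \emph{sets} (not just the separating-coset membership): one must be careful that extending a geodesic past $xH_\lambda$ does not accidentally turn a non-essential penetration into an essential one or vice versa, and that the three-way split is genuinely disjoint. The quantitative heart is repeated application of Lemma~\ref{C} to triangles and quadrilaterals formed from sub-geodesics of $p$, $p'$, $p''$ together with the single $H_\lambda$-edges connecting paired components, so that every ``error term'' that appears is $\le 3C$ and hence does not affect whether a penetration is essential (which was defined with the threshold $3C$ precisely to absorb these errors). Keeping track of these $\le 3C$ slippages while simultaneously controlling the linear order $\preceq$ near $h$ is the delicate part; once that is set up, assigning each coset to exactly one of $S'$, $S''$, $F$ and verifying $|F| \le 2$ should be a finite case analysis.
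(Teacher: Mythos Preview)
Your general framework---apply Lemma~\ref{sep}(a) to the two-geodesic path $rq$ with $r\in\mathcal G(f,h)$, $q\in\mathcal G(h,g)$ and sort the cosets of $S_\lambda(f,g)$ according to which of $r,q$ penetrates them---matches the paper's, but the two key technical mechanisms are missing, and without them neither (c) nor the $E$-set equality in (a),(b) goes through as you describe. Your argument for $E(f,g;xH_\lambda)=E(f,h;xH_\lambda)$ is not correct: taking the initial segment of $p\in\mathcal G(f,g)$ up to $p_{out}(xH_\lambda)$ and ``completing by a geodesic to $h$'' does not in general produce an element of $\mathcal G(f,h)$---there is no reason for $\dxh(f,p_{out}(xH_\lambda))+\dxh(p_{out}(xH_\lambda),h)=\dxh(f,h)$, and moreover the appended geodesic may itself penetrate $xH_\lambda$ and alter the entry/exit pair. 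Likewise, knowing that $r$ penetrates $xH_\lambda$ does \emph{not} imply it essentially penetrates, so your inference $xH_\lambda\in S_\lambda(f,h)$ is unjustified at that step; no Lemma~\ref{C} polygon estimate produces the strict inequality $>3C$ needed there.

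The paper's device, which resolves both issues at once, is to single out a \emph{pivot coset}: let $x_iH_\lambda$ be the $\preceq$-maximal element of $S_\lambda(f,g)$ penetrated by $r$, set $F=\{x_iH_\lambda,x_{i+1}H_\lambda\}$ (so $|F|\le 2$ is immediate, by definition), and for $j<i$ perform a tail-swap at the pivot. Writing $p=p_1a_1p_2a_2p_3$ and $r=r_1br_2$ with $a_2,b$ the components in $x_iH_\lambda$, one connects $(p_2)_+$ to $(r_2)_-$ by an $H_\lambda$-edge $e$ and observes via Lemma~\ref{dist} that $\ell(p_1a_1p_2)=\dxh(f,x_iH_\lambda)=\ell(r_1)$, so $t:=p_1a_1p_2\,e\,r_2$ has the same length as $r$ and hence lies in $\mathcal G(f,h)$. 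Since $t$ agrees with $p$ through $x_jH_\lambda$, this simultaneously gives essential penetration (hence $x_jH_\lambda\in S_\lambda(f,h)$) and the exact matching of entrance/exit pairs; a short length comparison at the same pivot then shows $q$ cannot penetrate $x_jH_\lambda$, ruling out $x_jH_\lambda\in S_\lambda(h,g)$. Your proposal to instead throw all cosets in $S_\lambda(f,h)\cap S_\lambda(h,g)$ into $F$ and ``carefully count'' cannot yield the bound $2$ without this pivot idea---that intersection has no a priori bound.
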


\begin{proof}
First, if $|S_\lambda (f,g)|\leq 2$ the statement is trivial, so we can assume $|S_\lambda (f,g)|>2$. Let $$S_\lambda (f,g)=\{ x_1H_\lambda \preceq x_2H_\lambda \preceq\ldots \preceq x_nH_\lambda\}.$$ We fix any geodesics $q\in \mathcal G(h,g)$ and $r\in \mathcal G(f,h)$. By the first claim of Lemma \ref{sep}, every coset from $S_\lambda (f,g)$ is penetrated by at least one of $q$, $r$. Without loss of generality we may assume that at least one of the cosets from $S_\lambda(f,g)$ is penetrated by $r$. Let $x_iH$ be the largest coset (with respect to the order $\preceq$) that is penetrated by $r$. Thus if $i<n$, then  $x_{i+1}H$ is penetrated by $q$.

Let
$$
S^{\prime} = \{ x_jH_\lambda \mid 1\le j< i\} ,
$$
$$
S^{\prime\prime } = \{ x_jH_\lambda \mid i+1< j\le n\} ,
$$
and
$$
F=S_\lambda (f,g)\setminus (S^\prime\cup S^{\prime\prime}).
$$
Obviously $|F|\le 2$. It remains to prove (a) and (b). We will prove (a) only, the proof of (b) is symmetric.

Fix any $1\le j< i$. Let $p$ be any geodesic from $\mathcal G (f,g)$. By Lemma \ref{sep-ref}, $p$ decomposes as $$ p=p_1a_1p_2a_2p_3,$$ where $a_1$, $a_2$ are $H_\lambda $-components of $p$, $(p_1)_+\in x_jH_\lambda $, and $(p_2)_+\in x_iH_\lambda $. Similarly by the choice of $i$, $r$ decomposes as $$r=r_1br_2,$$ where $b$ is an $H_\lambda $-component of $r$ and $(r_2)_-\in x_iH_\lambda $ (see Fig. \ref{fig2}).

Since $(r_2)_-$ and $(p_2)_+$ belong to the same coset of $H_\lambda$, there exists a path $e$ in $\G$ of length at most $1$ such that $e_-=(p_2)_+$ and $e_+=(r_2)_-$. By Lemma \ref{dist}, we have $\ell (p_1a_1p_2)=\ell (r_1)$.  Hence the path $t=p_1a_1p_2er_2$ has the same length as $r$, i.e., $t\in \mathcal G (f,h)$. Also,
\begin{equation}\label{in}
p_{in}(x_jH_\lambda )=t_{in}(x_jH_\lambda )
\end{equation}
and
\begin{equation}\label{out}
p_{out}(x_jH_\lambda )=t_{out}(x_jH_\lambda ).
\end{equation}

\begin{figure}
 \centering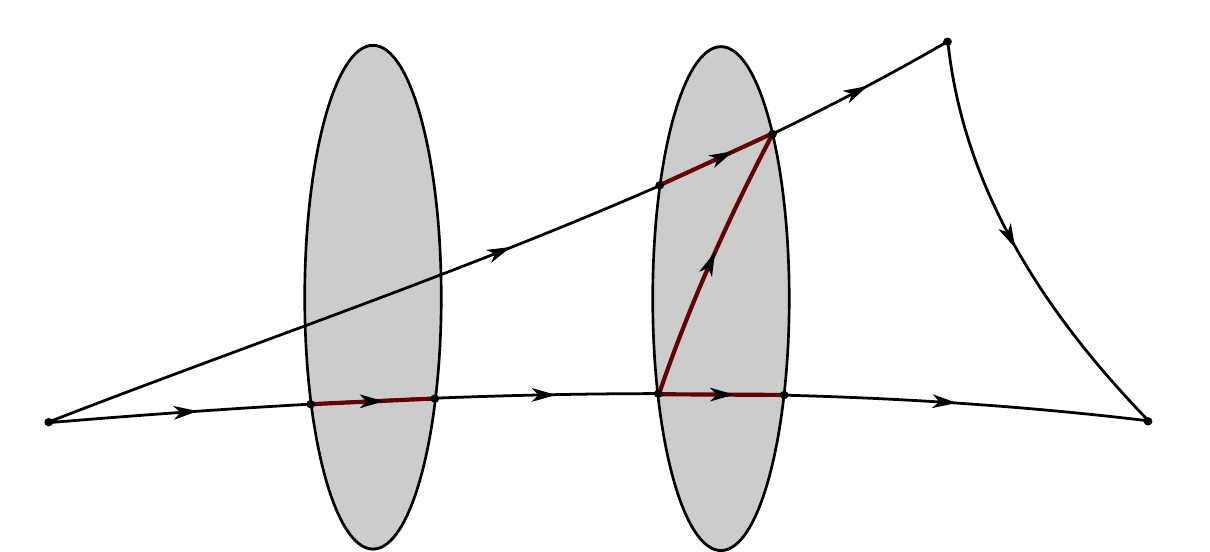\\
  \caption{}\label{fig2}
\end{figure}

So far all our arguments were valid for any $p\in\mathcal G (f,g)$. Since $x_jH_\lambda \in S_\lambda (f,g)$, there exists $p\in\mathcal G (f,g)$ that essentially penetrates $x_jH_\lambda$, i.e., $\dl ((a_1)_-, (a_1)_+)> 3C$ in the above notation. In this case $t$ also essentially penetrates $x_jH_\lambda$. Thus $x_jH\in S_\lambda (f,h)$. Moreover since we have (\ref{in}) and (\ref{out}) for every $p\in \mathcal G(f,g)$, we obtain $E (f,g; x_jH_\lambda )= E (f,h; x_jH_\lambda)$.

To complete the proof of (a) it remains to show that $x_jH_\lambda \notin S_\lambda (h,g)$. Clearly $g\notin x_jH_\lambda$, or $p$ would not be geodesic, so  $x_jH_\lambda $ does not trivially separate $g$ and $h$. Thus, if $x_jH_\lambda \in S_\lambda (h,g)$ there must be a geodesic from $h$ to $g$ which essentially penetrates $x_jH_\lambda$. Hence by Lemma \ref{sep}, every geodesic from $h$ to $g$ penetrates $x_jH_\lambda $, which means $q$ penetrates $x_jH_\lambda $. Then using Lemma \ref{dist}, the fact that every coset of $H_\lambda$ has diameter $1$ with respect to the metric $\dxh$, and the triangle inequality, we obtain
$$
\begin{array}{rl}
\ell(q) &= \dxh (h, x_jH_\lambda) +1 + \dxh (g, x_jH_\lambda) \\ & \\
&> \dxh (h, x_iH_\lambda) +1 + \dxh (g, x_iH_\lambda) \\ & \\
&\geq \ell (r_2) + \dxh ((r_2)_-, (p_3)_-) + \ell (p_3) \\ & \\
&\ge  \dxh (h,g).
\end{array}
$$
Since one of the inequalities is strict, this contradicts the assumption that $q$ is geodesic.
\end{proof}

\section{Extending quasi-cocycles}\label{ext}

We keep all assumptions and notation from the previous section. For each $\lambda \in \Lambda $, let $$\mathcal F_{\lambda}=\{h\in H_{\lambda}\;|\;h\in H_{\mu} \text{ for some } \mu\neq\lambda\}.$$ In particular, if $\Hl$ consists of a single subgroup $H$, the corresponding subset $\mathcal F=\emptyset$.

It follows from Lemma \ref{C} that every $h\in F_{\lambda}$ satisfies $\dl(1, h)\leq 2C$, where $C$ is the constant from Lemma \ref{C}. Indeed for every such $h$ there is a loop $e_1e_2$ in $\G $, where $e_1$ is an edge labeled by $h\in H_\lambda \setminus\{1\}$ and $e_2$ is an edge labeled by the copy of $h$ in $H_\mu\setminus\{1\} $ for some $\mu\in \Lambda $. Since the metric space $(H_\lambda , \dl)$ is locally finite by the definition of a hyperbolically embedded collection of subgroups, we obtain the following.

\begin{lem}\label{Ffinite}
$|\mathcal F_{\lambda}|<\infty$ for all $\lambda\in\Lambda$.
\end{lem}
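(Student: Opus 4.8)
The plan is to exploit the observation recorded immediately above the statement: every $h \in \mathcal F_\lambda$ lies within $\dl$-distance $2C$ of $1$, after which local finiteness of $(H_\lambda, \dl)$ finishes the argument at once.

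First I would make precise why $\dl(1,h) \le 2C$ for every $h \in \mathcal F_\lambda$. Fix such an $h$ and choose $\mu \ne \lambda$ with $h \in H_\mu$. Then $h$ is represented in $\G$ both by a letter $a \in H_\lambda \setminus \{1\}$ and by a letter $b \in H_\mu \setminus \{1\}$; let $e_1$ and $e_2$ be the corresponding edges from $1$ to $h$, and set $p = e_1 e_2^{-1}$, a loop based at $1$. Since each side of $p$ is a single edge, $p$ is a geodesic $2$-gon. The subpath $e_1$ is an $H_\lambda$-component of $p$, and it is isolated: the only other edge, $e_2^{-1}$, is an $H_\mu$-component, and two connected components of a path always carry the same index in $\Lambda$. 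Hence Lemma \ref{C} applies to $p$ with $n = 2$ and gives $\dl(1,h) = \dl((e_1)_-, (e_1)_+) \le 2C$.

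Next I would invoke condition (b) in the definition of a hyperbolically embedded collection: $(H_\lambda, \dl)$ is a locally finite metric space, so the ball $\{h \in H_\lambda : \dl(1,h) \le 2C\}$ is finite. By the previous step $\mathcal F_\lambda$ is contained in this ball, whence $|\mathcal F_\lambda| < \infty$, as claimed.

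This argument presents no real difficulty; the only small points requiring attention are that a concatenation of two edges qualifies as a geodesic $2$-gon, so that Lemma \ref{C} is applicable with $n = 2$, and that $e_1$ is genuinely isolated in $p$ — which is immediate, since connectedness of components forces them to lie in the same $H_\nu$.
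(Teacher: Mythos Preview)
Your proof is correct and follows exactly the argument the paper gives in the paragraph preceding the lemma: show $\dl(1,h)\le 2C$ via Lemma~\ref{C} applied to the geodesic bigon $e_1e_2^{-1}$, then conclude by local finiteness of $(H_\lambda,\dl)$. Your write-up is in fact slightly more careful than the paper's in verifying that the bigon is geodesic and that $e_1$ is isolated.
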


Also, for $q_\lambda\in\QZ (H_\lambda, U_\lambda)$, let
\begin{equation}\label{Kl}
K_\lambda =\max\{ \| q_\lambda (g)\| : \dl (1,g)<15C\}.
\end{equation}
Observe that the constant $K_\lambda $ is well-defined by local finiteness of $(H_\lambda , \dl)$.

We can now state our main extension theorem in its full generality. Recall that for a quasi-cocycle $q$, $D(q)$ denotes its defect defined by (\ref{Dq}).

\begin{thm}\label{real}
Let $G$ be a group, $\Hl$ a hyperbolically embedded collection of subgroups of $G$, $V$ a normed $G$-module. For each $\lambda \in \Lambda $, let $U_\lambda$ be an $H_\lambda $-submodule of $G$.  Then there exists a linear map $$\iota \colon \bigoplus\limits_{\lambda\in \Lambda }\QZas (H_\lambda, U_\lambda)\to \QZas (G,V)$$ such that for any $q=(q_\lambda)_{\lambda \in \Lambda} \in \bigoplus\limits_{\lambda\in \Lambda }\QZas (H_\lambda, U_\lambda)$  the following hold.
\begin{enumerate}
\item[(a)] For any $\lambda\in \Lambda $ and any $h\in H_{\lambda}\setminus\mathcal F_{\lambda}$, we have  $\iota (q)(h)= q_{\lambda}(h).$ In particular, $\sup\limits_{h\in H_\lambda} \| \iota(q)(h)-q_\lambda (h)\| <\infty$.
\item[(b)] $D(\iota (q))\le  \sum_{\lambda}(54
K_\lambda  + 66 D (q_\lambda)).$
\end{enumerate}
\end{thm}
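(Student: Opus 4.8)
The plan is to extend $q$ by the same ``normal form'' recipe as in Example~\ref{ex2}, the role of the factors $h_i,k_i$ being played by the elements of $H_\lambda$ that geodesics traverse inside the $(1,g)$-separating cosets. Explicitly, for $q=(q_\lambda)_{\lambda\in\Lambda}\in\bigoplus_\lambda\QZas(H_\lambda,U_\lambda)$ I would set
$$
\iota(q)(g)=\sum_{\lambda\in\Lambda}\ \sum_{xH_\lambda\in S_\lambda(1,g)}\ \frac1{|E(1,g;xH_\lambda)|}\sum_{(u,v)\in E(1,g;xH_\lambda)}u\,q_\lambda(u^{-1}v).
$$
This is a finite sum in $V$: each $xH_\lambda\in S_\lambda(1,g)$ is penetrated, through a distinct component, by a fixed geodesic from $1$ to $g$ (Lemma~\ref{sep}(a), the trivial case aside), so $\sum_\lambda|S_\lambda(1,g)|\le\dxh(1,g)$; each $E(1,g;xH_\lambda)$ is finite by Lemma~\ref{Efg}(c); and $q$ has only finitely many nonzero coordinates. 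Linearity of $\iota$ is immediate. For part~(a): if $h\in H_\lambda\setminus\mathcal F_\lambda$ then $h$ is a generator, so $\dxh(1,h)=1$, every geodesic from $1$ to $h$ is a single edge, and the only such edge with label in $\mathcal H$ is the one labelled $h\in H_\lambda\setminus\{1\}$; hence $S_\mu(1,h)=\emptyset$ for $\mu\ne\lambda$ and $S_\lambda(1,h)=\{H_\lambda\}$ with $E(1,h;H_\lambda)=\{(1,h)\}$, so $\iota(q)(h)=q_\lambda(h)$, and the ``in particular'' follows from Lemma~\ref{Ffinite}. Anti-symmetry of $\iota(q)$ is then formal: by Lemma~\ref{Sfg} and Lemma~\ref{Efg}(a),(b) the pairs $(u,v)\in E(1,g;yH_\lambda)$ are in bijection with the pairs $(g^{-1}v,g^{-1}u)\in E(1,g^{-1};g^{-1}yH_\lambda)$, and anti-symmetry of $q_\lambda$ (i.e. $q_\lambda(v^{-1}u)=-v^{-1}u\,q_\lambda(u^{-1}v)$) turns $g^{-1}v\,q_\lambda(v^{-1}u)$ into $-g^{-1}u\,q_\lambda(u^{-1}v)$, giving $\iota(q)(g^{-1})=-g^{-1}\iota(q)(g)$.

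To prove the defect bound~(b) I would first recast the problem. For arbitrary $a,b\in G$ put
$$
\Phi(a,b)=\sum_\lambda\sum_{xH_\lambda\in S_\lambda(a,b)}\Theta_\lambda(a,b;xH_\lambda),\qquad\Theta_\lambda(a,b;xH_\lambda)=\frac1{|E(a,b;xH_\lambda)|}\sum_{(u,v)\in E(a,b;xH_\lambda)}u\,q_\lambda(u^{-1}v),
$$
so that $\iota(q)(g)=\Phi(1,g)$. From Lemma~\ref{Sfg}(b), Lemma~\ref{Efg}(b) and linearity of the $G$-action one reads off $\Phi(ha,hb)=h\Phi(a,b)$, hence $f\iota(q)(g)=\Phi(f,fg)$ and
$$
\iota(q)(fg)-\iota(q)(f)-f\iota(q)(g)=\Phi(a,c)-\Phi(a,b)-\Phi(b,c),\qquad(a,b,c)=(1,f,fg),
$$
so, using equivariance and $G$-invariance of the norm, it is equivalent to bound $\|\Phi(a,c)-\Phi(a,b)-\Phi(b,c)\|$ uniformly in $a,b,c\in G$. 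All the sets $S_\lambda(\,\cdot\,,\cdot\,)$ here being finite, I would expand this difference into a finite sum, over those cosets $xH_\lambda$ separating at least one of the three pairs, of the ``local defects''
$$
\delta_\lambda(xH_\lambda)=\tau_\lambda(a,c;xH_\lambda)-\tau_\lambda(a,b;xH_\lambda)-\tau_\lambda(b,c;xH_\lambda),
$$
where $\tau_\lambda(\,\cdot\,,\cdot\,;xH_\lambda)$ is $\Theta_\lambda(\,\cdot\,,\cdot\,;xH_\lambda)$ when $xH_\lambda$ is separating for the pair in question and $0$ otherwise, and then bound each $\|\delta_\lambda(xH_\lambda)\|$.

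The heart of the argument is the coset-by-coset analysis of $\delta_\lambda(xH_\lambda)$, and here I would apply Lemma~\ref{fgh} three times: to $S_\lambda(a,c)$ split at $b$, to $S_\lambda(a,b)$ split at $c$, and to $S_\lambda(b,c)$ split at $a$. Bookkeeping on which of $S_\lambda(a,c),S_\lambda(a,b),S_\lambda(b,c)$ contains a given coset shows that, apart from the at most $|F|+|\hat F|+|\check F|\le 6$ cosets lying in the exceptional sets produced by Lemma~\ref{fgh}, every coset falls into one of the ``$S'$/$S''$''-parts of one of these three decompositions; on those parts the equalities of entrance--exit sets of Lemma~\ref{fgh}(a),(b), combined with the anti-symmetry identity $v\,q_\lambda(v^{-1}u)=-u\,q_\lambda(u^{-1}v)$, make $\delta_\lambda(xH_\lambda)=0$ \emph{exactly}. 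For each of the at most $6$ exceptional cosets per $\lambda$ I would estimate $\delta_\lambda(xH_\lambda)$ directly: a case analysis on which pairs $xH_\lambda$ separates and on where $xH_\lambda$ lies along a geodesic side of the triangle $a,b,c$ should show that either the vertex $b$ lies ``essentially inside'' $xH_\lambda$, so the three traversals telescope via the quasi-cocycle inequality for $q_\lambda$, or the $H_\lambda$-element actually traversed has $\dl$-length $<15C$; in either case, using Lemma~\ref{sep}(b) and Lemma~\ref{C} to match the entrance and exit points of the relevant geodesics inside $xH_\lambda$ up to $\dl$-distance $3C$, each residual summand is either of the form $q_\lambda(s)$ with $\dl(1,s)<15C$ --- hence of norm $\le K_\lambda$ by the definition~(\ref{Kl}) --- or costs one unit of $D(q_\lambda)$. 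Tracking constants, this should give $\|\delta_\lambda(xH_\lambda)\|\le 9K_\lambda+11D(q_\lambda)$ per exceptional coset; summing over the $\le 6$ of them and over $\lambda$ (only finitely many terms nonzero) yields $D(\iota(q))\le\sum_\lambda(54K_\lambda+66D(q_\lambda))$, and $\iota(q)\in\QZas(G,V)$.

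The hard part is precisely this last step. None of $\Theta_\lambda(a,c;xH_\lambda),\Theta_\lambda(a,b;xH_\lambda),\Theta_\lambda(b,c;xH_\lambda)$ is bounded on its own --- each involves $q_\lambda$ at a possibly $\dl$-long element --- so for the exceptional cosets the estimate can only come from cancellation, and making that cancellation precise requires pinning down exactly where an exceptional coset sits relative to the geodesic triangle $a,b,c$: one must know that it separates the ``right'' pairs, that its entrance and exit points agree with those of a reference geodesic up to $3C$, and that any genuinely long traversal is common to two of the three $\Theta$-terms. This geometric bookkeeping --- carried by the separating-coset technology of Section~\ref{sepco} and, above all, by the triangle decomposition of Lemma~\ref{fgh} --- is the real content; the verification of linearity, anti-symmetry and part~(a) is routine.
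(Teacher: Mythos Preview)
Your proposal is correct and follows essentially the same route as the paper. The paper packages the construction in the language of ``partial bi-combings'' (setting $r_\lambda(f,g)=f\,q_\lambda(f^{-1}g)$, then $\widetilde r_\lambda(f,g)=\sum_{xH_\lambda\in S_\lambda(f,g)}R_{av}(f,g;xH_\lambda)$, and finally $\iota(q)(g)=\sum_\lambda\widetilde r_\lambda(1,g)$), but unwinding those definitions gives exactly your formula for $\iota(q)(g)$; the paper then proves the area bound for $\widetilde r_\lambda$ by precisely the mechanism you describe --- apply Lemma~\ref{fgh} to each of the three sides, observe that $\rho$ (your $\delta_\lambda$) vanishes on the $S'/S''$ parts by the $E$-set equalities and anti-symmetry, and bound $\|\rho\|\le 11D(q_\lambda)+9K_\lambda$ on each of the $\le 6$ exceptional cosets via a case analysis (Cases~1, 2a, 2b, 3a--c in the paper) using Lemma~\ref{C}, Lemma~\ref{sep}(b), and Corollary~\ref{bcpoly} --- yielding the identical constant $6\cdot(11D+9K)=66D+54K$.
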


Notice that the sum in part (b) is finite because $q_\lambda\equiv 0$ for all but finitely many $\lambda$, and thus $K_\lambda=D(q_\lambda)=0$ for all but finitely many $\lambda$. If $G$ contains a singe hyperbolically embedded subgroup, Theorem \ref{real} obviously reduces to Theorem \ref{ind} mentioned in the introduction. Using Lemma \ref{antisym}, one can also obtain a general version of Corollary \ref{cor1}. We leave this to the reader.

Throughout the rest of the section, we use the notation of Theorem \ref{real}. Although our proof can be entirely written in the language of quasi-cocycles, the following concept helps making some arguments more symmetric and easier to comprehend. In the definition below, we write $s(a)=t(a)$ for two partial maps $s,t\colon A\to B$ if the value $s(a)$ is defined if and only if $t(a)$ is, and these values are equal whenever defined.

\begin{defn}\label{bc}
A \emph{partial bi-combing of $G$ with coefficients in $V$} is a partial map  $r\colon G\times G\to V$. We say that
\begin{enumerate}
\item[(a)] $r$ is \emph{$G$-equivariant} if $h r (f,g)=r (hf, hg)$ for any $f,g,h\in G$;
\item[(b)] $r$ is \emph{anti-symmetric} if $r(f,g)=-r(g,f)$ for any $f,g\in G$.
\item[(c)] $r$ has \emph{bounded area} if there exists a constant $A$ such that for any $f,g,h\in G$  for which $r(f,g)$, $r(g,h)$, and $r(h,f)$ are defined, we have
\begin{equation}\label{ba}
\| r(f,g)+r(g,h)+r(h,f)\| \le A.
\end{equation}
The infimum of all $A$ satisfying (\ref{ba}) is called the {\it area} of $r$ and is denoted by $A(r)$.
\end{enumerate}
\end{defn}

Let us fix $\lambda \in \Lambda $. Given $q_\lambda\in \QZas (H_\lambda, U_\lambda)$, we define a partial map $r_\lambda \colon G\times G\to V$ by
$$
r_\lambda (f,g)=fq_\lambda (f^{-1}g).
$$
Thus $r_\lambda (f,g)$ is defined if and only if $f$ and $g$ belong to the same coset $xH_\lambda$.

\begin{lem}\label{r}
The partial map $r_\lambda \colon G\times G \to V$ is an anti-symmetric equivariant partial bi-combing of $G$  of area
\begin{equation}\label{ArDq}
A(r_\lambda)\le  D(q_\lambda).
\end{equation}
\end{lem}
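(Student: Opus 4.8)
The plan is to verify directly the three properties in Definition \ref{bc} from the formula $r_\lambda(f,g)=fq_\lambda(f^{-1}g)$, using only that $q_\lambda$ is anti-symmetric and that it satisfies the defect inequality $\|q_\lambda(ab)-q_\lambda(a)-aq_\lambda(b)\|\le D(q_\lambda)$. First I would record the bookkeeping about the domain: $r_\lambda(f,g)$ is defined precisely when $f^{-1}g\in H_\lambda$ (so that $q_\lambda(f^{-1}g)\in U_\lambda\subseteq V$ makes sense), and this condition is symmetric in $f,g$ and invariant under left translation, since $g^{-1}f=(f^{-1}g)^{-1}$ and $(hf)^{-1}(hg)=f^{-1}g$.

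For $G$-equivariance one simply computes $r_\lambda(hf,hg)=hf\,q_\lambda((hf)^{-1}(hg))=hf\,q_\lambda(f^{-1}g)=h\,r_\lambda(f,g)$. For anti-symmetry, writing $a=f^{-1}g\in H_\lambda$ and invoking $q_\lambda(a^{-1})=-a^{-1}q_\lambda(a)$ gives $r_\lambda(g,f)=g\,q_\lambda(a^{-1})=-ga^{-1}q_\lambda(a)=-fq_\lambda(a)=-r_\lambda(f,g)$.

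The only part with any content is the bounded-area estimate. The three values $r_\lambda(f,g),r_\lambda(g,h),r_\lambda(h,f)$ are simultaneously defined exactly when $f,g,h$ lie in one common coset $xH_\lambda$. Using equivariance I would translate the triple by $f^{-1}$; since $G$ acts by isometries this does not change $\|r_\lambda(f,g)+r_\lambda(g,h)+r_\lambda(h,f)\|$, so we may assume $f=1$ and set $a=f^{-1}g$, $b=f^{-1}h$, both in $H_\lambda$ (and $a^{-1}b\in H_\lambda$ too). The cyclic sum then becomes $q_\lambda(a)+a\,q_\lambda(a^{-1}b)+b\,q_\lambda(b^{-1})$; applying $q_\lambda(b^{-1})=-b^{-1}q_\lambda(b)$ rewrites it as $q_\lambda(a)+a\,q_\lambda(a^{-1}b)-q_\lambda(b)$, which is exactly $-\bigl(q_\lambda(a\cdot a^{-1}b)-q_\lambda(a)-a\,q_\lambda(a^{-1}b)\bigr)$. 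Its norm is therefore at most $D(q_\lambda)$ by definition of the defect, which yields $A(r_\lambda)\le D(q_\lambda)$.

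Every step is a one-line algebraic manipulation, so I do not expect a genuine obstacle; the only points requiring care are the verification that all intermediate values $r_\lambda(1,a)$, $r_\lambda(a,b)$, $r_\lambda(b,1)$ are indeed defined (they are, since $a,b,a^{-1}b\in H_\lambda$) and the remark that isometricity of the $G$-action legitimizes the reduction to $f=1$.
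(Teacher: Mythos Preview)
Your proof is correct and follows essentially the same approach as the paper: equivariance and anti-symmetry are verified directly from the definition, and for the area bound you reduce by equivariance to the triple $(1,a,b)$ and recognize the cyclic sum as (the negative of) a single defect expression for $q_\lambda$. The paper does exactly this, only more tersely and without the explicit domain bookkeeping you supply.
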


\begin{proof}
Equivariance of $r_\lambda$ is obvious and anti-symmetry follows immediately from anti-symmetry of $q_\lambda$. By equivariance it suffices to verify the bounded area condition for the a triple $1, g,h\in G$. We have
$$
\| r_\lambda(1,g)+r_\lambda(g,h)+r_\lambda(h,1)\| = \| q_\lambda (g) +gq_\lambda(g^{-1}h) -q_\lambda(h)\|  \le  D(q_\lambda).
$$
\end{proof}

\begin{cor}\label{bcpoly}
For any $n\in \mathbb N$, any $x\in G$, and any $g_0, \ldots, g_n\in xH_\lambda$, we have
$$
\left\| r_\lambda(g_0, g_n)- \sum\limits_{i=1}^n r_\lambda(g_{i-1}, g_i)\right\| \le (n-1)D(q_\lambda).
$$
\end{cor}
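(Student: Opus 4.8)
The statement is a telescoping estimate, so the plan is a straightforward induction on $n$, with the inductive step powered by the bounded-area property of $r_\lambda$ established in Lemma \ref{r} together with the anti-symmetry of $r_\lambda$. First I would record the ``three-term'' form of the area bound: since $r_\lambda$ is anti-symmetric, for any $f,g,h$ lying in a common coset $xH_\lambda$ we may rewrite the bounded-area inequality $\|r_\lambda(f,g)+r_\lambda(g,h)+r_\lambda(h,f)\|\le A(r_\lambda)$ as
$$
\bigl\| r_\lambda(f,h)-r_\lambda(f,g)-r_\lambda(g,h)\bigr\|\le A(r_\lambda)\le D(q_\lambda),
$$
using $r_\lambda(h,f)=-r_\lambda(f,h)$ and (\ref{ArDq}). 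This is the only structural fact needed.

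Next I would set up the induction. For $n=1$ the left-hand side is $\|r_\lambda(g_0,g_1)-r_\lambda(g_0,g_1)\|=0=(1-1)D(q_\lambda)$, so the base case is trivial. For the inductive step, assume the inequality for $n-1$ and all choices of $n$ points in a coset, and take $g_0,\dots,g_n\in xH_\lambda$. I would split the difference as
$$
r_\lambda(g_0,g_n)-\sum_{i=1}^n r_\lambda(g_{i-1},g_i)
=\Bigl(r_\lambda(g_0,g_n)-r_\lambda(g_0,g_{n-1})-r_\lambda(g_{n-1},g_n)\Bigr)
+\Bigl(r_\lambda(g_0,g_{n-1})-\sum_{i=1}^{n-1} r_\lambda(g_{i-1},g_i)\Bigr),
$$
which is valid since all $g_i$ lie in the single coset $xH_\lambda$ (so every $r_\lambda$ appearing is defined). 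Applying the triangle inequality, the three-term bound above to the first parenthesis, and the inductive hypothesis (with the $n-1$ points $g_0,\dots,g_{n-1}$) to the second, I get the estimate $D(q_\lambda)+(n-2)D(q_\lambda)=(n-1)D(q_\lambda)$, completing the induction.

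There is really no obstacle here; the only point requiring a line of care is checking that the decomposition is well-posed, i.e. that all the partial-map values $r_\lambda(g_{i-1},g_i)$, $r_\lambda(g_0,g_{n-1})$ and $r_\lambda(g_0,g_n)$ are defined — this holds precisely because the hypothesis puts all of $g_0,\dots,g_n$ in the common coset $xH_\lambda$, which is exactly the domain condition for $r_\lambda$. Everything else is the triangle inequality applied to the telescoped sum.
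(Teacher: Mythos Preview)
Your proof is correct and follows essentially the same approach as the paper, which simply notes that the claim follows from anti-symmetry, the definition of area, and (\ref{ArDq}) by induction. You have merely written out the details of that induction explicitly.
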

\begin{proof}
The claim follows from anti-symmetry, the definition of area, and (\ref{ArDq}) by induction.
\end{proof}

Our next goal is to construct a globally defined anti-symmetric  bounded area $G$-equivariant bi-combing  $\widetilde r_{\lambda}\colon G\times G\to V$ that extends $r_\lambda $. To this end, for each  $f,g\in G$ and each coset $xH_\lambda$, we define the average
$$
R_{av}(f,g;xH_{\lambda})= \frac{1}{|E(f,g;xH_{\lambda})|} \sum\limits_{(u, v)\in E(f,g; xH_{\lambda})}r_\lambda (u,v).
$$
If $xH_\lambda \notin S_\lambda (f,g)$, we assume $R_{av}(f,g;xH_{\lambda})= 0$.
Note that $R_{av}(f,g;xH_{\lambda})$ is well-defined since $E(f,g;xH_{\lambda})<\infty $ by part (c) of Lemma \ref{Efg}.

\begin{lem}\label{Eavfg}
For any $f,g,h,x\in G$, the following hold.
\begin{enumerate}
\item[(a)] $R_{av}(f,g;xH_{\lambda})=-R_{av}(g,f;xH_{\lambda})$.
\item[(b)] $R_{av}(hf,hg;hxH_{\lambda})=R_{av}(f,g;xH_{\lambda})$.
\item[(c)] For any $(u,v)\in E(f,g; xH_\lambda)$, we have
\begin{equation}\label{rR}
\|r_\lambda(u,v)-R_{av}(f, g;xH_{\lambda})\|\leq 2D(q_\lambda)+2K_\lambda.
\end{equation}
\end{enumerate}
\end{lem}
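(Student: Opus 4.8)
The plan is to prove the three claims in order, with (c) being the real content and (a), (b) essentially formal.

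\medskip

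\emph{Part (a) (anti-symmetry of the average).} By Lemma~\ref{Efg}(a), the set $E(g,f;xH_\lambda)$ is obtained from $E(f,g;xH_\lambda)$ by swapping each pair $(u,v)\mapsto(v,u)$; in particular the two sets have the same cardinality, so the normalizing factor $1/|E(\cdot)|$ is unchanged. Since $r_\lambda$ is anti-symmetric by Lemma~\ref{r}, we have $r_\lambda(v,u)=-r_\lambda(u,v)$, and summing over the (bijectively matched) pairs gives $R_{av}(g,f;xH_\lambda)=-R_{av}(f,g;xH_\lambda)$. If $xH_\lambda\notin S_\lambda(f,g)=S_\lambda(g,f)$ (using Lemma~\ref{Sfg}(a)), both sides are $0$ by convention.

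\medskip

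\emph{Part (b) ($G$-equivariance of the average).} By Lemma~\ref{Efg}(b), $E(hf,hg;hxH_\lambda)=\{(hu,hv)\mid (u,v)\in E(f,g;xH_\lambda)\}$, again a cardinality-preserving bijection. Since $r_\lambda$ is $G$-equivariant (Lemma~\ref{r}), $r_\lambda(hu,hv)=h\,r_\lambda(u,v)$, and $h$ acts linearly, so it pulls out of the finite sum and the scalar factor: $R_{av}(hf,hg;hxH_\lambda)=h\,R_{av}(f,g;xH_\lambda)$. (Actually the statement as written omits the $h$ on the right; presumably the intended statement has $h\,R_{av}(f,g;xH_\lambda)$ on the right-hand side, or a typo in the coset — either way this is the argument.) The empty case is again handled by the convention together with Lemma~\ref{Sfg}(b).

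\medskip

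\emph{Part (c) (the averaging estimate — main step).} Fix $(u,v)\in E(f,g;xH_\lambda)$; we must bound $\|r_\lambda(u,v)-R_{av}(f,g;xH_\lambda)\|$. Write $E=E(f,g;xH_\lambda)$ and $N=|E|$. Since $R_{av}=\frac1N\sum_{(u',v')\in E}r_\lambda(u',v')$ is an average, we have
\[
r_\lambda(u,v)-R_{av}(f,g;xH_\lambda)=\frac1N\sum_{(u',v')\in E}\bigl(r_\lambda(u,v)-r_\lambda(u',v')\bigr),
\]
so by the triangle inequality it suffices to bound $\|r_\lambda(u,v)-r_\lambda(u',v')\|$ for an arbitrary pair $(u',v')\in E$ by $2D(q_\lambda)+2K_\lambda$. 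The four points $u,v,u',v'$ all lie in $xH_\lambda$, so $r_\lambda(u,u')$, $r_\lambda(v',v)$ are defined; by Corollary~\ref{bcpoly} applied to the chain $u\to u'\to v'\to v$,
\[
\bigl\|r_\lambda(u,v)-r_\lambda(u,u')-r_\lambda(u',v')-r_\lambda(v',v)\bigr\|\le 2D(q_\lambda).
\]
Now I need to control $\|r_\lambda(u,u')\|$ and $\|r_\lambda(v',v)\|$. This is where Lemma~\ref{sep}(b) enters: since $(u,v)$ and $(u',v')$ are entrance/exit pairs of two geodesics in $\mathcal G(f,g)$ through the separating coset $xH_\lambda$, we have $\widehat d_\lambda(u,u')\le 3C$ and $\widehat d_\lambda(v,v')\le 3C$ (the non-trivial case; in the trivial case $E$ is a singleton and there is nothing to prove). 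Then $r_\lambda(u,u')=u\,q_\lambda(u^{-1}u')$ with $\widehat d_\lambda(1,u^{-1}u')=\widehat d_\lambda(u,u')\le 3C<15C$, so $\|r_\lambda(u,u')\|=\|q_\lambda(u^{-1}u')\|\le K_\lambda$ by the definition \eqref{Kl} of $K_\lambda$ (the action is by isometries), and likewise $\|r_\lambda(v',v)\|\le K_\lambda$. Combining, $\|r_\lambda(u,v)-r_\lambda(u',v')\|\le 2D(q_\lambda)+2K_\lambda$, and averaging over $(u',v')\in E$ preserves this bound.

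\medskip

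The only real subtlety is making sure the bound $3C$ (rather than something slightly larger accumulated from the path $e$ of length $\le 1$ used in Lemma~\ref{sep}) is what is actually available, and that $3C<15C$ comfortably absorbs it — which it does, with room to spare, so no sharpness issues arise. The trivial-separating case must be dispatched separately but is immediate since then $E=\{(f,g)\}$ and the left side of \eqref{rR} is $0$.
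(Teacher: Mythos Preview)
Your proof is correct and follows essentially the same route as the paper: parts (a) and (b) are derived from the corresponding parts of Lemma~\ref{Efg} together with anti-symmetry and equivariance of $r_\lambda$, and part (c) is obtained by bounding $\|r_\lambda(u,v)-r_\lambda(u',v')\|$ via Corollary~\ref{bcpoly} applied to the chain $u,u',v',v$ together with the $3C$-bound from Lemma~\ref{sep}(b) and the definition of $K_\lambda$. Your observation about part (b) is also correct: the right-hand side should read $h\,R_{av}(f,g;xH_\lambda)$, as is needed later to show that $\widetilde r_\lambda$ is $G$-equivariant.
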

\begin{proof}
The first claim follows from parts (a) of Lemma \ref{Efg} and anti-symmetry of $r_\lambda $. The second claim follows from parts (b) of Lemma \ref{Efg} and the equivariance  of $r_\lambda$.

To prove (c), note that for any $(u^\prime, v^\prime)\in E(f, g;xH_{\lambda})$, we have $$\max\{\dl(u, u^\prime), \dl(v, v^\prime)\}\leq 3C$$ by Lemma \ref{sep}. Thus, using the triangle inequality and applying Corollary \ref{bcpoly} to elements $u,u^\prime, v^\prime, v\in xH_\lambda $, we obtain
$$\begin{array}{rl}
\|r_{\lambda}(u, v)-r_\lambda( u^\prime,  v^\prime)\| \le &
 \|r_{\lambda}( u, v)-r_{\lambda}( u,  u^\prime)-r_\lambda( u^\prime, v^\prime)-r_{\lambda}( v^\prime, v)\| \\ & \\ +&\|r_{\lambda}( u,  u^\prime)\|+\|r_{\lambda}( v^\prime,  v)\|
\leq 2D(q_\lambda)+2K_\lambda.
\end{array}$$
This obviously implies (\ref{rR}).
\end{proof}

Let
\[
\widetilde r_{\lambda}(f,g)=\sum\limits_{xH_{\lambda}\in S_{\lambda}(f,g)}R_{av}(f,g;xH_{\lambda}).
\]
Note that $\widetilde r_\lambda$ is well-defined as $S_\lambda (f,g)$ is finite for any $f,g\in G$ by Corollary \ref{Sfin}.

\begin{lem}\label{risbc}
The map $\widetilde r_{\lambda}\colon G\times G\to V$ is an anti-symmetric $G$-equivariant bi-combing of area \begin{equation}\label{A-r-tilde}
 A(\widetilde r_\lambda )\le 66D(q_\lambda )+54 K_\lambda .
\end{equation}
\end{lem}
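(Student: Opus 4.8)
The plan is to verify the three properties of Definition \ref{bc} for $\widetilde r_\lambda$ in turn, with the area estimate being the substantial point. Anti-symmetry and $G$-equivariance are immediate from the corresponding properties of $R_{av}(\cdot,\cdot;\cdot)$ established in Lemma \ref{Eavfg}(a),(b), together with Lemma \ref{Sfg}, which says that $S_\lambda(g,f)=S_\lambda(f,g)$ and $S_\lambda(hf,hg)=\{hxH_\lambda\mid xH_\lambda\in S_\lambda(f,g)\}$; summing over the separating cosets then transports these identities to $\widetilde r_\lambda$. So the work is entirely in the bounded-area inequality.

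For the area bound, by $G$-equivariance it suffices to estimate $\|\widetilde r_\lambda(f,g)+\widetilde r_\lambda(g,h)+\widetilde r_\lambda(h,f)\|$ for an arbitrary triple $f,g,h$ (one can even normalize one of them to $1$, though this is not essential). The idea is to organize the sum coset-by-coset. Applying Lemma \ref{fgh} three times — to the triples $(f,g,h)$, $(g,h,f)$, and $(h,f,g)$ — one decomposes each of $S_\lambda(f,g)$, $S_\lambda(g,h)$, $S_\lambda(h,f)$ into a ``shared'' part on which the relevant $E(\cdot,\cdot;xH_\lambda)$ (hence $R_{av}$) agrees with that coming from one of the other two pairs, plus an exceptional set of size at most $2$. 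The key combinatorial claim is that after these decompositions, every coset $xH_\lambda$ lying in the shared parts contributes a term $R_{av}(f,g;xH_\lambda)$ that is exactly cancelled by an equal-and-opposite term from one of the other two pairs (using anti-symmetry $R_{av}(f,g;xH_\lambda)=-R_{av}(g,f;xH_\lambda)$ and the equality of the $E$-sets guaranteed by parts (a),(b) of Lemma \ref{fgh}). One must be slightly careful: a coset could appear in more than one of the sets $S_\lambda(f,g),S_\lambda(g,h),S_\lambda(h,f)$ in various combinations, so I would first argue that any coset appearing in all three, or in the ``wrong'' combination, is forced into one of the size-$\le 2$ exceptional sets $F$. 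After the cancellation, what survives is a sum over a bounded number of cosets — at most $2+2+2=6$ exceptional cosets from the three applications of Lemma \ref{fgh}, and each such coset may additionally have its $R_{av}$-value replaced (when passing between pairs) at a cost controlled by Lemma \ref{Eavfg}(c).

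Assembling the constant: each surviving coset contributes a single $R_{av}$ term, and whenever we compare $R_{av}(f,g;xH_\lambda)$ to $r_\lambda(u,v)$ for a representative entrance-exit pair we pay $2D(q_\lambda)+2K_\lambda$ by (\ref{rR}); finally the leftover $r_\lambda$-terms themselves close up into triangles of bounded area, each costing $\le D(q_\lambda)$ by Lemma \ref{r}. Tracking the multiplicities — six exceptional cosets, each possibly re-expressed twice, plus the triangle-closing terms — yields a bound of the shape $66 D(q_\lambda)+54K_\lambda$; I would simply bookkeep the worst-case count of applications of (\ref{rR}) and (\ref{ArDq}) to land on exactly these coefficients, not worrying about optimality. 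The main obstacle is the careful case analysis in the previous paragraph: making precise which cosets can escape the cancellation and showing their number is bounded by $6$ independent of $f,g,h$ — this is where Lemma \ref{fgh}, and in particular the fact that on $S'$ and $S''$ the entrance-exit sets are literally unchanged, does the heavy lifting.
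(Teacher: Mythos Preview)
Your overall architecture matches the paper's: reduce anti-symmetry and equivariance to Lemma~\ref{Sfg} and Lemma~\ref{Eavfg}, then apply Lemma~\ref{fgh} three times (once for each cyclic shift of $(f,g,h)$) to see that $\rho(f,g,h;xH_\lambda):=R_{av}(f,g;xH_\lambda)+R_{av}(g,h;xH_\lambda)+R_{av}(h,f;xH_\lambda)$ vanishes except on at most six exceptional cosets. That part is fine.

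The gap is in your final step, bounding $\|\rho\|$ at an exceptional coset. Your plan is to pass from each nonzero $R_{av}$ to some $r_\lambda(u,v)$ at cost $2D(q_\lambda)+2K_\lambda$ via (\ref{rR}), and then let the resulting $r_\lambda$-terms ``close up into triangles'' at cost $D(q_\lambda)$. But these terms are $r_\lambda(a_-,a_+)$, $r_\lambda(c_+,c_-)$, $r_\lambda(b_+,b_-)$ for entrance/exit points of three \emph{different} geodesics in $xH_\lambda$; the six endpoints need not coincide, so there is no triangle to close with Lemma~\ref{r}. What the paper actually does here is a geometric case analysis: using Lemma~\ref{C} on the sub-triangles and sub-quadrilaterals cut out by $xH_\lambda$, one shows that the ``gap'' segments between consecutive endpoints have $\dl$-length at most $3C$ or $4C$, so their $r_\lambda$-values are bounded by $K_\lambda$, and then Corollary~\ref{bcpoly} closes a hexagon (or quadrilateral) rather than a triangle. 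This is precisely where the constant $K_\lambda$ (and the $15C$ in its definition) enters.

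The issue is sharpest in the case you do not address at all: $xH_\lambda\in F$ for the pair $(f,g)$ but $xH_\lambda\notin S_\lambda(g,h)\cup S_\lambda(h,f)$. Then $\rho=R_{av}(f,g;xH_\lambda)$ is a \emph{single} term with nothing to cancel against, and there is certainly some geodesic essentially penetrating $xH_\lambda$, so a priori $\|r_\lambda(u,v)\|$ is unbounded over $(u,v)\in E(f,g;xH_\lambda)$. The paper (its Case~3) fixes one geodesic $p\in\mathcal G(f,g)$ and argues, again via Lemma~\ref{C} and the non-essential-penetration of $q$ and $r$, that the component $a$ of $p$ in $xH_\lambda$ satisfies $\dl(a_-,a_+)\le 15C$; only then does (\ref{rR}) give a bound on $R_{av}$. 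Without this geometric step your argument does not produce any finite bound at such cosets, let alone the stated constants.
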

\begin{proof}
Equivariance and anti-symmetry of $\widetilde r_\lambda$ follow immediately from Lemma \ref{Sfg} and Lemma \ref{Eavfg}. In order to show that $\widetilde{r}_{\lambda}$ satisfies the bounded area condition, we need to estimate the norm of $\widetilde r_\lambda(f,g)+\widetilde r_\lambda (g,h)+ \widetilde r_\lambda (h,f)$
uniformly on $f,g,h\in G$. Since $R_{av}(f,g; xH_\lambda)=0$ if $xH_\lambda \notin S_\lambda(f,g)$, we have
$$
\widetilde r_\lambda(f,g)+\widetilde r_\lambda (g,h)+ \widetilde r_\lambda (h,f)  =\sum_{xH_\lambda \in G/H_\lambda} \rho (f,g,h; xH_\lambda),
$$
where
$$
\rho (f,g,h; xH_\lambda)\colon  = R_{av}(f,g; xH_\lambda) +R_{av}(g,h; xH_\lambda)+R_{av}(h,f; xH_\lambda).
$$
Of course, $\rho (f,g,h; xH_\lambda)$ is nontrivial only if $xH_\lambda \in S_\lambda (f,g)\cup S_\lambda(g,h)\cup S_\lambda(h,f)$.

Fix $f,g,h\in G$. We start by estimating $\rho (f,g,h; xH_\lambda)$ for cosets from $S_\lambda(f,g)$.
Let $S_{\lambda}(f,g)=S^{\prime}\sqcup S^{\prime\prime}\sqcup F$ be the decomposition provided by Lemma \ref{fgh}. Suppose first that $xH_{\lambda}\in S^{\prime}$. Then $xH_{\lambda}\in S_{\lambda}(f,h)=S_{\lambda}(h,f)$ and $E(f,g;xH_{\lambda})=E(f,h;xH_{\lambda})$ by Lemma \ref{fgh}. Hence
\begin{equation}\label{contr1}
R_{av}(f,g; xH_\lambda)= R_{av}(f,h; xH_\lambda)=-R_{av}(h,f; xH_\lambda).
\end{equation}
by Lemma \ref{Eavfg} (a). On the other hand, Lemma \ref{fgh} also states that $xH_\lambda \notin S_\lambda (h,g)=S_{\lambda}(g,h)$. Hence
\begin{equation}\label{contr2}
R_{av}(g,h; xH_\lambda)= 0.
\end{equation}
Summing up (\ref{contr1}) and (\ref{contr2}), we obtain $\rho (f,g,h; xH_\lambda)=0$. Similarly, $\rho (f,g,h; xH_\lambda)=0$ for any $xH_\lambda \in S^{\prime\prime}$. Thus
\begin{equation}\label{redtoF}
\sum\limits_{xH_\lambda \in S_\lambda (f,g)} \rho (f,g,h; xH_\lambda ) =\sum\limits_{xH_\lambda \in F} \rho (f,g,h; xH_\lambda ).
\end{equation}

Fix a coset $xH_{\lambda}\in F$ and any $p\in \mathcal G(f,g)$, $q\in \mathcal G(h,g)$, $r\in \mathcal G(f,h)$. There are three cases to consider.

\begin{figure}
 \centering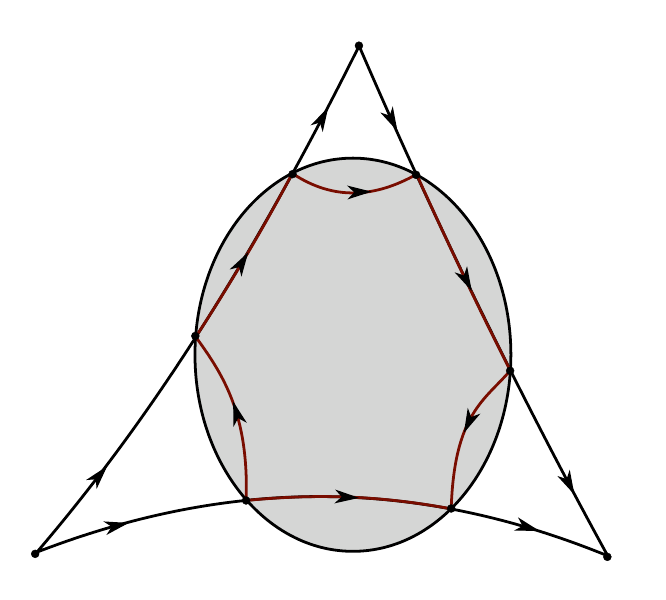\\
  \caption{}\label{fig4}
\end{figure}

\paragraph{\bf Case 1:} $xH_{\lambda}\in S_{\lambda}(g, h)\cap S_{\lambda}(h, f)$.
In this case we have $p=p_1ap_2$, $q=q_1cq_2$, $r=r_1br_2$, where $a$, $c$, and $b$ are $H_\lambda$-components of $p$, $q$, and $r$, respectively, corresponding to the coset $xH_\lambda$ (i.e., $a_{\pm}, b_{\pm}, c_{\pm}\in xH_\lambda$). Let $e_1$, $e_2$, $e_3$ be paths of lengths at most $1$ labeled by elements of $H_\lambda$ and connecting $a_-$ to $b_-$, $b_+$ to $c_-$, and $c_+$ to $a_+$ (see Fig. \ref{fig4}).

Since a geodesic in $\G$ can penetrate a coset of $H_\lambda $ at most once, $e_1$ is either trivial or is an isolated component of a geodesic triangle (namely $p_1e_1r_1^{-1}$). The same holds true for $e_1$ and $e_2$. Hence by Lemma \ref{C}, we obtain
\begin{equation}\label{ei}
\dl ((e_i)_-, (e_i)_+)\le 3C,\;\;\; i=1,2,3.
\end{equation}
In particular,
\begin{equation}\label{rei}
\| r_\lambda ((e_i)_-, (e_i)_+)\| \le K_\lambda ,\;\;\; i=1,2,3.
\end{equation}
by the definition of $K_\lambda$ (see (\ref{Kl})).
Using the triangle inequality, applying Lemma \ref{bcpoly} to the vertices of the hexagon $e_1be_2ce_3a^{-1}$, and using (\ref{rei}), we obtain
\begin{multline*}
\| r_\lambda (a_-, a_+) +r_\lambda (b_+,b_-) + r_\lambda (c_+,c_-)\| \\ \\\le \left\| r_\lambda (a_-, a_+) -r_\lambda (b_-,b_+) - r_\lambda (c_-,c_+)- \sum\limits_{i=1}^3 r_\lambda ((e_i)_-, (e_i)_+) \right\| +\left\| \sum\limits_{i=1}^3 r_\lambda ((e_i)_-, (e_i)_+)\right\| \\ \\\le 5D(q_\lambda) + 3K_\lambda.
\end{multline*}
Now Lemma \ref{Eavfg} (c) implies
\begin{equation}\label{r-c1}
\begin{array}{rl}
\|\rho (f,g,h; xH_\lambda)\|&= \|R_{av}(f,g; xH_\lambda) +R_{av}(g,h; xH_\lambda)+R_{av}(h,f; xH_\lambda) \|\\&\\ &\leq  \| r_\lambda (a_-, a_+) + r_\lambda (c_+,c_-) +r_\lambda (b_+,b_-)\| + 6(D(q_\lambda)+K_\lambda)\\ & \\
& \leq  11D(q_\lambda)+9K_\lambda.
\end{array}
\end{equation}

\paragraph{\bf Case 2:} $xH_{\lambda}\in S_{\lambda}(h, f)\setminus S_{\lambda}(g, h)$ or $xH_{\lambda}\in S_{\lambda}(g, h)\setminus S_{\lambda}(h, f)$. Since the proof in these cases is the same, we will only consider the case $xH_{\lambda}\in S_{\lambda}(h, f)\setminus S_{\lambda}(g, h)$.  Let $p=p_1ap_2$, $r=r_1br_2$, and $e_1$ be as in Case 1 and let $e$ be the path of length at most $1$ in $\G$ connecting
$b_+$ to $a_+$ and labeled by an element of $H_\lambda$. There are two possibilities to consider.

{\bf 2a)} First assume that $e$ is isolated in the quadrilateral $ep_2q^{-1}r_2^{-1}$ (see Fig. \ref{fig5}). Then we have $\dl (e_-,e_+)\le 4C$ by Lemma \ref{C} and hence
$$
\|r_\lambda(e_-,e_+)\| \le K_\lambda.
$$
Note that (\ref{rei}) remains valid for $i=1$. Applying Corollary \ref{bcpoly} to the vertices of the quadrilateral $e_1bea^{-1}$ as in Case 1 we obtain
$$
\begin{array}{rl}
\| r_\lambda (a_-, a_+) +r_\lambda (b_+,b_-) \|  \le &\| r_\lambda (a_-, a_+) -r_\lambda ((e_1)_-,(e_1)_+) - r_\lambda (b_-,b_+)-  r_\lambda (e_-, e_+) \| \\ &\\&  +\| r_\lambda ((e_1)_-, (e_1)_+)\| + \| r_\lambda (e_-,e_+)\| \le 3D(q_\lambda) + 2K_\lambda.
\end{array}
$$
Since $xH_\lambda \notin S_\lambda (g,h)$, we have $R_{av}(g,h;xH_\lambda )=0$. Finally Lemma \ref{Eavfg} (c) implies
\begin{equation}\label{r-c2a}
\begin{array}{rl}
\|\rho (f,g,h; xH_\lambda)\|& =\| R_{av}(f,g; xH_\lambda) +R_{av}(h,f; xH_\lambda)\| \\ &\\
&\leq  \| r_\lambda (a_-, a_+) + r_\lambda (b_+,b_-)\| + 4(D(q_\lambda)+K_\lambda )\\&\\&
\leq  7D(q_\lambda)+6K_\lambda.
\end{array}
\end{equation}

{\bf 2b)} Suppose now that $e$ is not isolated in the quadrilateral $ep_2q^{-1}r_2^{-1}$. Then $e$ is connected to a component $c$ of $q$. Let $q=q_1cq_2$ and let $e_1$ and $e_2$ be as in Case 1 (see Fig. \ref{fig4}). Then (\ref{rei}) remains valid. In addition, we have $\dl (c_-,c_+) \le 3C$ as $xH_\lambda \notin S_\lambda(g,h)$ and hence $q$ can not essentially penetrate $xH_\lambda$. Hence $\| r_\lambda (c_-,c_+)\| \le K_\lambda $.
The reader can easily verify that arguing as in the Case 1 and then as in (\ref{r-c2a}), we can obtain
$$
\| r_\lambda (a_-, a_+) + r_\lambda (b_+,b_-)\| \le 5A_\lambda +4 K_\lambda
$$
and consequently
\begin{equation}\label{r-c2b}
\|\rho (f,g,h; xH_\lambda)\| \le 9D(q_\lambda) + 8K_\lambda .
\end{equation}

\begin{figure}
 \centering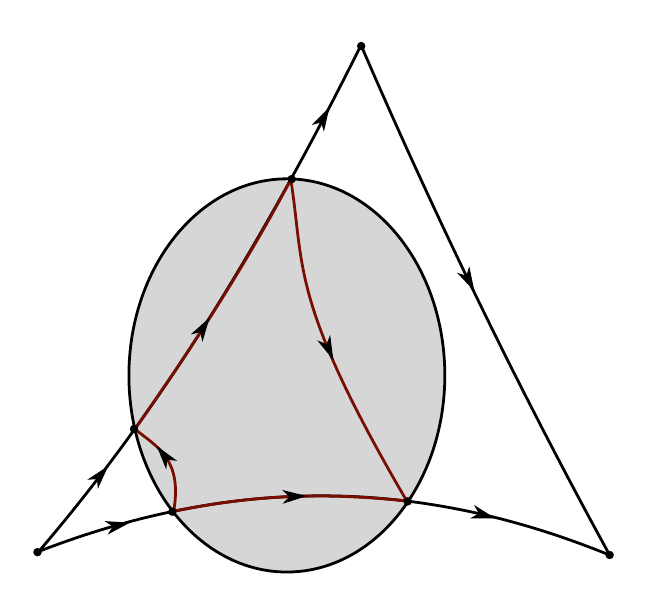\\
  \caption{}\label{fig5}
\end{figure}

\paragraph {\bf Case 3:} $xH_{\lambda}\notin S_{\lambda}(h, f)\cup S_{\lambda}(g, h)$. Let $p=p_1ap_2$ be as in Cases 1 and 2. There are three possibilities to consider.

{\bf 3a)} $a$ is an isolated component of $pq^{-1}r^{-1}$. In this case $\dl (a_-,a_+)\le 3C$.

{\bf 3b)} $a$ is connected to a component of exactly one of $q$, $r$. For definiteness, assume that $a$ is connected to a component $b$ of $r$. Then, in the notation of Case 2 (see Fig. \ref{fig5}), $e$ is isolated in $ep_2q^{-1}r_2^{-1}$ and we have $\dl (e_-,e_+)\le 4C$ by Lemma \ref{C}. As in Case 1, we have  (\ref{ei}) for $i=1$. Since $xH_{\lambda}\notin S_{\lambda}(h, f)$, $r $ can not essentially penetrate $xH_\lambda$. Thus $\dl (b_-,b_+)\le 3C$. Applying the triangle inequality to the quadrilateral $e_1bea^{-1}$, we obtain
$$
\dl (a_-,a_+)\le 10C.
$$

{\bf 3c)} $a$ is connected to a component $b$ of $r$ and a component $c$ of $q$. Then in the notation of Case 1 and Fig. \ref{fig4}, inequalities (\ref{ei}) remain valid and we also have $\dl (b_-, b_+)\le 3C$ and $\dl (c_-,c_+)\le 3C$ as in Case 3b). Applying the triangle inequality to the hexagon $e_1be_2ce_3a^{-1}$, we obtain
$$
\dl (a_-,a_+)\le 15C.
$$

Thus, in all cases 3a) - 3c) we have $\| r_\lambda (a_-,a_+)\| \le K_\lambda $. Since $R_{av} (g,h;xH_\lambda)=R_{av} (h,f;xH_\lambda)=0$ in this case, using Lemma \ref{Eavfg} (c) we obtain
\begin{equation}\label{r-c3}
\| \rho (f,g,h; xH_\lambda)\| = \| R_{av} (f,g;xH_\lambda)\| \le 2A(r_\lambda ) + 3K_\lambda.
\end{equation}
in Case 3.

Summarizing (\ref{redtoF}), (\ref{r-c1}), (\ref{r-c2a}), (\ref{r-c2b}), (\ref{r-c3}), and taking into account that $|F|\le 2$, we obtain
$$
\left\| \sum\limits_{xH_\lambda \in S_\lambda (f,g)} \rho (f,g,h; xH_\lambda )\right\| = \left\| \sum\limits_{xH_\lambda \in F} \rho (f,g,h; xH_\lambda )\right\| \le 22 D(q_\lambda) +18K_\lambda.
$$

Repeating the same arguments for $S_{\lambda}(h,f)$ and $S_{\lambda}(g, f)$ and summing up, we obtain
$$
\left\| \widetilde r_\lambda(f,g)+\widetilde r_\lambda (g,h)+ \widetilde r_\lambda (h,f)\right\| \leq 66D(q_\lambda)+54K_\lambda.
$$
\end{proof}

We are now ready to prove the main extension theorem.

\begin{proof}[Proof of Theorem \ref{real}]
Let $q=(q_\lambda)_{\lambda \in \Lambda} \in \bigoplus\limits_{\lambda\in \Lambda }\QZas(H_\lambda, U_\lambda)$. For each $\lambda\in \Lambda$, let $\widetilde{r}_\lambda$ be the bi-combing constructed above and let $\widetilde{q}_\lambda(g)=\widetilde r_\lambda(1,g)$. Then $\widetilde q_\lambda \in \QZas(G,V)$. Indeed we have
\begin{equation}\label{qc}
\begin{array}{rl}
\|\widetilde{q}_\lambda(fg)-\widetilde{q}_\lambda(f)-f\widetilde{q}_\lambda(g)\|  & = \| \widetilde r_\lambda(1,fg)-\widetilde r_\lambda(1,f)-f\widetilde r_\lambda(1,g)\| \\&\\
& =\| \widetilde r_\lambda(1,fg) + \widetilde r_\lambda(f,1) + \widetilde r_\lambda(fg, f)\| \\&\\ &\le A(\widetilde r_\lambda).
\end{array}
\end{equation}
Anti-symmetry of $\widetilde q_\lambda$ follows from that of $\widetilde r_\lambda$.

Further we define
\[
\iota(q)=\sum_{\lambda\in \Lambda}\widetilde{q}_{\lambda}.
\]
Since $q$ is supported on only finitely many $\lambda$, $\iota(q)$ is equal to a finite linear combination of quasi-cocycles, so $\iota(q)\in \QZas(G, V)$. It is easy to see that the maps $\QZas(H_\lambda, U_\lambda)\to \QZas(G,V)$ defined by $q_\lambda\mapsto \widetilde{q}_\lambda$ are linear. Hence so is $\iota$.

If $h\in H_{\lambda}\setminus\mathcal F_{\lambda}$, then $S_\lambda (1,h)=\{ H_{\lambda}\}$ and $S_\mu (1,h)=\emptyset $ for any $\mu\ne \lambda $.  Obviously $E(1,h; H_\lambda)=\{ (1,h)\}$. Thus $\widetilde{r}_\lambda(1,h)=r_\lambda (1,h)=q_\lambda(h)$ and $\widetilde{r}_\mu (1,h)=0$ whenever $\mu\ne \lambda $. Thus
$$
\iota(q)(h)=\sum_{\mu\in \Lambda} \widetilde{q}_{\mu}(h)=\sum_{\mu\in \Lambda}\widetilde{r}_\mu(1, h)=q_{\lambda}(h).
$$
This finishes the proof of (a). Part (b) follows from (\ref{qc}) and (\ref{A-r-tilde}).
\end{proof}

\begin{rem}\label{as-nec}
Our proof essentially uses the fact that the quasi-cocycles $q_\lambda$ are anti-symmetric. In fact, our approach provably fails for non-anti-symmetric ones. This can be illustrated in the case when $G=F(x,y)$, the free group of rank $2$, and $H=\langle x\rangle$. Indeed take $q\in \QZ (H, \mathbb R)$ defined by
$$
q(x^n)=\left\{\begin{array}{ll}
1, & {\rm \; if \;} n\ge 0,\\
0, & {\rm \; if \;} n<0.
\end{array}\right.
$$
Let $\widetilde q$ be the extension obtained as above using the subset $X=\{ x,y\}$ of $G$. Take any $n\in \mathbb N$ such that $\widehat d (1,x^n)> 3C$ (in fact, $C=0$ in this case, but we will not use this). Then it is straightforward to verify that $\widetilde q((yx^n)^k) =k$ while $\widetilde q((yx^n)^{-k})=\widetilde q((x^{-n}y^{-1})^k)=0$ for every $k\in \mathbb N$. This contradicts the quasi-cocycle identity as $k\to \infty $. A similar argument shows that the anti-symmetry condition can not be dropped in Example \ref{ex2}.
\end{rem}

\section{Applications}\label{app}

\paragraph{Bounded cohomology.} Our goal here is to prove Corollary \ref{cor1}. We begin with an auxiliary result.

\begin{prop}\label{dim}
Let $G$ be a group, $H$ a \he subgroup of $G$, $V$ a $G$-module, and $U$ an $H$-submodule of $V$. Suppose that there exists a continuous projection $\pi \colon V\to U$. Then there is a linear map $\phi\colon \QZ(H,U)\to EH_b^2(G,V)$ such that $\Ker \phi \subseteq \ell^{\infty}(H,U)+ Z^1(H,U)$. In particular, $$\dim H^2_b(G,V)\ge \dim EH_b^2(G,V) \ge \dim EH_b^2(H,U).$$
\end{prop}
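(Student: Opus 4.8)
The plan is to define $\phi$ as the composition
$$
\QZ(H,U)\xrightarrow{\ \kappa\ }\QZ(G,V)\xrightarrow{\ \delta\ }H_b^2(G,V),
$$
where $\kappa$ is the extension map supplied by Corollary \ref{ind1} and $\delta$ is the connecting map of Lemma \ref{seq}. Both are linear, hence so is $\phi$, and by exactness of the sequence in Lemma \ref{seq} at $H_b^2(G,V)$ we have $\im\delta=\Ker c=EH_b^2(G,V)$, so $\phi$ lands in $EH_b^2(G,V)$ with no extra work. The continuous projection $\pi$ is needed only to analyse $\Ker\phi$: it is the device that converts $V$-valued data back into $U$-valued data, and since we will want the restriction to $H$ of a $1$-cocycle to remain a cocycle after projecting, we use that $\pi$ is $H$-equivariant — which is automatic when $U$ is a complemented $H$-submodule of $V$, in particular in the $\ell^p$ applications, $\pi$ then being restriction of functions.

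Next I would compute the kernel. Let $q\in\QZ(H,U)$ with $\phi(q)=0$. Exactness of the sequence of Lemma \ref{seq} at $\QZ(G,V)$ gives $\Ker\delta=\ell^{\infty}(G,V)+Z^1(G,V)$, so $\kappa(q)=b+z$ with $b\colon G\to V$ uniformly bounded and $z\in Z^1(G,V)$. Restricting this identity to $H$ and applying $\pi$ yields
$$
\pi\bigl(\kappa(q)|_H\bigr)=\pi(b|_H)+\pi(z|_H),
$$
where $\pi(b|_H)\in\ell^{\infty}(H,U)$ since $\pi$ is bounded, and $\pi(z|_H)\in Z^1(H,U)$ since the restriction of a cocycle is a cocycle and $\pi$ is $H$-equivariant. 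On the other hand, Corollary \ref{ind1} gives $\sup_{h\in H}\|\kappa(q)(h)-q(h)\|<\infty$; as $q$ takes values in $U$ and $\pi|_U=\mathrm{id}_U$, the map $h\mapsto\pi(\kappa(q)(h))-q(h)=\pi\bigl(\kappa(q)(h)-q(h)\bigr)$ is again uniformly bounded. Hence
$$
q=\pi\bigl(\kappa(q)|_H\bigr)-\Bigl(\pi\bigl(\kappa(q)|_H\bigr)-q\Bigr)\in\ell^{\infty}(H,U)+Z^1(H,U),
$$
which is the desired inclusion $\Ker\phi\subseteq\ell^{\infty}(H,U)+Z^1(H,U)$.

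Finally, the dimension estimates follow formally. The map $\phi$ factors through an isomorphism $\QZ(H,U)/\Ker\phi\cong\im\phi\subseteq EH_b^2(G,V)$, and since $\Ker\phi\subseteq\ell^{\infty}(H,U)+Z^1(H,U)$ there is a surjection $\QZ(H,U)/\Ker\phi\twoheadrightarrow\QZ(H,U)/\bigl(\ell^{\infty}(H,U)+Z^1(H,U)\bigr)$; applying Lemma \ref{seq} to the pair $(H,U)$ identifies the latter quotient with $\im(\delta_H)=EH_b^2(H,U)$. Therefore $\dim EH_b^2(G,V)\ge\dim\im\phi=\dim\QZ(H,U)/\Ker\phi\ge\dim EH_b^2(H,U)$, while $\dim H_b^2(G,V)\ge\dim EH_b^2(G,V)$ is immediate from $EH_b^2(G,V)=\Ker c\subseteq H_b^2(G,V)$. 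I expect the only genuinely delicate point to be the bookkeeping flagged above: keeping the $H$-module $U$ and the $G$-module $V$ apart and, in particular, ensuring that projecting the restricted cocycle $z|_H$ into $U$ still produces an honest $U$-valued $1$-cocycle on $H$, which is precisely where $H$-equivariance of $\pi$ is used.
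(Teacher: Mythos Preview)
Your proposal is correct and follows essentially the same route as the paper: define $\phi=\delta\circ\kappa$, decompose $\kappa(q)=b+z$ via exactness, then project and restrict to $H$. You are in fact slightly more careful than the paper in two places: you flag that $H$-equivariance of $\pi$ is what makes $\pi\circ z|_H$ a genuine $U$-valued cocycle (the paper records this step as ``obviously''), and you spell out the dimension count at the end, which the paper leaves implicit.
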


\begin{proof}
We define $\phi$ to be the composition $\delta\circ\kappa$, where $\kappa$ is given by Corollary \ref{ind1} and $\delta $ is the natural map $\QZ(G,V)\to EH_b^2(G,V)$ (see Lemma \ref{seq}). Note that if $\phi (q)=0$ for some $q\in \QZ(H,U)$, then
\begin{equation}\label{b+h}
\kappa (q)=h+b,
\end{equation}
where  $b\in \ell^{\infty}(G,V)$ and $h\in Z^1(G,V)$. Since $\kappa(q)(x)\in U$ for all $x\in H$, composing both sides of this equality with $\pi$ and restricting to $H$ we obtain  $$\kappa (q)|_H=\pi\circ h\vert_H +\pi\circ b\vert_H.$$ Obviously $\pi\circ h\vert_H\in Z^1(H,U)$ and $\pi\circ b\vert_H\in \ell^\infty (H,U)$ since $\pi $ is continuous. By Corollary \ref{ind1}, $(q-\kappa (q)|_H)\in \ell^{\infty}(H,U)$, thus $q\in \ell^{\infty}(H,U)+ Z^1(H,U)$.
\end{proof}

The next lemma is a simplification of \cite[Theorem 2.23]{DGO}.

\begin{lem}\label{vf}
Let $G\in \mathcal X$. Then for every $n\in \mathbb N$, there exists a subgroup $H_n\h G$ such that $H_n\cong F_n\times K$, where $F_n$ is the free group of rank $n$ and $K$ is finite.
\end{lem}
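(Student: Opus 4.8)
The plan is to reduce the statement to \cite[Theorem 2.23]{DGO}, which (roughly speaking) asserts that if $G$ has a non-degenerate hyperbolically embedded subgroup, then for any $n$ one can find a hyperbolically embedded \emph{free} subgroup of rank $n$, possibly after allowing a finite normal subgroup to appear. Since $G \in \mathcal X$ means precisely that $G$ has a non-degenerate hyperbolically embedded subgroup $\Hl$, the hypothesis of that theorem is satisfied. So the first step is simply to invoke \cite[Theorem 2.23]{DGO} to obtain, for the given $n$, a subgroup $H_n \h G$ together with a finite normal subgroup $K \trianglelefteq H_n$ such that $H_n/K \cong F_n$.

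The second step is to upgrade the short exact sequence $1 \to K \to H_n \to F_n \to 1$ to an actual direct product decomposition $H_n \cong F_n \times K$. Here I would use the fact that $F_n$ is free: the quotient map $H_n \twoheadrightarrow F_n$ splits, since free groups are projective in the category of groups, so there is a section $s \colon F_n \to H_n$ with image a subgroup $F \le H_n$ mapping isomorphically onto $F_n$. It remains to see that $F$ and $K$ commute and that $H_n = FK$; the latter is immediate from the splitting, and for the former one notes that $K$ is normal, so conjugation gives an action of $F \cong F_n$ on the finite group $K$. To kill this action — which is what forces the product to be \emph{direct} rather than merely semidirect — one replaces $F$ by the intersection of the (finitely many, since $K$ is finite) preimages in $F$ of the centralizers of elements of $K$; this is a finite-index subgroup of $F$ acting trivially on $K$, still free of some rank $\ge n$ after possibly re-choosing things, and still hyperbolically embedded because finite-index overgroups of the ambient-relevant kind behave well. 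In fact it is cleaner to arrange this on the $F_n$ side before taking the section: pass to a finite-index free subgroup $F_m \le F_n$ ($m \ge n$) whose preimage in $H_n$ splits as a direct product with $K$, then restrict further to rank exactly $n$.

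The third, more delicate, step is to check that the subgroup produced is still hyperbolically embedded in $G$. Here one uses the stability of hyperbolic embeddedness under passing to suitable subgroups and under the relevant product operations, as recorded in \cite{DGO}: a finite-index subgroup of a hyperbolically embedded subgroup, and a direct factor appearing together with a finite complementary factor, remains hyperbolically embedded (this is part of what \cite[Theorem 2.23]{DGO} and its surrounding lemmas are set up to provide). Since the construction in \cite{DGO} is precisely designed to output such a subgroup, the honest thing to do is to observe that \cite[Theorem 2.23]{DGO} already delivers $H_n \h G$ with $H_n/K \cong F_n$ and $K$ finite normal, and that the only additional work is the purely group-theoretic splitting argument of the previous paragraph, which does not disturb hyperbolic embeddedness because it only replaces $H_n$ by a finite-index subgroup containing $K$.

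The main obstacle I anticipate is the bookkeeping in the second step: arranging \emph{simultaneously} that the free complement has rank exactly $n$, that it centralizes $K$ so the product is direct, and that the resulting subgroup is still hyperbolically embedded. The clean way around this is to start with a slightly larger target rank in the appeal to \cite[Theorem 2.23]{DGO} — one asks for a hyperbolically embedded $H \h G$ with $H/K \cong F_N$ for $N$ large enough that, after intersecting with the centralizer of $K$ and trimming down to rank $n$, one still has what is needed — so that all three requirements can be met at once. Everything else is a routine application of the projectivity of free groups and the permanence properties of hyperbolically embedded subgroups established in \cite{DGO}.
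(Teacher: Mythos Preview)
The paper gives no argument here at all: it simply records that the lemma is a simplification of \cite[Theorem 2.23]{DGO}, meaning that theorem already produces, for each $n$, a hyperbolically embedded subgroup isomorphic to $F_n\times K$ with $K$ finite. Your first step --- invoke \cite[Theorem 2.23]{DGO} --- is therefore the entire proof as the paper intends it.

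Your steps 2 and 3, however, are not just superfluous; they contain a genuine gap. You assume the cited theorem yields only an extension $1\to K\to H\to F_N\to 1$ and then try to manufacture the direct product by splitting, passing to the centralizer of $K$ in the free complement, and finally ``trimming down to rank exactly $n$.'' The splitting and the finite-index passage are fine, and it is true that a finite-index subgroup of a hyperbolically embedded subgroup remains hyperbolically embedded. But the trimming step is not: a rank-$n$ free factor of $F_m$ (with $m>n$) has \emph{infinite} index in $F_m$, so $K\times F_n$ has infinite index in the hyperbolically embedded $K\times F_m$, and there is no permanence principle in \cite{DGO} that says an infinite-index subgroup (or a free factor) of a hyperbolically embedded subgroup is again hyperbolically embedded. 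Your proposed fix of starting with a larger $N$ does not help, since the centralizing step only increases the rank and you are still forced to cut it back down by an infinite-index passage.

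The way out is simply to read \cite[Theorem 2.23]{DGO} more carefully: its construction already controls the rank and produces the direct product form, so no post-processing is needed.
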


We are now ready to prove Corollary \ref{cor1}.

\begin{proof}[Proof of Corollary \ref{cor1}]
It is easy to see that the assumptions of Lemma \ref{dim} hold in the case $V=\ell^p (G)$ and $U=\ell^p(H)$. It is well known that $\dim EH_b^2(H)=\infty $ for every virtually free group which is not virtually cyclic (see. e.g., \cite{Ham1}). To complete the proof it remains to note that every group $G\in \mathcal X$ contains a virtually free but not virtually cyclic \he subgroup by Lemma \ref{vf}.
\end{proof}

\paragraph{Stable commutator length.}
Let $G$ be a group, and let $g\in [G,G]$. The the \emph{commutator length} of $g$, denoted $cl_G(g)$, is defined as the minimal number of commutators whose product is equal to $g$ in $G$. The \emph{stable commutator length} is defined by
\[
scl_G(g)=\lim_{n\rightarrow \infty}\frac{cl_G(g^n)}{n}.
\]
It is customary to extend $scl_G$ to all elements $g$ for which have some positive power $g^n\in[G,G]$ by letting $scl_G(g)=\frac{scl(g^n)}{n}$. Basic facts and theorems about stable commutator length can be found in \cite{DC}.

Following \cite{DC}, we will denote space of \emph{quasimorphisms} on $G$ by $\widehat Q(G)$. Recall that this is the same as  $\QZ(G, \mathbb R)$ where $\mathbb R$ is considered as a $G$-module with the trivial action. Note that  in this setting Theorem \ref{ind} says that any anti-symmetric quasimorphism on $H$ can be extended to a quasimorphism on $G$.

 A quasimorphism $\varphi$ on $G$ is called \emph{homogeneous} if for all $g\in G$ and all $n\in \mathbb Z$, $\varphi(g^n)=n\varphi(g)$. In particular, all homogeneous quasimorphisms are anti-symmetric. We denote the subspace of homogeneous quasimorphisms by $Q(G)$. The connection between quasimorphisms and stable commutator length is provided by the Bavard Duality Theorem \cite{B}.

\begin{thm}[Bavard Duality Theorem]
For any $g\in[G,G]$, there is an equality
\begin{equation}\label{bd}
scl_G(g)=\sup_{\varphi\in Q(G)}\frac{\varphi(g)}{2D(\varphi)}.
\end{equation}
Where the supremum is taken over all homogeneous quasimorphisms of non-zero defect.
\end{thm}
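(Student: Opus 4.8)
This is the classical Bavard Duality Theorem, so the plan is to carry out its standard proof via the Hahn--Banach theorem (cf.\ \cite{B}, \cite{DC}); there is no novelty to be expected here, only the two halves of a duality.

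The inequality $scl_G(g)\ge\sup_{\varphi}\varphi(g)/2D(\varphi)$ is the elementary half. The key observation is that a homogeneous quasimorphism $\varphi$ is conjugation invariant: from $\varphi((hgh^{-1})^n)=\varphi(hg^nh^{-1})$, two applications of the quasimorphism inequality, and $\varphi(h^{-1})=-\varphi(h)$, one obtains $|n\varphi(hgh^{-1})-n\varphi(g)|\le 2D(\varphi)$ for all $n$, hence $\varphi(hgh^{-1})=\varphi(g)$. Consequently $|\varphi([a,b])|=|\varphi(aba^{-1}b^{-1})-\varphi(aba^{-1})-\varphi(b^{-1})|\le D(\varphi)$. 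If $g^n$ is a product of $k=cl_G(g^n)$ commutators, iterating the quasimorphism inequality then gives $|\varphi(g^n)|\le(2k-1)D(\varphi)$; dividing by $n$, using $\varphi(g^n)=n\varphi(g)$, and letting $n\to\infty$ yields $|\varphi(g)|\le 2\,scl_G(g)\,D(\varphi)$, and taking the supremum over $\varphi$ (and $-\varphi$) gives the claim.

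For the reverse inequality --- the substantive half --- I would pass to chains. Let $C_1(G;\mathbb R)$ be the real vector space on $G$ (it equals its own space of group $1$-cycles), with $H_1(G;\mathbb R)=G^{\mathrm{ab}}\otimes\mathbb R$ the quotient by the span of the boundaries $\partial_2(a,b)=a+b-ab$. Imposing the homogenization relations $g^n\sim ng$ and $hgh^{-1}\sim g$ and restricting to rationally null-homologous chains yields a space $B_1^H(G)$ on which $scl_G$ extends to a seminorm, restricting to $r\mapsto|r|\,scl_G(g)$ on the line through $g$; this step rests on the identities $scl_G(g^n)=n\,scl_G(g)$ and subadditivity of $scl$ on chains, which come from gluing commutator expressions (equivalently, from the surface interpretation of $scl$). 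The crux is then the duality: the topological dual of $(B_1^H(G),scl_G)$ is identified with $Q(G)/H^1(G;\mathbb R)$ equipped with $\frac12 D$, where $\varphi$ acts on $c=\sum r_ig_i$ by $\sum r_i\varphi(g_i)$ --- the chain version of the previous paragraph shows $|\varphi(c)|\le 2D(\varphi)\,scl_G(c)$, while conversely a functional on $B_1^H(G)$ bounded in the $scl$-norm, extended to $C_1$ by arbitrary values on a complement and pulled back to $G$, is a homogeneous quasimorphism (homogeneity is built into the relations; the quasimorphism inequality follows from boundedness on the chains $g_1+g_2-g_1g_2\in\mathrm{im}\,\partial_2$, which have bounded $scl$). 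Applying Hahn--Banach to the norm-one functional $g\mapsto scl_G(g)$ on $\mathbb R\cdot g\subseteq B_1^H(G)$ then produces a homogeneous quasimorphism $\varphi$ realizing $\varphi(g)/2D(\varphi)$ arbitrarily close to $scl_G(g)$.

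The main obstacle lies entirely in this hard direction, and concretely in the two facts flagged above: that $scl_G$ descends to a genuine seminorm on the chain space $B_1^H(G)$ (not merely a function of conjugacy classes), and the identification of its dual with homogeneous quasimorphisms --- once these are in place the Hahn--Banach extraction is formal.
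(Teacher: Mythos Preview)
The paper does not prove this theorem: it is stated as the classical result of Bavard, with a citation to \cite{B}, and used as a black box in the proof of Corollary~\ref{scl}. There is therefore no ``paper's own proof'' to compare against. Your sketch is the standard argument (as in \cite{B} and \cite[Chapter~2]{DC}) and is correct in outline; the only place where real care is needed is exactly where you flag it, namely verifying that $scl$ extends to a seminorm on $B_1^H(G)$ and identifying the dual, and these are established in the references.
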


In fact, it is not hard to see that this supremum is always realized by some quasimorphism.

Given any quasimorphism $\varphi$, there is a standard way to obtain a homogeneous quasimorphism $\psi$, called the homogenization of $\varphi$. This is done by defining
\[
\psi(g)=\lim_{n\rightarrow \infty}\frac{\varphi(g^n)}{n}.
\]

\begin{lem}[{\cite[Corollary 2.59]{DC}}]\label{homo}
Let $\varphi\in \widehat Q(G)$ with homogenization $\psi$. Then $D(\psi)\leq 2D(\varphi)$.
\end{lem}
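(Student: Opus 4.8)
The plan is to first check that the homogenization $\psi$ is well defined, then extract homogeneity, and only afterwards estimate the defect. For the first step I would prove by induction on $n$ that $|\varphi(g^n)-n\varphi(g)|\le (n-1)D(\varphi)$ for every $g\in G$ and every $n\ge 1$: the inductive step is just the quasimorphism inequality applied to the pair $(g,g^n)$. This estimate shows that $\varphi(g^n)/n$ is Cauchy (compare the $n$-th and $nm$-th terms using $|\varphi(g^{nm})-m\varphi(g^n)|\le (m-1)D(\varphi)$), so the limit defining $\psi(g)$ exists, and dividing the estimate by $n$ and letting $n\to\infty$ gives the uniform bound $|\psi(g)-\varphi(g)|\le D(\varphi)$, which will be the workhorse below. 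Homogeneity $\psi(g^k)=k\psi(g)$ for $k\ge 1$ is immediate by passing to the subsequence $\varphi(g^{km})/m$; the case $k=0$ is trivial since $|\varphi(1)|\le D(\varphi)$; and $\psi(g^{-1})=-\psi(g)$ follows from $|\varphi(g^n)+\varphi(g^{-n})-\varphi(1)|\le D(\varphi)$ after dividing by $n$. In particular $\psi$ is anti-symmetric, and it is invariant under conjugation, since for $h\in G$ the quantity $|\varphi(hg^nh^{-1})-\varphi(g^n)|$ is bounded independently of $n$, whence $\psi(hgh^{-1})=\psi(g)$.

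To bound $D(\psi)$ I would fix $f,g\in G$ and use homogeneity to write, for every $n\ge 1$,
\[
n\bigl(\psi(fg)-\psi(f)-\psi(g)\bigr)=\psi\bigl((fg)^n\bigr)-\psi(f^n)-\psi(g^n).
\]
Replacing each of the three $\psi$-values by the corresponding $\varphi$-value costs at most $3D(\varphi)$ by the uniform bound, so
\[
n\,\bigl|\psi(fg)-\psi(f)-\psi(g)\bigr|\le \bigl|\varphi\bigl((fg)^n\bigr)-\varphi(f^n)-\varphi(g^n)\bigr|+3D(\varphi).
\]
It therefore suffices to show that $|\varphi((fg)^n)-\varphi(f^n)-\varphi(g^n)|$ grows at most like $2D(\varphi)\,n+O(1)$, and then divide by $n$ and let $n\to\infty$. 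To estimate this quantity I would decompose $(fg)^n=f\,(gf)^{n-1}\,g$, peel off the first and last syllable with the quasimorphism inequality, replace $\varphi((gf)^{n-1})$ by $(n-1)\psi(gf)=(n-1)\psi(fg)$ using conjugation-invariance of $\psi$ together with the uniform bound, and reassemble — the accounting being arranged so that the power estimate is applied economically, and balanced against its use on $f^n$ and $g^n$, so as to land on the sharp leading coefficient $2D(\varphi)$.

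The main obstacle is exactly this sharp constant. A careless chain of triangle inequalities — converting $\varphi((fg)^n)$ first to $n\varphi(fg)$ and then to $n\varphi(f)+n\varphi(g)$, and separately converting $\varphi(f^n),\varphi(g^n)$ to $n\varphi(f),n\varphi(g)$ — loses an extra factor and only yields a constant larger than $2D(\varphi)$, so one has to be careful about which quantities are compared directly and to exploit homogeneity and conjugation-invariance of $\psi$ rather than repeatedly re-expanding powers via $\varphi$. Once the estimate $|\varphi((fg)^n)-\varphi(f^n)-\varphi(g^n)|\le 2D(\varphi)\,n+O(1)$ is established, letting $n\to\infty$ in the displayed inequality gives $|\psi(fg)-\psi(f)-\psi(g)|\le 2D(\varphi)$, and since $f,g$ were arbitrary this is precisely the assertion $D(\psi)\le 2D(\varphi)$.
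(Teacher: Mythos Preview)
The paper does not prove this lemma --- it is quoted from Calegari's monograph without argument --- so the question is simply whether your proof stands on its own.

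Your preliminary steps are fine: the uniform bound $|\psi-\varphi|\le D(\varphi)$, homogeneity, and conjugation-invariance of $\psi$ are all correct and standard. The gap is precisely where you say the ``main obstacle'' lies. The decomposition $(fg)^n=f(gf)^{n-1}g$ is circular: carrying out your plan (peel off $f$ and $g$ at cost $2D$; replace $\varphi((gf)^{n-1})$ by $(n-1)\psi(fg)$ at cost $D$; convert $\varphi(f^n),\varphi(g^n),\varphi(f),\varphi(g)$ to $\psi$-values at cost $D$ each) one finds
\[
\bigl|\varphi((fg)^n)-\varphi(f^n)-\varphi(g^n)\bigr|\;\le\;(n-1)\,\bigl|\psi(fg)-\psi(f)-\psi(g)\bigr|+7D(\varphi),
\]
and feeding this back into your displayed inequality gives $n|\Delta|\le(n-1)|\Delta|+10D(\varphi)$, hence only $|\Delta|\le 10D(\varphi)$. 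No reshuffling of this bookkeeping beats the trivial bound $4D(\varphi)$ that already follows from $|\psi-\varphi|\le D(\varphi)$ plus a single quasimorphism inequality. Your phrase ``the accounting being arranged \ldots so as to land on the sharp leading coefficient $2D(\varphi)$'' is exactly the place where an idea is missing, not supplied.

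What is actually needed is a different identity that lets conjugation-invariance of $\psi$ do linear-in-$n$ work. One that suffices:
\[
(fg)^n=\Bigl(\prod_{k=0}^{n-1}g^{k}fg^{-k}\Bigr)\,g^{n},
\]
a product of $n+1$ factors. The quasimorphism inequality gives
\[
\Bigl|\varphi((fg)^n)-\sum_{k=0}^{n-1}\varphi(g^{k}fg^{-k})-\varphi(g^{n})\Bigr|\le nD(\varphi),
\]
and since $\psi(g^{k}fg^{-k})=\psi(f)$ one has $|\varphi(g^{k}fg^{-k})-\psi(f)|\le D(\varphi)$ for every $k$, so the sum equals $n\psi(f)$ up to error $nD(\varphi)$. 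Combining with $|\varphi((fg)^n)-n\psi(fg)|\le D(\varphi)$ and $|\varphi(g^{n})-n\psi(g)|\le D(\varphi)$ yields $n|\Delta|\le 2nD(\varphi)+2D(\varphi)$, and letting $n\to\infty$ gives $D(\psi)\le 2D(\varphi)$. The point is that this identity writes $(fg)^n$ as $n$ conjugates of $f$ times a \emph{single} power of $g$, so that conjugation-invariance is invoked $n$ times while the power $g^n$ is never re-expanded.
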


Our plan for proving Corollary \ref{scl} will be to take an element $h\in H$ and apply Bavard Duality to find a homogeneous quasimorphism which which realizes (\ref{bd}) with respect to $scl_H$. Then we can use Theorem \ref{ind} to extend this to a quasimorphism on all of $G$, then apply Bavard Duality again to find a lower bound on $scl_G(h)$. In order to do this we will need to understand the defect of the extended quasimorphism.

Let $H$ be a group, and let $\xi\colon H \to H/[H,H]\otimes \mathbb Q$ be the natural map. A subset $Y\subseteq H$ will be called \emph{nice} if $Y$ can be decomposed as $Y=Y_1\cup Y_2$ such that $\xi(Y_1)$ is linearly independent and $\xi|_{Y_2}\equiv 0$.

\begin{lem}\label{nice}
Every finitely generated subgroup of $H$ has a nice finite generating set.
\end{lem}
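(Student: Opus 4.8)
\emph{Plan.} Let $L\le H$ be a finitely generated subgroup. The strategy is to lift a $\mathbb Z$-basis of the free abelian group $\xi(L)$ back into $L$ and then use these lifts to absorb the ``abelian part'' of an arbitrary finite generating set of $L$. Concretely: $\xi(L)$ is a finitely generated torsion-free abelian group, hence $\xi(L)\cong\mathbb Z^d$ for some $d\ge 0$; I would fix a $\mathbb Z$-basis $e_1,\dots,e_d$ of $\xi(L)$, choose $y_1,\dots,y_d\in L$ with $\xi(y_j)=e_j$ (possible since $\xi|_L\colon L\to\xi(L)$ is onto), and put $A=\langle y_1,\dots,y_d\rangle\le L$ and $N=\ker(\xi|_L)$.

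Next I would record two elementary facts. First, $e_1,\dots,e_d$ are $\mathbb Q$-linearly independent in $H/[H,H]\otimes\mathbb Q$: a nontrivial $\mathbb Q$-linear dependence could be cleared of denominators to give a nontrivial $\mathbb Z$-linear dependence among a $\mathbb Z$-basis of $\xi(L)$, which is impossible. Second, for every $g\in L$ there is an $a\in A$ with $\xi(a)=\xi(g)$: writing $\xi(g)=\sum_j m_j e_j$ with $m_j\in\mathbb Z$, the element $a=y_1^{m_1}\cdots y_d^{m_d}$ works, since $\xi(a)=\sum_j m_j\xi(y_j)=\xi(g)$. Now take any finite generating set $g_1,\dots,g_m$ of $L$, use the second fact to write $g_i=n_i a_i$ with $a_i\in A$ and $n_i:=g_i a_i^{-1}\in N$, and set $Y_1=\{y_1,\dots,y_d\}$ and $Y_2=\{n_1,\dots,n_m\}$. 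Since each $a_i\in\langle y_1,\dots,y_d\rangle$, every $g_i$ lies in $\langle n_i,y_1,\dots,y_d\rangle$, so the finite set $Y=Y_1\cup Y_2$ generates $L$; by the first fact $\xi(Y_1)$ is linearly independent, and $Y_2\subseteq N$ gives $\xi|_{Y_2}\equiv 0$, so $Y$ is nice. (When $d=0$ one has $N=L$ and may simply take $Y_1=\varnothing$ and $Y_2$ any finite generating set.)

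The only point needing any care is the decomposition $g_i=n_i a_i$. Here $A$ need \emph{not} be a complement of $N$ in $L$ — in general the extension $N\to L\to\xi(L)$ does not split (e.g.\ for $L$ the integral Heisenberg group) — but this is irrelevant: all that is used is that $A$ surjects onto $\xi(L)$, which lets one peel the $N$-part off each generator individually. In particular one never needs $N$ to be finitely generated, which is precisely what sidesteps the usual obstruction that the kernel of a homomorphism from a finitely generated group can fail to be finitely generated.
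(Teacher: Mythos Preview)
Your proof is correct and follows essentially the same approach as the paper: lift a $\mathbb Z$-basis of the free abelian group $\xi(L)$ to elements $y_1,\dots,y_d\in L$, then multiply each generator of $L$ by a suitable word in the $y_j$ to kill its image under $\xi$. The paper's argument is the same up to notation, though your write-up adds the helpful remarks that $\mathbb Q$-linear independence of $\xi(Y_1)$ follows from $\mathbb Z$-linear independence and that no splitting of $L\to\xi(L)$ is required.
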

\begin{proof}
Let $H^\prime$ be a finitely generated subgroup of $H$, and let $X$ be a finite generating set of $H^\prime$. Then $\xi(H^\prime)$ is a finitely generated subgroup of a torsion-free abelian group, and hence $\xi(H^\prime)$ is a finitely generated free abelian group. Let $\{v_1,...,v_n\}$ be a basis for $\xi(H^\prime)$ as a free abelian group and let $y_i\in H^\prime$ be such that $\xi(y_i)=v_i$. Then for each $x\in X$, there exist integers $a_{x, 1},...a_{x,n}$ such that $\xi(x)=\sum\limits_{i=1}^n a_{x,i}v_i$. Let $\hat{x}=xy_1^{-a_{x,1}}... y_n^{-a_{x,n}}$. Now let $Y_1=\{y_1,...y_n\}$, and let $Y_2=\{\hat{x} \;|\; x\in X\}$. Then clearly $Y=Y_1\cup Y_2$ is nice, and $\langle Y\rangle=\langle X\rangle=H^\prime$.

\end{proof}

\begin{lem}[{\cite[Theorem 16.1]{F}}]\label{abel}
Let $B$ be a subgroup of an abelian group $A$, and let $D$ be a divisible abelian group. Then every homomorphism from $B\to D$ can be extended to a homomorphism from $A\to D$.
\end{lem}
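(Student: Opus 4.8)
The statement is the classical fact that divisible abelian groups are injective objects in the category of abelian groups (Baer's criterion, abelian form), so the plan is the standard Zorn's lemma argument. First I would consider the set $\mathcal P$ of all pairs $(C,\psi)$ with $B\le C\le A$ and $\psi\colon C\to D$ a homomorphism extending the given $\phi\colon B\to D$, partially ordered by $(C_1,\psi_1)\le (C_2,\psi_2)$ iff $C_1\le C_2$ and $\psi_2|_{C_1}=\psi_1$. This poset is nonempty since $(B,\phi)\in\mathcal P$, and every chain has an upper bound: take the union of the subgroups together with the (evidently well-defined) union of the maps. By Zorn's lemma $\mathcal P$ has a maximal element $(C_0,\psi_0)$, and it remains only to show $C_0=A$.

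Suppose not, and pick $a\in A\setminus C_0$; I would extend $\psi_0$ to $C_1:=C_0+\mathbb Z a$, contradicting maximality. There are two cases. If $\mathbb Z a\cap C_0=0$, then every element of $C_1$ is uniquely of the form $c+na$ with $c\in C_0$ and $n\in\mathbb Z$, and setting $\psi_1(c+na)=\psi_0(c)$ (i.e. $\psi_1(a)=0$) defines a homomorphism extending $\psi_0$. Otherwise, let $n$ be the least positive integer with $na\in C_0$; then $\psi_0(na)\in D$, and since $D$ is divisible I may choose $d\in D$ with $nd=\psi_0(na)$, and define $\psi_1(c+ka)=\psi_0(c)+kd$ for $c\in C_0$, $k\in\mathbb Z$.

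The one point that genuinely needs checking — the only "obstacle", and a mild one — is well-definedness of $\psi_1$ in the second case: if $c+ka=c'+k'a$ with $c,c'\in C_0$, then $(k-k')a=c'-c\in C_0$, so $n\mid k-k'$, say $k-k'=mn$; hence $\psi_0(c'-c)=\psi_0((k-k')a)=m\psi_0(na)=mnd=(k-k')d$, which rearranges to $\psi_0(c)+kd=\psi_0(c')+k'd$. Once well-definedness holds, additivity of $\psi_1$ follows at once from that of $\psi_0$, and $\psi_1$ extends $\psi_0$ (hence $\phi$) while being defined on $C_1\supsetneq C_0$. This contradicts maximality of $(C_0,\psi_0)$, so $C_0=A$ and $\psi_0$ is the desired extension.
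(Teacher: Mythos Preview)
Your argument is the standard Zorn's lemma proof that divisible abelian groups are injective in the category of abelian groups, and it is correct as written; the well-definedness check in the second case is exactly the point that needs care and you have handled it properly. Note, however, that the paper does not actually give its own proof of this lemma: it merely quotes the statement from Fuchs's book \cite[Theorem~16.1]{F}, so there is no ``paper's proof'' to compare against --- your proof is essentially the classical one found in that reference.
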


The reason we are interested in nice subsets is the following lemma.
\begin{lem}\label{mq}
For any group $H$, any nice finite subset $Y\subseteq H$, and any $\varphi\in Q(H)$, there exists $\varphi^\prime\in Q(H)$ such that $\varphi^{\prime} |_{[H, H]}\equiv \varphi |_{[H, H]}$, $D(\varphi^{\prime})=D(\varphi)$, and for all $y\in Y$,
\[
|\varphi^{\prime}(y)|\leq 2D(\varphi^{\prime})scl_H(y).
\]
\end{lem}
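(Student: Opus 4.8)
The plan is to modify $\varphi$ by subtracting a suitable homomorphism. The key point is that a homomorphism $H \to \mathbb{R}$ is itself a homogeneous quasimorphism of defect $0$, so adding one to $\varphi$ changes neither the defect nor the restriction to $[H,H]$ (homomorphisms vanish on $[H,H]$). Thus for any homomorphism $h \colon H \to \mathbb{R}$, the quasimorphism $\varphi' = \varphi + h$ satisfies $\varphi'|_{[H,H]} \equiv \varphi|_{[H,H]}$ and $D(\varphi') = D(\varphi)$ automatically. So the entire content of the lemma is to choose $h$ so that the inequality $|\varphi'(y)| \le 2D(\varphi')\,scl_H(y)$ holds for each $y \in Y$.

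First I would split $Y = Y_1 \cup Y_2$ as in the definition of nice, where $\xi(Y_1)$ is linearly independent in $H/[H,H]\otimes\mathbb Q$ and $\xi|_{Y_2}\equiv 0$. For $y \in Y_2$, some positive power $y^n$ lies in $[H,H]$, so $scl_H(y) = scl_H(y^n)/n$ is defined; by Bavard duality (applied with $\varphi'$ in the role of the supremand, or rather noting $\varphi'$ is \emph{a} homogeneous quasimorphism so it gives one term of the supremum), $|\varphi'(y)| = |\varphi'(y^n)|/n \le 2D(\varphi')\,scl_H(y^n)/n = 2D(\varphi')\,scl_H(y)$. Crucially this holds for \emph{any} homogeneous quasimorphism agreeing with $\varphi$ on $[H,H]$, in particular for every choice of $h$, since $\varphi'(y^n)$ with $y^n \in [H,H]$ is determined by $\varphi'|_{[H,H]} = \varphi|_{[H,H]}$. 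So the elements of $Y_2$ impose no constraint on $h$ at all.

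It remains to handle $Y_1 = \{y_1, \dots, y_k\}$. Since $\xi(y_1), \dots, \xi(y_k)$ are linearly independent in $H/[H,H]\otimes\mathbb Q$, they span a subgroup $B$ of the abelian group $A = H/[H,H]$ (modulo torsion, i.e. inside $A \otimes \mathbb Q$ or after passing to $A/\mathrm{tors}$) on which one can freely prescribe values. Define a partial homomorphism $B \to \mathbb{R}$ by $\xi(y_i) \mapsto -\varphi(y_i)$; linear independence guarantees this is well-defined on $B$. By Lemma~\ref{abel} (applied with the divisible group $D = \mathbb{R}$), this extends to a homomorphism $A \to \mathbb{R}$, and precomposing with $\xi$ gives a homomorphism $h \colon H \to \mathbb{R}$ with $h(y_i) = -\varphi(y_i)$ for all $i$. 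Then $\varphi'(y_i) = \varphi(y_i) + h(y_i) = 0$, so the required inequality holds trivially (the right-hand side is nonnegative) for every $y \in Y_1$. Combining the two cases, $\varphi' = \varphi + h$ has all the asserted properties.

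The only mild subtlety — and the step I would be most careful about — is the passage between $H/[H,H]$, its torsion-free quotient, and $H/[H,H]\otimes\mathbb Q$ when invoking Lemma~\ref{abel}: one must check that "linearly independent in $H/[H,H]\otimes\mathbb Q$" really does let us prescribe arbitrary real values on the $\xi(y_i)$ and then extend, which is clear since $\mathbb Q$-linear independence means the $\xi(y_i)$ generate a free abelian subgroup of $A/\mathrm{tors}$, torsion maps to $0$ under any homomorphism to $\mathbb R$, and $\mathbb R$ is divisible hence injective as a $\mathbb Z$-module. Everything else is immediate from the remarks above.
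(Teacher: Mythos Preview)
Your proposal is correct and follows essentially the same approach as the paper: split $Y=Y_1\cup Y_2$, observe that Bavard duality automatically gives the inequality for $Y_2$, and subtract a homomorphism built via Lemma~\ref{abel} to kill $\varphi$ on $Y_1$. If anything you are slightly more careful than the paper in explaining why the $Y_2$ bound holds for $\varphi'$ (not just $\varphi$) and in handling the torsion in $H/[H,H]$.
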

\begin{proof}
Let $Y=Y_1\cup Y_2$ be the decomposition given by the definition of a nice subset. If $y\in Y_2$, then there exists some $n$ such that $y^n\in [H, H]$. Then for any $\varphi\in Q(H)$, Bavard Duality gives
\begin{equation}\label{Kbound}
|\varphi(y)|\leq 2D(\varphi)scl_H(y).
\end{equation}
Now, let $A=H/[H,H]$, and let $B$ be the image of $\langle Y\rangle$ inside $A$. Let $\theta$ be the quotient map $\theta\colon H\to A$. Then by definition of nice subsets we can define a homomorphism $\alpha:B\to\mathbb R$ such that $\alpha(\theta(y))=\varphi(y)$ for all $y\in Y_1$. Since $\mathbb R$ is divisible, Lemma \ref{abel} allows us to extend $\alpha$ to all of $A$. Composing $\alpha$ with $\theta$ gives a homomorphism $\beta:H\to\mathbb R$ which satisfies $\beta(y)=\varphi(y)$ for all $y\in Y_1$. Now we set $\varphi^{\prime}=\varphi-\beta$. Since $\beta$ vanishes on $[H,H]$, $\varphi^{\prime} |_{[H, H]}\equiv \varphi |_{[H, H]}$. Since $\varphi^{\prime}$ is a shift of $\varphi$ by a homomorphism, $D(\varphi^{\prime})=D(\varphi)$. Furthermore, combining the fact that $\varphi^{\prime}(y)=0$ for all $y\in Y_1$ with (\ref{Kbound}), we get that for all $y\in Y$,
\[
|\varphi(y)|\leq 2D(\varphi^\prime)scl_H(y).
\]
\end{proof}

We are now ready to prove Corollary \ref{scl}.

\begin{proof}[Proof of Corollary \ref{scl}]

Let $H\h (G, X)$, and by Lemma \ref{relmet} there exists $Y^{\prime}$ a finite subset of $H$ such that the relative metric $\widehat d$ on $H$ is bi-Lipschitz equivalent to the word metric with respect to $Y^{\prime}$. By Lemma \ref{nice} the subgroup $\langle Y^{\prime}\rangle $ has a nice finite generating set $Y$. Let $d_Y$ be the word metric with respect to $Y$. Then $d_Y$ is bi-Lipschitz equivalent to the relative metric $\widehat d$ on $H$, so there exists a constant $L$ such that for all $f, g\in H$,
\begin{equation}\label{L}
d_Y(f, g)\leq L\widehat{d}(f, g).
\end{equation}

Fix some $h\in [H, H]$, and let $\varphi\in Q(H)$ be the quasimorphism which realizes the Bavard Duality; that is, $scl_H(h)=\frac{\varphi(h)}{2D(\varphi)}$.
Let $\varphi^{\prime}$ be the modified quasimorphism provided by Lemma \ref{mq}.

Let $\iota:Q(H)\to\widehat Q(G)$ be map provided by Theorem \ref{real}. Then by part $(b)$ of Theorem \ref{real} we have
\[
D(\iota(\varphi^\prime))\leq 54K+66D(\varphi^\prime)
\]
where $K$ is defined by $K=\max\{ |\varphi^{\prime}(k)| : \widehat{d} (1,k)<15C\}$. However, by (\ref{L}) we get $K\leq \max\{ |\varphi^{\prime}(k)| : d_Y (1,k)<15CL\}$. Inductively applying the definition of a quasimorphism along with Lemma \ref{mq}, for any such $k$ we get
\[
|\varphi^{\prime}(k)|\leq 15CL(D(\varphi^{\prime})+2D(\varphi^{\prime})\max_{y\in Y}\{scl_H(y)\}).
\]

That is, we have bound $K$  as a constant multiple of $D(\varphi^{\prime})$. Thus there exists a constant $M$ (which is independet of $\varphi^{\prime}$) such that

\begin{equation}\label{B}
D(\iota(\varphi^{\prime}))\leq 54K+66D(\varphi^{\prime})\leq MD(\varphi^{\prime}).
\end{equation}

 Now, $\iota(\varphi^{\prime})$ is a quasimorphism on $G$, and in order to apply Bavard Duality we homogenize $\iota(\varphi^{\prime})$ to get a quasimorphism $\psi$, satisfying $D(\psi)\leq 2D(\iota(\varphi^{\prime}))$. Then applying the definition of $\psi$, along with the homgeneity of $\varphi^{\prime}$ and the conditions of Theorem \ref{ind} gives
\[
\psi(h)=\lim_{n\rightarrow\infty}\frac{\iota(\varphi^{\prime})(h^n)}{n}=\varphi^{\prime}(h)=\varphi(h).
\]
 Also, (\ref{B}) and Lemma \ref{homo} show that $D(\psi)\leq 2D(\iota(\varphi^{\prime}))\leq 2MD(\varphi^{\prime})=2MD(\varphi)$. Applying Bavard Duality again gives
\[
scl_G(h)\geq \frac{\psi(h)}{2D(\psi)}\geq \frac{\varphi(h)}{4MD(\varphi)}=\frac{1}{2M}scl_H(h).
\]

\end{proof}

\begin{proof}[Proof of Corollary \ref{malnorm}]
If $H$ is an almost malnormal quasi-convex subgroup of a hyperbolic group, then $G$ is hyperbolic relative to $H$ \cite{Bow}. Hence $H$ is \he in $G$ by \cite[Proposition 2.4]{DGO} and the claim follows from Corollary \ref{scl}.
\end{proof}

\begin{rem}\label{free-dist}
Note that the malnormality condition can not be dropped in Corollary \ref{malnorm} even for free groups. For example, let $F=F(x,y,t)$ be the free group of rank $3$ with basis $\{x,y,t\}$. In what follows we write $a^b$ for $b^{-1}ab$ and $[a,b]$ for $a^{-1}b^{-1}ab$. Let $H=\langle x,y, x^t, y^t\rangle $ and let $$h_k= [x,y]^{-k}[x^t,y^t]^k.$$

Since the subset $\{ x,y, x^t, y^t\} \subseteq G$ is Nielsen reduced, the subgroup $H$ is freely generated by $x,y, x^t, y^t$. Therefore $scl_H(h_k)= k+1/2$ (see \cite[Example 2.100]{DC}). On the other hand, we have
$$
scl_G(h_k) = scl_G([x,y]^{-k} ([x,y]^k)^t)= scl_G ([[x,y]^k,t])\le 1.
$$
Thus $scl_H(h_k)/scl_G(h_k)\to \infty $ as $k\to \infty $.
\end{rem}

\subsection*{Acknowledgments.} We are grateful to Francois Dahmani and Ionut Chifan, with whom we discussed this project at various stages.

\vspace{1cm}

\noindent {\bf M. Hull: } Department of Mathematics, Vanderbilt University, Nashville TN 37240, USA.\\
E-mail: {\it michael.b.hull@vanderbilt.edu}

\medskip

\noindent {\bf D. Osin: } Department of Mathematics, Vanderbilt University, Nashville TN 37240, USA.\\
E-mail: {\it denis.osin@gmail.com}

\end{document}